\title{\bf Wasserstein Convergence for Empirical Measures of Subordinated Fractional Brownian Motions on the Flat Torus}
\author{Huaiqian Li\footnote{Email: {\color{blue}huaiqianlee@gmail.com} 
}
\quad Bingyao Wu\footnote{Email: {\color{blue}bingyaowu@163.com}}
  \vspace{2mm}
\\
{\footnotesize Center for Applied Mathematics, Tianjin University, Tianjin 300072, China}
}
\date{}
\newtheorem{thm}{Theorem}[section]
\newtheorem{cor}[thm]{Corollary}
\newtheorem{lem}[thm]{Lemma}
\newtheorem{prp}[thm]{Proposition}
\newtheorem{rem}[thm]{Remark}
\theoremstyle{definition}
\newcommand{\scr}[1]{\mathscr #1}
\definecolor{wco}{rgb}{0.5,0.2,0.3}
\numberwithin{equation}{section} \theoremstyle{remark}
\newcommand{\bes}{\begin{equation}\begin{split}}
\newcommand{\ees}{\end{split}\end{equation}}
\def\R{\mathbb R}  \def\ff{\frac}  
\def\p<{\preceq}
\def\N{\mathbb N}  
\def\dd{\delta}  \def\vv{\varepsilon} 
\def\<{\langle} \def\>{\rangle}  
  \def\nn{\nabla}  \def\E{\mathbb E}
\def\d{\text{\rm{d}}}  \def\aa{\alpha} 
  \def\si{\sigma} 
 \def\beq{\begin{equation}}  
\def\e{e}    
 \def\tt{\theta}
 \def\P{\mathbb P} 
\def\C{\scr C}           
\def\Z{\mathbb Z}  \def\ll{\lambda}
\def\i{{\rm in}}  
  \def\i{{\rm i}} 
\def\to{\rightarrow}
\def\W{\mathbb W}
 \def\i{{\rm i}}
\def\T{\mathbb T}
\def\vv{\varepsilon}
\theoremstyle{definition}
\begin{document}
\allowdisplaybreaks
\maketitle
\makeatletter 
\renewcommand\theequation{\thesection.\arabic{equation}}
\@addtoreset{equation}{section}
\makeatother 

\begin{abstract}
We estimate rates of convergence for empirical measures associated with the subordinated fractional Brownian motion to the uniform distribution on the flat torus under the Wasserstein distance $\W_p$ for all $p\geq1$. In particular, our results coincides with recent ones on the diffusion process and the fractional Brownian motion. As an application, we provide similar results for time-discretized subordinated fractional Brownian motions.
\end{abstract}



\section{Introduction and main results}
Let $\T^d=\R^d/\Z^d$ be the $d$-dimensional flat torus endowed with the distance
$$\rho(x,y)=\min_{k\in\Z^d}|x-y-k|,\quad x,y\in\T^d,$$
where $|x|=\sqrt{\<x,x\>}$ denotes the Euclidean norm of $x\in\R^d$ and $\<\cdot,\cdot\>$ denotes the inner product on $\R^d$. Let $\scr{P}$ be the class of Borel probability measures on $\T^d$.
Given $\mu,\nu\in\scr{P}$, for any $p\in(0,\infty)$, the Wasserstein (or Kantorovich) distance of order $p$ between $\mu$ and $\nu$ induced by $\rho$  is defined as
$$\W_p(\mu,\nu)=\inf_{\pi\in\C(\mu,\nu)}\Big(\int_{\T^d\times \T^d}\rho(x,y)^p\,\pi(\d x,\d y)\Big)^{\min\{1,1/p\}},$$
where $\C(\mu,\nu)$ is the set of all probability measure on the product space $\T^d\times \T^d$ with marginal distributions $\mu$ and $\nu$, respectively. There are many literatures on the study of the Wasserstein distance, especially its connections with the optimal transport theory; see e.g. \cite[Chapter 5]{ChenMF2004} and \cite{Villani2008} for more details.

Let $\mathfrak{m}$ be the uniform distribution on the torus $\T^d$. The main purpose of this paper is to investigate the rate of convergence of Wasserstein distances between empirical measures associated with the subordinated fractional Brownian motion and $\mathfrak{m}$. In order to present our main results, we should introduce some basics on the subordinated fractional Brownian motion. For instance, one may refer to \cite{DS,LS} for further studies on subordinated fractional Brownian motions.

We begin with the definition of the fractional Brownian motion (abbr fBM). Let $H\in(0,1)$. We use $X^H:=(X_t^H)_{t\ge 0}$ to denote the fBM on $\R^d$ with Hurst index $H$, i.e., $X^H$ is a centered continuous-time Gaussian process with covariance matrix
$$\ff 1 2(s^{2H}+t^{2H}-|t-s|^{2H})\,{\rm I}_d,\quad s,t\ge 0,$$
where ${\rm I}_d$ is the $d\times d$ identity matrix. As we know, $X^H$ is neither a Markov process nor a semimartingale in general. However, in particular, if $H=1/2$, then $X^H$ is indeed the standard Brownian motion on $\R^d$. A fBM on the $d$-torus $\T^d$ is the natural projection of a fBM on $\R^d$. See e.g. \cite[Chapter 5]{Nualart} and \cite{N} for more properties on the fBM.

Now we recall some basics on the Bernstein function and the subordinator. A function $B$ from $C([0,\infty); [0,\infty))\cap C^\infty((0,\infty);[0,\infty))$ is called a Bernstein function if, for each $ k\in\mathbb N$,
 $$ (-1)^{k-1} \ff{\d^k}{\d \ll^k}B(\ll)\ge0, \quad \ll>0.$$
It is well known that every Bernstein function $B$  with $B(0)=0$ is characterized by the unique L\'{e}vy--Khintchine representation
\begin{equation}\label{LK}
B(\ll)=b\ll+\int_0^\infty(1-\e^{-\ll y})\,\nu(\d y),\quad \lambda\geq0,
\end{equation}
for some constant $b\ge 0$ and a L\'{e}vy measure $\nu$ on $(0,\infty)$ (i.e., a Radon measure on the Borel $\sigma$-algebra of $(0,\infty)$ such that $\int_0^\infty \frac{x}{1+x}\,\nu(\d x)<\infty$).
We need the following class of Bernstein functions (see e.g. \cite{WangWu}), i.e.,
$$ {\bf B}:= \big\{B:\ B\text{\ is\ a\ Bernstein\ function\ with } B(0)=0,\, B'(0)>0\big\}.$$
Let $B\in {\bf  B}$.  It is also well known that there exists a unique subordinator corresponding to $B$, denoted by $S^B=(S_t^B)_{t\geq0}$, i.e., an increasing stochastic process with stationary, independent increments, taking values in $[0, \infty)$ and $S_0^B=0$ such that $B$ is the Laplace exponent of $S^B$ given by
\begin{equation}\label{LT} \E \e^{-\ll S_t^B}= \e^{-t B(\ll)},\ \ t,\ll \ge 0.\end{equation}
For instance, the particular stable subordinator with index $\aa\in(0,1)$ is the process $S^B$ corresponding to $B(\ll)=\ll^\aa$, which has the representation \eqref{LK} with $b=0$ and
$$\nu(\d y)=\frac{-1}{\Gamma(-\alpha)}y^{-1-\alpha}\,\d y,$$
where $\Gamma(\cdot)$ stands for the Gamma function and $-\alpha\Gamma(-\alpha)=\Gamma(1-\alpha)$. We use $\P(S_t^B\in\cdot)$ to denote the distribution of $S_t^B$ in the sequel.  The following subclasses of $\mathbf{B}$ are also  needed; see e.g. \cite{LiWu}. For any $\aa\in[0,1]$, let
$$\textbf{B}^\aa=\Big\{B\in\textbf{B}:\ \liminf_{\ll\to\infty}\ll^{-\aa}B(\ll)>0\Big\},\quad
\textbf{B}_\aa=\Big\{B\in\textbf{B}:\ \limsup_{\ll\to\infty}\ll^{-\aa}B(\ll)<\infty\Big\}.$$
For every $\aa\in(0,1]$, the typical example  $B(\ll)=\ll^\aa$ belongs to the intersection of $\textbf{B}^\aa$ and $\textbf{B}_\aa$. For many other interesting examples, one may refer to the tables in \cite[Chapter 16]{SSV2012} for instance. Moreover, it is easy to verify that  there exists a constant $c>0$ such that, for any $B\in\textbf{B}^\aa$ (resp. $B\in\textbf{B}_\aa$) with  $\aa\in[0,1]$,
\begin{equation}\label{bound-B}
B(\ll)\ge c\min\{\ll^\aa, \ll\}~(\mbox{resp. }B(\ll)\le c\ll^\aa),\quad\ll\ge 0;
\end{equation}
see e.g. (4.11) and (5.8) in \cite{LiWu}.

The subordinated fractional Brownian motion (abbr sfBM) is defined by the time-change of the fBM as follows. Let $B\in\textbf{B}$, $H\in(0,1)$ and $S^B$ be the subordinator corresponding to $B$ such that $S^B$ and the fBM $X^H$ are independent. Set
$$X_t^{B,H}:=X_{S_t^B}^H,\quad t\ge 0.$$
The process $X^{B,H}:=(X_t^{B,H})_{t\ge 0}$ is called subordinated fractional Brownian motion with index $H$ corresponding to the Bernstein function $B$. In particular, when $H=\ff 1 2$, $X^B:=X^{B,H}$ is the subordinated Brownian motion (abbr sBM) corresponding to $B$, and when $B$ is the identity map, $X^{B,H}$ coincides with $X^H$. Refer to \cite{BSW,SSV2012,Bertoin97} for more details on the subordinated process and the Bernstein function.

We are concerned with empirical measures associated with the sfBM $X^{B,H}$, which are denoted as
$$\mu_t^{B,H}=\ff 1 t \int_0^t \dd_{X_s^{B,H}}\,\d s,\quad t>0.$$
In particular, when $H=1/2$, we write $\mu_t^B$ instead of $\mu_t^{B,1/2}$ for short.

To state our main results, further notations are needed. We write $a\lesssim b$ or $b\gtrsim a$ if there exists a positive constant $c$ such that $a\leq c b$, where $c$ may depending on the parameters $p,d$ and the Bernstein function $B$. We write $a\asymp b$ if $a\lesssim b$ and $b\lesssim a$ hold simultaneously.

Now we are ready to introduce our main results. First, for general $H\in (0,1)$, we present the following upper bound on the rate of convergence for empirical measures associated with the sfBM to $\mathfrak{m}$ under the Wasserstein distance.
\begin{thm}[Upper bound estimates]\label{TH2}
Assume that
\begin{itemize}
\item[\textnormal{(i)}] $H=1/2$ and $B\in\mathbf{B}^\aa$ for some $\aa\in[0,1]$,
\item[\textnormal{(ii)}] $H\neq 1/2$, and $B$ is given by \eqref{LK} satisfying that $\nu(\d y)\ge c y^{-1-\aa}\,\d y$ for some constants $c>0$ and $\aa\in(0,1)$.
\end{itemize}
Let $X^{B,H}$ be a $\T^d$-valued sfBM with index $H$ corresponding to $B$.  Then for any $p\geq1$ and any large enough $t>0$,
\begin{equation}\begin{split}\label{W1-upper}
\E\big[\W_p\big(\mu_t^{B,H},\mathfrak{m}\big)\big]\lesssim
\begin{cases}
t^{-1/2},\quad & d<2+ \aa/ H,\\
\big(\ff{\log t} t\big)^{1/ 2},\quad & d=2+\aa/ H,\\
t^{-\ff 1 {d-\aa/H}},\quad & d>2+\aa/ H.\\
\end{cases}
\end{split}\end{equation}
\end{thm}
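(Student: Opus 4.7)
The strategy is to combine a Fourier-analytic computation of the two-point correlations of the sfBM with a heat-kernel smoothing / Sobolev-duality framework, in the spirit of \cite{WangWu}. Denote $e_k(x):=\e^{2\pi\i\<k,x\>}$ and $\hat\mu_t(k):=\ff1t\int_0^t e_k(X_s^{B,H})\,\d s$ for $k\in\Z^d$. The plan is first to reduce a bound on $\E\W_p(\mu_t^{B,H},\mathfrak m)$, uniformly in $p\ge 1$, to a bound of the form
$$
\E\W_p(\mu_t^{B,H},\mathfrak m)\lesssim \ss\vv+\Big(\sum_{k\ne 0}\e^{-c\vv|k|^2}\ff{\E|\hat\mu_t(k)|^2}{|k|^2}\Big)^{1/2},\qquad \vv\in(0,1],
$$
obtained by convolving $\mu_t^{B,H}$ with the heat semigroup on $\T^d$ and applying a Peyre-type $\W_p$--Sobolev duality together with spectral Bernstein estimates on the smoothed measure; $\vv$ will eventually be optimized in terms of $t$.

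The first ingredient is a single-frequency correlation estimate. By Fubini and the independence of the subordinator $S^B$ from the fBM $X^H$, the stationarity of the Gaussian increments of $X^H$, and the stationary increments of $S^B$, one obtains for $0\le u\le s$
$$
\E\bigl[e_k(X_s^{B,H})\overline{e_k(X_u^{B,H})}\bigr]=\E\,\e^{-2\pi^2|k|^2(S_{s-u}^B)^{2H}}.
$$
Substituting into $|\hat\mu_t(k)|^2$ and changing variables yields the key reduction
$$
\E|\hat\mu_t(k)|^2\le \ff2t\int_0^t\E\,\e^{-2\pi^2|k|^2(S_r^B)^{2H}}\,\d r.
$$
Thus the whole problem reduces to estimating $I_k:=\int_0^\infty \E\,\e^{-c|k|^2(S_r^B)^{2H}}\,\d r$ as $|k|\to\infty$.

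The estimate of $I_k$ is where the two hypotheses separate. Under (i), one has $(S_r^B)^{2H}=S_r^B$ and the Laplace-transform identity $\E\,\e^{-\ll S_r^B}=\e^{-rB(\ll)}$ integrates to $I_k\le 1/B(2\pi^2|k|^2)$; by $B\in\mathbf{B}^\aa$ and \eqref{bound-B} this is $\lesssim|k|^{-2\aa}=|k|^{-\aa/H}$ for large $|k|$. Under (ii) the Laplace exponent of $(S_r^B)^{2H}$ is not directly available, but since $s\mapsto\e^{-\ll s^{2H}}$ is decreasing and $\nu(\d y)\ge cy^{-1-\aa}\,\d y$ forces $S^B$ to stochastically dominate a stable subordinator $S^\aa$ of index $\aa$, we get
$$
\E\,\e^{-\ll(S_r^B)^{2H}}\le \E\,\e^{-\ll(S_r^\aa)^{2H}}.
$$
The self-similarity $S_r^\aa\stackrel{d}{=}r^{1/\aa}S_1^\aa$, a change of variables $u=\ll r^{2H/\aa}(S_1^\aa)^{2H}$ in $r$, and the classical finiteness of the negative moment $\E(S_1^\aa)^{-\aa}$ then again yield $I_k\lesssim|k|^{-\aa/H}$. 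Altogether, in both cases,
$$
\E|\hat\mu_t(k)|^2\lesssim t^{-1}|k|^{-\aa/H},\qquad k\in\Z^d\setminus\{0\}.
$$

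Inserting this into the Fourier bound of the first paragraph, the problem collapses to analyzing
$$
\sum_{k\ne 0}\e^{-c\vv|k|^2}|k|^{-2-\aa/H},
$$
which, by an elementary Gaussian-sum / integral comparison, is $O(1)$ when $d<2+\aa/H$, $O(|\log\vv|)$ when $d=2+\aa/H$, and $O(\vv^{-(d-2-\aa/H)/2})$ when $d>2+\aa/H$. Balancing $\ss\vv$ against the resulting spectral term and choosing $\vv$ optimally in terms of $t$ produces, respectively, $t^{-1/2}$, $\ss{(\log t)/t}$, and $t^{-1/(d-\aa/H)}$, which is exactly \eqref{W1-upper}. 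The main technical obstacle is case (ii): because $(S_r^B)^{2H}$ is no longer a Lévy functional, the clean Laplace-transform identity that makes case (i) essentially algebraic is unavailable, and one is forced to route through the stochastic comparison with the stable subordinator and sharp negative-moment bounds for $S_1^\aa$. This is precisely why hypothesis (ii) must impose a pointwise lower bound on $\nu$ rather than the much weaker asymptotic condition $B\in\mathbf{B}^\aa$ used in case (i).
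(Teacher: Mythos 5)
The proposal is sound for $p\in[1,2]$, but it has a genuine gap for $p>2$, which is exactly the part of Theorem \ref{TH2} that goes beyond earlier work.

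Your claimed reduction
\begin{equation*}
\E\,\W_p(\mu_t^{B,H},\mathfrak m)\lesssim \sqrt{\vv}+\Big(\sum_{k\ne 0}\e^{-c\vv|k|^2}\ff{\E|\hat\mu_t(k)|^2}{|k|^2}\Big)^{1/2}
\end{equation*}
is not what the Peyre-type estimate on $\T^d$ gives. The smoothing/duality lemma used in the paper (Lemma~\ref{W}, inequality \eqref{WPU}) produces $\W_p^p(\mu,\mathfrak m)\lesssim\vv^{p/2}+\|\nn u_\vv\|_{L^p(\T^d)}^p$. After taking expectations this requires a bound on $\E[\|\nn u_\vv\|_{L^p}^p]$, and the $L^p$ norm of the gradient cannot be dominated by its $L^2$ norm on a probability space; worse, the Hausdorff--Young inequality in the frequency variable gives control via $\ell^q(\Z^d)$ with $q=p/(p-1)<2$, i.e.\ a \emph{larger} sequence norm than $\ell^2$. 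Attempting to salvage this with $|\hat\mu_t(k)|\le 1$ and a Minkowski argument controls $\E[|\hat\mu_t(k)|^p]$ by $\E[|\hat\mu_t(k)|^2]$, but the resulting optimization in $\vv$ gives $\vv\sim t^{-2/(d(p-1)-\aa/H)}$, which is strictly worse than the target $t^{-2/(d-\aa/H)}$ for $p>2$; a Bernstein-type inequality to pass from $L^2$ to $L^p$ loses a factor $\vv^{-d(1/2-1/p)/2}$ and is likewise too lossy. The actual mechanism in the paper is a genuine $p$-point correlation bound: Lemma~\ref{LTU} controls $|\E\prod_{j=1}^p\widehat{\mu_t^{B,H}}(\xi_j)|$ via the simplex decomposition of $[0,t]^p$, the independent/stationary increments of $S^B$ (and local non-determinism of the fBM for $H\ne1/2$), and the subordinator moment estimate of Lemma~\ref{SDU}; this feeds into the generalized Young convolution Lemma~\ref{G} and a careful case-by-case optimization over exponents to obtain $\E[\|\nn u_\vv\|_{L^p}^p]\lesssim\vv^{p/2}$ (Proposition~\ref{NUP}). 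Monotonicity of $p\mapsto\W_p$ then upgrades this from even $p$ to all $p\ge1$. None of this combinatorial structure is present in your proposal, so $p>2$ is unproved.

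On the part you do handle, the route through $\E|\hat\mu_t(k)|^2\lesssim t^{-1}|k|^{-\aa/H}$ is essentially the $p=2$ instance of Lemma~\ref{LTU} and Proposition~\ref{NUP}. Your treatment of case (ii) is a genuinely different, and arguably cleaner, derivation of the required tail bound: rather than running Dynkin's formula on $x\mapsto\e^{-\ll x^{2H}}$ as in Lemma~\ref{SDU}, you use the pointwise lower bound on $\nu$ to stochastically dominate $S^B$ by an $\alpha$-stable subordinator, then invoke self-similarity and finiteness of a negative moment of $S_1^\alpha$. For the two-point function this works and is quicker; it would also factor through to the $p$-point expression since increments of $S^B$ are independent, though you would still need the simplex decomposition, the local non-determinism bound for $H\ne1/2$, and Lemma~\ref{G}'s multilinear Young inequality, none of which your plan supplies.
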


\begin{rem}
Let $\alpha\in(0,1)$ and $B$ be a Bernstein function given by \eqref{LK}. If $\nu(\d y)\ge c y^{-1-\aa}\,\d y$ for some
constant $c>0$, then $B\in\mathbf{B}^\aa$. Indeed, on the one hand, by the dominated convergence theorem,
$$B'(\ll)=b+\int_0^\infty\e^{-\ll y}y\,\nu(\d y)\geq b+ c\int_0^\infty y^{-\aa}\e^{-\ll y}\,\d y,\quad\lambda>0,$$
which clearly implies that $B'(0)=\lim_{\lambda\rightarrow0^+}B'(\ll)>0$, and on the other hand,
$$\liminf_{\ll\rightarrow\infty}\frac{B(\ll)}{\ll^\aa}\geq \liminf_{\ll\rightarrow\infty}\Big[b\ll^{1-\aa}+c\int_0^\infty (1-\e^{-t})t^{-1-\aa}\,\d t \Big]>0.$$

However, the converse seems not true. Here is a counterexample. Letting $\alpha\in(0,1)$ and taking $\tilde{B}(\ll)=(1+\ll)^\aa-1$ for every $\lambda\geq0$, we can easily check that $\tilde{B}\in\mathbf{B}^\aa$, and the L\'{e}vy measure corresponding to $\tilde{B}$ is $\tilde{\nu}(\d y)=\frac{\aa}{\Gamma(1-\aa)}\e^{-y}y^{-1-\aa}\,\d y$. But it is clear that we are impossible to find any constants $C>0$ and $\beta\in(0,1)$ such that $\tilde{\nu}(\d y)\geq Cy^{-1-\beta}\,\d y$. For other counterexamples, refer to \cite[Chapter 16]{SSV2012} for instance.
\end{rem}

Next, we give the lower bound estimates on   $\W_p$ between empirical measures $\mu_t^{B,H}$ associated with the sfBM and $\mathfrak{m}$ as follows.
\begin{thm}[Lower bound estimates]\label{TH1}
Let $H\in(0,1)$ and $B\in\mathbf{B}$. Let $X^{B,H}$ be a $\T^d$-valued sfBM with index $H$ corresponding to $B$. 
\begin{itemize}
 \item[\textnormal{(1)}] Assume that $H= 1/2$,  $B\in\mathbf{B}_\aa\cap\mathbf{B}^\aa$ for some $\aa\in[0,1]$. Then for any
 $p\geq1$ and any large enough $t>0$,
\begin{equation}\label{1TH1}
\E\left[\W_p\left(\mu_t^B,\mathfrak{m}\right)\right]\gtrsim
\begin{cases}
t^{-1 /2},\quad & d<2(1+\aa),\\
\big(\ff {\log t} t\big)^{ 1/ 2},\quad & d=2(1+\aa),\\
t^{- \ff 1 {d-2\aa}},\quad & d>2(1+\aa).
\end{cases}
\end{equation}
\item[\textnormal{(2)}] Assume that $B\in\mathbf{B}_\aa$ for some $\aa\in(0,1]$ and $d>\aa/H$. Then for any $p>0$ and any large enough $t>0$,
\begin{equation}\label{THL1}
\E\big[\W_p\big(\mu_t^{B,H},\mathfrak{m}\big)\big]\gtrsim t^{-\frac{\min\{1,p\}}{d-\aa/H}}.
\end{equation}
\end{itemize}
\end{thm}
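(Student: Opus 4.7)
My plan splits the proof into a Fourier/test-function approach (handling part (1) in the regimes $d\le 2(1+\alpha)$) and a geometric coverage approach (handling part (1) for $d>2(1+\alpha)$ and all of part (2)). Two elementary preliminaries reduce the task. First, Jensen's inequality gives $\W_p(\mu,\mathfrak{m})\ge\W_1(\mu,\mathfrak{m})$ for $p\ge 1$, so part (1) reduces to a lower bound on $\W_1$. Second, for any coupling $\pi$ of $\mu_t^{B,H}$ with $\mathfrak{m}$, the first marginal is supported in the range $\Gamma_t:=\{X_s^{B,H}:s\in[0,t]\}$, so $\rho(x,y)\ge\rho(y,\Gamma_t)$ on $\mathrm{supp}(\pi)$ and
\[
\W_p(\mu_t^{B,H},\mathfrak{m})^{\max\{p,1\}}\ge\int_{\T^d}\rho(y,\Gamma_t)^p\,d\mathfrak{m}(y)\ge r^p\,\mathfrak{m}\bigl(\{y:\rho(y,\Gamma_t)>r\}\bigr),\quad r>0.
\]

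\textbf{Fourier approach for part (1), $d\le 2(1+\alpha)$.} Since $H=1/2$, $X^B$ is a stationary Markov process whose semigroup diagonalizes as $\e^{2\pi i\langle k,\cdot\rangle}\mapsto\e^{-tB(4\pi^2|k|^2)}\e^{2\pi i\langle k,\cdot\rangle}$, and we may assume $X_0^B\sim\mathfrak{m}$. Fix $k\in\Z^d\setminus\{0\}$; the function $f_k(x):=(2\pi|k|)^{-1}\cos(2\pi\langle k,x\rangle)$ is $1$-Lipschitz with zero $\mathfrak{m}$-mean, so Kantorovich--Rubinstein gives $\W_1(\mu_t^B,\mathfrak{m})\ge|\int f_k\,d\mu_t^B|$. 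Combining $\E[\e^{-\lambda S_s^B}]=\e^{-sB(\lambda)}$ with the Markov property yields
\[
\E\Bigl(\int f_k\,d\mu_t^B\Bigr)^2=\frac{2}{(2\pi|k|)^2 t^2}\int_0^t(t-s)\e^{-sB(4\pi^2|k|^2)}\,ds\asymp\frac{1}{|k|^2\,t B(|k|^2)}
\]
for large $t$. To pass from $L^2$ to $L^1$ I would apply the Paley--Zygmund-type bound $\E|Y|\ge(\E Y^2)^{3/2}/(\E Y^4)^{1/2}$, computing the fourth moment via a four-point correlation expansion (using Gaussianity of the BM conditional on $S^B$ and independence of the subordinator's increments, the integrand factors into $\prod\e^{-(s_i-s_{i-1})B(2\pi^2|\sum_{j\ge i}\epsilon_j k|^2)}$ for $\epsilon\in\{\pm1\}^4$ with $\sum\epsilon_j=0$), which produces $\E Y^4\lesssim(\E Y^2)^2$. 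Choosing $|k|=1$ then gives $\E\W_1\gtrsim t^{-1/2}$ for $d<2(1+\alpha)$; the critical case $d=2(1+\alpha)$ requires a refinement that sums such estimates over a dyadic range $|k|\le R$, balancing the Lipschitz norm of the superposition against the divergent sum $\sum_{|k|\le R}|k|^{-2}B(|k|^2)^{-1}\asymp\log R$ to extract the $\log t$ factor with $R$ polynomial in $t$.

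\textbf{Coverage approach for part (1), $d>2(1+\alpha)$, and part (2).} Using $Y^{1/p}\ge Y$ for $Y\in[0,1]$ and $p\ge 1$, the transport inequality above reduces the task to showing $\E[\mathfrak{m}(\Gamma_t+B(0,r))]\le 1/2$ for $r=c\,t^{-1/(d-\alpha/H)}$, which then gives $\E\W_p\gtrsim r$ for $p\ge 1$ and $\E\W_p\gtrsim r^p$ for $p\in(0,1)$, both matching the claimed rates. The volume estimate in turn follows from the Minkowski-content bound
\[
\E[\mathfrak{m}(\Gamma_t+B(0,r))]\le C\,t\,r^{d-\alpha/H},
\]
which I would prove by partitioning $\T^d$ into $\sim r^{-d}$ cubes of side $r$, bounding the expected number of cubes visited by $1+t/\E\tau_r$ through a strong Markov/renewal argument on the successive ball-exit times, and estimating $\E\tau_r\gtrsim r^{\alpha/H}$. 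The exit-time estimate uses $\P(\tau_r>s)\ge\P(\sup_{v\le S_s^B}|X_v^H|\le r)$, the self-similarity $\sup_{v\le U}|X_v^H|\stackrel{d}{=}U^H\sup_{v\le 1}|X_v^H|$ of the fBM, and the subordinator tail $\P(S_s^B\le u)\le\e^{\lambda u-sB(\lambda)}$ optimized in $\lambda$ against $B(\lambda)\le c\lambda^\alpha$.

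\textbf{Main obstacle.} I expect the hardest step to be the sharp exit-time estimate $\E\tau_r\gtrsim r^{\alpha/H}$ for the non-Markov sfBM, which must thread a concentration bound for $X^H$ over a random time horizon (where its dependency structure is non-trivial) with the uniform subordinator tail coming from $B\in\mathbf{B}_\alpha$. The secondary difficulty is the fourth-moment bookkeeping in the Fourier approach and the superposition argument at the critical dimension $d=2(1+\alpha)$, where the Lipschitz norm of the test function must be controlled uniformly in the cutoff $R$ while retaining the $\log R$ gain.
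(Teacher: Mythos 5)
Your single-mode Fourier argument for part (1) in the subcritical regime $d<2(1+\alpha)$ is sound and in fact more elementary than what the paper does: the paper's Lemma \ref{W}, inequality \eqref{W1L}, combined with the two-sided estimate $\E[|\widehat{\mu_t^B}(\xi)|^2]\asymp (tB(2\pi^2|\xi|^2))^{-1}$ from Lemma \ref{XLT} and the $L^4$ upper bound from Proposition \ref{NUP}, is essentially a built-in Paley--Zygmund argument with a whole family of modes $|\xi|\lesssim\vv^{-1/2}$ in place of a single $k$; your scalar version suffices for $d<2(1+\alpha)$ because a single mode already realizes the $t^{-1/2}$ rate there.

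However, your critical-dimension superposition has a genuine gap. If you take $F=\sum_{|k|\le R}a_k\cos(2\pi\langle k,x\rangle)$ with $a_k\sim|k|^{-1}$ (the right normalization to capture the divergent sum), then the pointwise Lipschitz norm $\|\nabla F\|_\infty\lesssim\sum_{|k|\le R}|a_k||k|\sim R^d$ grows polynomially in $R$, while the $L^2$ mass of $\int F\,d\mu_t^B$ grows only like $\sqrt{\log R/t}$; no choice of $R$ makes the Kantorovich--Rubinstein quotient come out to $\sqrt{\log t/t}$. The paper circumvents this by never estimating a Lipschitz norm in $L^\infty$: Lemma \ref{W}, \eqref{W1L}, lower-bounds $\W_1$ by a quadratic expression in $\|\nabla u_\vv\|_{L^2}$ and $\|\nabla u_\vv\|_{L^4}$, where the heat-regularized Poisson potential $u_\vv$ is a data-dependent test function whose regularity is controlled only in $L^2$/$L^4$. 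This is the mechanism that picks up $\log(1/\vv)$ at the critical dimension via $\|\phi_{c\vv}\|_{l^{2(1+\alpha)}}^{2(1+\alpha)}\asymp|\log\vv|$ (see \eqref{1W1U}--\eqref{2W1U}), and I do not see how to replace it by an explicit fixed Lipschitz test function.

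Your coverage argument for part (2) and for part (1) with $d>2(1+\alpha)$ is a genuinely different route from the paper's, and it has a second, more serious gap. The paper proves part (2) by replacing $\mu_t^{B,H}$ by the discrete-time measure $\mu_N^{B,H}=\frac1N\sum_i\delta_{X_{t_i}^{B,H}}$, bounding $\E[\W_p(\mu_t^{B,H},\mu_N^{B,H})]\lesssim(t/N)^{pH/\alpha}$ via the explicit coupling and the moment estimate $\E[(S_r^B)^p]\lesssim r^{p/\alpha}$, and then invoking Kloeckner's quantization bound $\W_p(\mu_N^{B,H},\mathfrak{m})\gtrsim N^{-p/d}$. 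This argument uses only forward increment moments and never a Markov or renewal structure. Your box-counting argument, by contrast, hinges on the step ``expected number of $r$-cubes visited $\le 1+t/\E\tau_r$ via strong Markov/renewal at successive ball-exit times''. For $H\ne 1/2$ the sfBM $X^{B,H}$ is not Markov (the fBM part carries long-range memory), so the exit times are neither independent nor identically distributed conditional on positions, and the renewal bound does not hold as stated; you flag this as the ``hardest step'' but do not offer a mechanism to close it. For $H=1/2$ (hence for the $d>2(1+\alpha)$ regime of part (1)) the sBM is strong Markov and your coverage argument should go through, but that regime is already covered by the Fourier/PDE method in the paper, so nothing new is needed there. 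If you want a route to part (2) that avoids the Markov assumption, the paper's discretization-plus-quantization argument is the one to use.
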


\begin{rem}
\textnormal{(1)} When $H=1/2$ and $p=2$,  the above rate of convergence coincides with the one in \cite{WangWu} obtained by quite a different approach for subordinated diffusion processes on compact Riemannian manifolds. Moreover, we should emphasize that the assumption on the Bernstein function $B$ is weaker than the one in \cite{WangWu} since we do not require additionally that $B\in\mathbb{B}$, where
$$\mathbb{B}:= \Big\{B\in {\bf B}:\  \int_1^\infty s^{\ff d 2 -1}\e^{-r B(s)}\,\d s<\infty\mbox{ for all }r>0\Big\}.$$
For more details on the relation between $\mathbb{B}$ and $\mathbf{B}^\aa,\,\mathbf{B}_\aa$, one may refer to \cite[Remark 1.1]{LiWu}, where extensions of \cite{WangWu} to complete (not necessarily compact) Riemannian manifolds possibly with boundary are also obtained.

\textnormal{(2)} On the one hand, as a straightforward consequence of Theorems \ref{TH2} and \ref{TH1}, when $H=1 / 2$ and $B\in\mathbf{B}_\aa\cap\mathbf{B}^\aa$ for some $\aa\in[0,1]$, we establish both the upper and the comparable lower bounds on $\W_p$ for all $p\ge 1$ and all dimensions,
\begin{equation*}
\E\left[\W_p\left(\mu_t^B,\mathfrak{m}\right)\right]\asymp
\begin{cases}
t^{-1 /2},\quad & d<2(1+\aa),\\
\big(\ff {\log t} t\big)^{1/ 2},\quad & d=2(1+\aa),\\
t^{- \ff 1 {d-2\aa}},\quad & d>2(1+\aa).
\end{cases}
\end{equation*}
On the other hand, for general $H\in(0,1)$, if $d>2+\aa/H$ and $B$ is given by \eqref{LT} satisfying $\nu(\d y)\ge c y^{-1-\aa}\d y$ for some constants $c>0$ and $\aa\in(0,1)$, we also obtain the following precise convergence rate
$$\E\big[\W_p\big(\mu_t^{B,H},\mathfrak{m}\big)\big]\asymp t^{-\ff 1 {d-\aa/H}},\quad p\ge 1.$$
\end{rem}

As an application,  in the next theorem, we give a discrete time approximation version of Theorems \ref{TH1} and \ref{TH2}, which may be interesting in numerical simulations of sfBM for instance; see e.g. the very recent paper \cite{DJL} on numerical simulations of mean-field Ornstein--Uhlenbeck process. For every nonnegative number $a$, let $\lfloor a\rfloor$ denote the greatest integer less than or equal to $a$.
\begin{thm}\label{W1TU}
Suppose that
\begin{itemize}
\item[\textnormal{(1)}] $H= 1/2$ and $B\in\mathbf{B}^\aa$ for some $\aa\in[0,1]$,
\item[\textnormal{(2)}] $H\in(0, 1/2)\cup ( 1/2 ,1)$,
    and $B$ is a Bernstein function represented by \eqref{LK} such that $\nu(\d y)\ge c y^{-1-\aa}\,\d y$ for some constants $c>0$ and $\aa\in(0,1)$.
\end{itemize}
Let $X^{B,H}$ be a $\T^d$-valued sfBM with index $H$ corresponding to $B$. Let $\beta>0$ and set $\tau\asymp t^{-\beta}$ for any $t>0$. Then for any large enough $t>0$,
\begin{equation*}
\E\big[\W_2\big(\mu_{\tau,t}^{B,H},\mathfrak{m}\big)\big]\lesssim
\begin{cases}
t^{- 1 /2},\quad &d\le 2,\\
t^{-\min\{ 1 /2, (1+\beta)/d\}},\quad & 2<d<2+\aa/H,\\
\max\Big\{\sqrt{\ff{\log t}{t}}, t^{-\ff {1+\beta}d}\Big\},\quad & d=2+\aa/H,\\
t^{-\min\left\{d-\ff \aa H,\ff {1+\beta}d\right\}},\quad &d>2+\aa/H,
\end{cases}
\end{equation*}
where
$$
\mu_{\tau,t}^{B,H}:=\ff 1{\lfloor t/\tau\rfloor}\sum_{k=1}^{\lfloor t/\tau\rfloor}\dd_{X_{k\tau}^{B,H}},\quad t\geq\tau>0.
$$
\end{thm}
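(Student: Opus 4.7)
The plan is to decompose the Wasserstein distance via the triangle inequality into a continuous empirical error and a discretization error, and to estimate each piece separately. Specifically, write
\begin{equation*}
\W_2(\mu_{\tau,t}^{B,H},\mathfrak{m})\leq \W_2(\mu_{\tau,t}^{B,H},\mu_t^{B,H})+\W_2(\mu_t^{B,H},\mathfrak{m}).
\end{equation*}
The second term is directly handled by Theorem \ref{TH2}, since the hypotheses on $B$ in cases \textnormal{(1)} and \textnormal{(2)} of the present theorem match those of Theorem \ref{TH2}. This yields the rate $t^{-1/2}$, $\sqrt{\log t/t}$, or $t^{-1/(d-\alpha/H)}$ according to the dimension regime.

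For the discretization term, let $N=\lfloor t/\tau\rfloor$ and consider the natural coupling that transports the mass of $\mu_t^{B,H}$ on each interval $[(k-1)\tau,k\tau)$ onto the atom $\delta_{X_{k\tau}^{B,H}}$ of $\mu_{\tau,t}^{B,H}$; the small mass defect between $\tau/t$ and $1/N$ together with the residual piece on $[N\tau,t]$ contributes only an $O(\tau/t)$ correction. This yields
\begin{equation*}
\E\bigl[\W_2(\mu_{\tau,t}^{B,H},\mu_t^{B,H})^2\bigr]\lesssim \frac{1}{t}\sum_{k=1}^N\int_{(k-1)\tau}^{k\tau}\E\bigl[\rho(X_s^{B,H},X_{k\tau}^{B,H})^2\bigr]\,\d s+\frac{\tau}{t}.
\end{equation*}
The pointwise estimate uses the conditional Gaussian structure of the sfBM: given $S^B$, the increment $X_s^{B,H}-X_{k\tau}^{B,H}$ is centered Gaussian with coordinate-wise variance $(S_{k\tau}^B-S_s^B)^{2H}$, and the stationary increments of $S^B$ together with $\rho(x,y)\leq|x-y|$ give
\begin{equation*}
\E\bigl[\rho(X_s^{B,H},X_{k\tau}^{B,H})^2\bigr]\leq d\,\E\bigl[(S_{k\tau-s}^B)^{2H}\bigr].
\end{equation*}
When $2H\leq 1$, Jensen's inequality applied to the concave map $x\mapsto x^{2H}$ together with $\E S_u^B=B'(0)u$ yields $\E[(S_u^B)^{2H}]\lesssim u^{2H}\lesssim\tau^{2H}$; when $2H>1$, the elementary inequality $\rho^2\leq C\rho^{1/H}$ (valid on the bounded torus) combined with Gaussian moments and $\E S_u^B\lesssim u$ yields $\E[\rho^2]\lesssim\tau$. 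In either case the discretization error satisfies $\E[\W_2(\mu_{\tau,t}^{B,H},\mu_t^{B,H})]\lesssim \tau^{\min(H,1/2)}$.

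The main obstacle I anticipate is upgrading this pathwise coupling bound to the sharper term $t^{-(1+\beta)/d}$ appearing in the theorem, which reflects the $N^{-1/d}$ optimal quantization rate on $\T^d$ for empirical measures with $N\asymp t^{1+\beta}$ atoms (in the spirit of Ajtai--Komlos--Tusnady). Achieving this requires an argument that exploits the sheer number of atoms rather than pure pathwise continuity; the most natural route is a Fourier/spectral analysis of the negative Sobolev norm $\|\mu_{\tau,t}^{B,H}-\mathfrak{m}\|_{H^{-s}}$, in which the $k$-th Fourier coefficient of $\mu_{\tau,t}^{B,H}-\mathfrak{m}$ is expressed via the characteristic functions $\E[e^{2\pi i k\cdot X_{j\tau}^{B,H}}]$ and controlled through the subordinator Laplace transform \eqref{LT}, mirroring the eigenfunction-expansion argument expected to underlie the proof of Theorem \ref{TH2}. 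Once both bounds are in hand, one takes the maximum of the two contributions and reads off the case-by-case rates claimed in the statement.
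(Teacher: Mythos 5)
Your proposal correctly identifies that a Fourier/spectral argument is needed, but the concrete argument you actually carry out — the pathwise coupling giving $\W_2(\mu_{\tau,t}^{B,H},\mu_t^{B,H})\lesssim\tau^{\min\{H,1/2\}}$ — does not produce the stated rates. With $\tau\asymp t^{-\beta}$ this yields a discretization error of order $t^{-\beta\min\{H,1/2\}}$, and for small $\beta$ this is strictly worse than the theorem's rate. For instance, when $d\le 2$ the theorem claims $t^{-1/2}$ uniformly in $\beta>0$, whereas for $\beta<1/\min\{H,1/2\}$ your triangle-inequality bound is only $t^{-\beta\min\{H,1/2\}}\gg t^{-1/2}$. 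The same failure occurs in every dimension: your rate is $\max\{t^{-1/(d-\alpha/H)},\,t^{-\beta\min\{H,1/2\}}\}$ rather than $\max\{t^{-1/(d-\alpha/H)},\,t^{-(1+\beta)/d}\}$. The underlying issue is that the transport-per-step cost $\tau^{\min\{H,1/2\}}$ treats each interval independently, while the Wasserstein distance between two richly interleaved atomic/absolutely-continuous measures is far smaller; the $(1+\beta)/d$ exponent reflects the sample-size effect of having $N\asymp t^{1+\beta}$ atoms, which a pathwise coupling cannot see.

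Your closing paragraph gestures at the right fix but leaves it as a plan. The paper dispenses with the triangle inequality altogether: it applies \eqref{WPU} with $p=2$ directly to $\mu_{\tau,t}^{B,H}-\mathfrak{m}$, reducing matters to $\E[\|\nn u_\vv\|_{L^2(\T^d)}^2]=\sum_{\xi\ne 0}\E[|\widehat{\mu_{\tau,t}^{B,H}}(\xi)|^2]\exp(-\vv|\xi|^2)/(4\pi^2|\xi|^2)$. The crucial inputs are Lemmas~\ref{HMS} and~\ref{HMHU}, which show $\E[|\widehat{\mu_{\tau,t}^{B,H}}(\xi)|^2]\lesssim t^{-1}\big(|\xi|^{-\alpha/H}+\tau\big)$: the first summand is the continuous-time decorrelation coming from the subordinator Laplace transform \eqref{LT} (combined, for $H\ne 1/2$, with Lemma~\ref{SDU} and local non-determinism), while the second summand $\tau/t=1/N$ is the sample-size term produced by the diagonal entries of the double sum over the time indices. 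Substituting this into the gradient formula yields $\E[\|\nn u_\vv\|_{L^2}^2]\lesssim t^{-1}\|\phi_{\vv}\|_{l^{2+\alpha/H}}^{2+\alpha/H}+t^{-(1+\beta)}\|\phi_{\vv}\|_{l^2}^2$, and a dimension-by-dimension optimization over $\vv$ using Lemma~\ref{g} gives exactly the claimed rates. To turn your proposal into a proof you must prove the analogue of Lemma~\ref{HMHU} (including the use of Lemma~\ref{SDU} for $H\ne 1/2$) and carry out the $\vv$-optimization across all the dimensional regimes; the pathwise coupling step should be discarded.
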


The direct motivation for the above study is two fold. On the one hand, recently, rates of convergence and even exact limits of empirical measures associated with subordinated diffusion processes on compact and noncompact Riemannian manifolds under $\W_2$ are investigated, where the Markov property plays a crucial role; see \cite{WangWu,LiWu,LiWu1,LiWu2}. However, in the setting of the aforementioned papers,  the related questions on general Wasserstein distance $\W_p$, especially for $p> 2$, are still open. On the other hand, very recently, rates of convergence of empirical measures associated with fBMs were obtained in \cite{HMT}. So it should be interesting to study the rates of convergence of empirical measures associated with sfBMs under the Wasserstein distance.

In the literature, the study on asymptotic behaviours of Wasserstein distances between empirical measures associated with i.i.d. random variables and the reference measure, particularly on estimating the rate of convergence, has received lots of attentions; see e.g.
\cite{FG2015,Tal2014,DSS2013,GL2000,AKT1984}. In the breakthrough paper \cite{eAST} (where the precise limit is proved), a new PDE method was introduced, which inspired many recent studies on the convergence of empirical measures under Wasserstein distances; refer to \cite{Zhu,Bor2021,BoLe2021,LedZhu2021,St2021a,St2021b,BL2019,Led2017} for the case of i.i.d. (including weakly dependent) random variables  and \cite{eW2,eW3,eW1,eWZ} for the case of diffusion processes, as well as \cite{eW4} for the case of stochastic partial differential equations.

The rest of this paper is organized as follows.  In Section 2, we  briefly introduce some basics on the Fourier analysis on $\T^d$ and recall some known results which will be used to prove our main results. Sections 3, 4  and 5 are devoted to prove Theorems \ref{TH2},  \ref{TH1} and \ref{W1TU}, respectively. We should mention that the proof is motivated by the aforementioned recent works \cite{HMT} and \cite{WangWu}. For the reader's convenience, an appendix is included to provide some long but elementary calculations as a supplement to the proof of Lemma \ref{SDU}.

\section{Preparations}
In this part, we first recall necessary basic facts on Fourier analysis on the torus $\T^d$ and then introduce some known results which will be used in the sequel. We may identify $\T^d$ with the cube $[-\frac{1}{2}, \frac{1}{2}]^d$ in $\R^d$, and we identify the measure on $\T^d$ with the restriction of the Lebesgue measure $\d x$ on $[-\frac{1}{2}, \frac{1}{2}]^d$. Functions on $\T^d$ may be thought as functions $f$ defined on $\R^d$ such that $f$ is $1$-periodic in each variable, i.e., $f(x+\xi)=f(x)$ for all $x\in\T^d$ and $\xi\in\mathbb{Z}^d$.  For a detailed study on Fourier analysis on the flat torus, see e.g. \cite{DymMc}. Let $\mathbb{N}=\{1,2,\cdots\}$.

For every $p\in[1,\infty]$, denote the classic $L^p$ (resp. $l^p$) space over $\T^d$ (resp. $\mathbb{Z}^d$) by $L^p(\T^d)$  (resp. $l^p(\mathbb{Z}^d)$) with norm $\|\cdot\|_{L^p(\T^d)}$ (resp. $\|\cdot\|_{l^p(\mathbb{Z}^d)}$). Let $\scr{M}$ be the class of all finite signed Borel measures on $\T^d$. We denote
 the imaginary unit by $\i$.

For any $\mu\in\scr{M}$ and any $f\in L^1(\T^d)$, we use
$$\hat{\mu}(\xi)=\int_{\mathbb{T}^d}\exp(-2\pi{\rm i}\langle\xi, x\rangle)\,\mu(\d x),\quad \hat{f}(\xi)=\int_{\mathbb{T}^d}\exp(-2\pi {\rm i}\<\xi, x\>)f(x)\,\d x, \quad \xi\in \Z^d,$$
to denote the Fourier transforms of $\mu$ and $f$, respectively. For a vector valued function, we define its Fourier transform as the Fourier transform of all components. Then, by the inverse Fourier transform, for a sufficiently smooth function $f$ on $\T^d$, we can express $f$ as an absolutely convergent Fourier series, i.e.,
$$f(x)=\sum_{\xi\in\mathbb{Z}^d}\exp(2\pi {\rm i}\<\xi, x\>)\hat{f}(\xi),\quad x\in\T^d.$$
According to this and the orthogonality, i.e.,
$$\int_{\mathbb{T}^d}\exp(2\pi{\rm i}\<\xi,x\>)\d x=\dd_0(\xi),\quad \xi\in\mathbb{Z}^d,$$
one can verify that (see \cite[(2.2)]{HMT}), for any $f\in L^{2n}(\mathbb{T}^d)$ and any $n\in\mathbb{N}$,
\begin{equation}\label{FP}
\int_{\mathbb{T}^d}|f|^{2n}(x)\d x=\sum_{\xi_1,\xi_2,\cdots,\xi_{2n}\in\Z^d}\prod_{i=1}^{2n}\hat{f}(\xi_i)\dd_0\Big(\sum_{i=1}^{2n}\xi_i\Big),
\end{equation}
where $\delta_0$ denotes the Dirac measure at the origin. In particular, when $n=1$, \eqref{FP} reduces to Parseval's identity
\begin{equation}\label{F2}
\|f\|_{L^2(\T^d)}=\|\hat{f}\|_{l^2(\mathbb{Z}^d)},\quad f\in L^2(\mathbb{T}^d).
\end{equation}
Combining \eqref{F2} and the simple inequality $\|f\|_{L^\infty(\T^d)}\leq\|\hat{f}\|_{l^1(\mathbb{Z}^d)}$, by the Riesz--Thorin interpolation theorem (see e.g. \cite[page 3]{Davies89}), we immediately derive the following Hausdorff--Young inequality, i.e.,
\begin{equation}\label{HY}
\Big(\int_{\T^d}|f|^p(x)\d x\Big)^{1/p}\le\Big(\sum_{\xi\in\Z^d}|\hat{f}|^q(\xi)\Big)^{1/q},\quad f\in L^p(\T^d),
\end{equation}
whenever $p\in[2,\infty]$ and $ q:=\ff p {p-1}$.

Let $(P_t)_{t\ge 0}$ be the heat semigroup/flow corresponding to the standard Brownian motion on $\T^d$. Then for each $\mu\in\scr{M}$, $P_t\mu$ can be expressed as the convolution of $\mu$ and the Gaussian kernel $q_t$ on $\T^d$, i.e.,
$$P_t\mu(x)=\int_{\T^d}q_t(x-y)\,\mu(\d y),\quad x\in\T^d,\,t>0,$$
where
$$q_t(x)=\frac{1}{(2\pi t)^{d/2}}\sum_{k\in\mathbb{Z}^d}\exp\Big(-\frac{|x-k|^2}{2t}\Big),\quad t>0,\,x\in\T^d.$$
By the Fourier transform, we have
\begin{equation}\label{FM}
\widehat{P_t \mu}(\xi)=\exp(-2\pi^2 t|\xi|^2)\hat{\mu}(\xi),\quad \xi\in\Z^d,\,t>0.
\end{equation}

Let $\mu\in\scr{M}$ such that $\mu(\T^d)=0$. Assume that $h$ is a solution to the Poisson's equation on $\T^d$, i.e.,
\begin{equation*}\begin{split}
-\Delta u=\mu.
\end{split}\end{equation*}
It is easy to see that the Fourier transforms of $h$, $\nabla h$ and $\mu$ are closely related as follows, i.e., for every $\xi\in\mathbb{Z}^d$,
$$4\pi^2|\xi|^2\hat{h}(\xi)=\hat{\mu}(\xi),$$
and hence
\begin{equation}\label{GRA}
2\pi|\xi|^2\widehat{\nn h}(\xi)=\i\xi\hat{\mu}(\xi).
\end{equation}

For every $\epsilon>0$, define
$$\phi_\epsilon(\xi)=\ff{\exp(-\epsilon|\xi|^2)}{|\xi|+1},\quad \xi\in\mathbb{Z}^d.$$
Employing polar coordinate, by a careful computation, we have the following estimate
(see \cite[Lemma 3.3]{HMT} for a detailed proof).
\begin{lem}\label{g}
For any $d\ge 1$, $\epsilon>0$ and $p\ge 1$,
$$\|\phi_\epsilon\|_{l^p(\mathbb{Z}^d)}\asymp
\begin{cases}
1,\quad &d<p,\\
|\log\epsilon|^{1/p},\quad &d=p,\\
\epsilon^{-\ff 1 2(d/p-1)},\quad &d>p.
\end{cases}$$
\end{lem}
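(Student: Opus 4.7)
The plan is to reduce the lattice sum defining $\|\phi_\epsilon\|_{l^p(\mathbb{Z}^d)}^p$ to a continuous integral over $\mathbb{R}^d$ and then evaluate that integral in polar coordinates, splitting the bulk from the tail and tracking the three regimes separately.

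First, I would observe that the function $\psi_\epsilon(x) := \exp(-p\epsilon|x|^2)/(|x|+1)^p$ is radial, smooth, and decreasing in $|x|$ outside a bounded region. A standard sum-versus-integral comparison (for instance, by comparing $\psi_\epsilon(\xi)$ with the average of $\psi_\epsilon$ over the unit cube centered at $\xi$, using the mean value theorem to control the discrepancy by the gradient) yields
$$
\sum_{\xi\in\mathbb Z^d}\psi_\epsilon(\xi)\asymp \int_{\mathbb R^d}\psi_\epsilon(x)\,\d x,
$$
uniformly in $\epsilon>0$ (the constants absorbing the finitely many lattice points near the origin). Switching to polar coordinates reduces the problem to estimating
$$
I(\epsilon):=\int_0^\infty \frac{\exp(-p\epsilon r^2)}{(r+1)^p}\,r^{d-1}\,\d r.
$$

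Next I would split $I(\epsilon)=\int_0^1+\int_1^\infty$. The first piece is bounded by a constant independent of $\epsilon$. On $[1,\infty)$ we have $(r+1)^p\asymp r^p$, so the relevant quantity is
$$
J(\epsilon):=\int_1^\infty e^{-p\epsilon r^2}r^{d-1-p}\,\d r.
$$
If $d<p$, then $r^{d-1-p}$ is already integrable on $[1,\infty)$, so $J(\epsilon)\asymp 1$ uniformly in $\epsilon$; hence $I(\epsilon)\asymp 1$. If $d=p$, then substituting $u=p\epsilon r^2$ gives $J(\epsilon)=\frac12\int_{p\epsilon}^\infty e^{-u}u^{-1}\,\d u$; the portion over $[p\epsilon,1]$ contributes $\asymp|\log\epsilon|$ while the portion over $[1,\infty)$ contributes $O(1)$, so $I(\epsilon)\asymp|\log\epsilon|$. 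If $d>p$, the substitution $u=\sqrt{p\epsilon}\,r$ converts $J(\epsilon)$ into $(p\epsilon)^{-(d-p)/2}\int_{\sqrt{p\epsilon}}^\infty e^{-u^2}u^{d-1-p}\,\d u$; since $d-1-p>-1$, the integral converges to the finite constant $\int_0^\infty e^{-u^2}u^{d-1-p}\,\d u$ as $\epsilon\downarrow 0$ and is bounded away from $0$ for small $\epsilon$, giving $I(\epsilon)\asymp\epsilon^{-(d-p)/2}$.

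Raising to the power $1/p$ yields the three claimed asymptotics, since $\bigl(\epsilon^{-(d-p)/2}\bigr)^{1/p}=\epsilon^{-\frac12(d/p-1)}$. The main technical point, and the one place where care is needed, is the uniform-in-$\epsilon$ sum-to-integral comparison: the function $\psi_\epsilon$ has its largest values near the origin where the denominator $(|x|+1)^{-p}$ is not monotone in a useful way, so I would either isolate the finite set of lattice points near $0$ (whose total mass is harmlessly $O(1)$) and handle the remaining tail where $\psi_\epsilon$ is genuinely radially decreasing, or invoke a direct local-averaging bound $|\psi_\epsilon(\xi)-\psi_\epsilon(x)|\lesssim \sup_{y\in\xi+[-\frac12,\frac12]^d}|\nabla\psi_\epsilon(y)|$ and verify the gradient is summable/integrable with the right order. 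Everything else is routine calculus.
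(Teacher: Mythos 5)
Your argument is correct and is essentially the approach the paper points to (the lemma is quoted from \cite[Lemma 3.3]{HMT} with the hint ``employing polar coordinate''): pass from the lattice sum to the radial integral, split at $r=1$, and read off the three regimes via the substitutions you describe. The sum-to-integral comparison you flag is in fact cleaner than you fear, because $\psi_\epsilon$ is radially nonincreasing for every $\epsilon>0$ (both factors decrease in $|x|$), so a layer-cake count of lattice points in the superlevel sets yields $\sum_{\xi\in\mathbb{Z}^d}\psi_\epsilon(\xi)\asymp \psi_\epsilon(0)+\int_{\mathbb{R}^d}\psi_\epsilon(x)\,\d x$ with constants depending only on $d$, and the additive term $\psi_\epsilon(0)=1$ is harmlessly absorbed since $\int_{\mathbb{R}^d}\psi_\epsilon\gtrsim 1$ in all three regimes.
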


We now borrow a lemma from \cite[Lemma 2.1]{HMT}, one can also refer to \cite[Proposition 5.3]{eWZ} and  \cite[Appendix]{eW3}, which contains upper and lower bounds on the Wasserstein distance $\W_p$  given by $L^p$ norms of the gradient of solutions to the Poisson's equation regularized by the heat flow $(P_t)_{t>0}$ on $\T^d$.
Let $H^1(\T^d)$ be the Sobolev space defined as
$$H^1(\T^d)=\Big\{u\in L^2(\T^d):\ \sum_{k\in\mathbb{Z}^d}\big(1+|k|^2\big)|\hat{u}(k)|^2<\infty \Big\}.$$

\begin{lem}\label{W}
Let $\mu,\nu\in\scr{P}$, and for any $\vv>0$, let $u_\vv\in H^1(\mathbb{T}^d)$ be the solution to the following Poisson's equation on $\T^d$, i.e.,
$$-\Delta u= P_{\vv}(\mu-\nu).$$
Then, there exists a constant $c>0$ such that
\begin{equation}\label{W1}\W_1(\mu,\nu)\le c\inf_{\vv>0}\big\{\vv^{1/ 2}+\|\nn u_\vv\|_{L^2(\T^d)}\big\},\end{equation}
and
\begin{equation}\label{W1L}
\W_1(\mu,\nu)\ge \sup_{\kappa,\vv>0}\Big\{\ff 1 \kappa\|\nn u_\vv\|_{L^2(\T^d)}^2-\ff c {\kappa^3}\|\nn u_\vv\|_{L^4(\T^d)}^4\Big\}.\end{equation}
In addition, if $\nu=\mathfrak{m}$, then for any $p>1$, there exists a constant $C>0$ such that
\begin{equation}\label{WPU}
\W_p^p(\mu,\mathfrak{m})\le C\inf_{\vv>0}\Big\{\vv^{p/2}+\|\nn u_\vv\|_{L^p(\T^d)}^p\Big\}.
\end{equation}
\end{lem}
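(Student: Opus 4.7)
My plan is to establish all three bounds by combining Kantorovich--Rubinstein duality with the smoothing property of the heat semigroup via the Poisson equation $-\DD u_\vv=P_\vv(\mu-\nu)$. For the upper bound \eqref{W1}, I first split using the triangle inequality,
\begin{equation*}
\W_1(\mu,\nu)\le\W_1(\mu,P_\vv\mu)+\W_1(P_\vv\mu,P_\vv\nu)+\W_1(P_\vv\nu,\nu),
\end{equation*}
and bound the two outer terms by a constant multiple of $\sqrt{\vv}$ by coupling $\mu$ (resp.\ $\nu$) with $P_\vv\mu$ (resp.\ $P_\vv\nu$) through the heat flow on $\T^d$ run for time $\vv$. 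For the middle term, duality gives $\W_1(P_\vv\mu,P_\vv\nu)=\sup_{\|\nn f\|_\infty\le 1}\int f\,\d(P_\vv\mu-P_\vv\nu)$, and integration by parts against the Poisson equation together with Cauchy--Schwarz yields
\begin{equation*}
\int f\,\d(P_\vv\mu-P_\vv\nu)=\int\nn f\cdot\nn u_\vv\,\d x\le\|\nn u_\vv\|_{L^2(\T^d)},
\end{equation*}
whence \eqref{W1} follows after optimizing in $\vv$.

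For the $\W_p$ upper bound \eqref{WPU} with $p>1$, I would use the split $\W_p(\mu,\mathfrak{m})\le\W_p(\mu,P_\vv\mu)+\W_p(P_\vv\mu,\mathfrak{m})$, which exploits the stationarity $P_\vv\mathfrak{m}=\mathfrak{m}$. The first term contributes $\vv^{p/2}$ via the $p$-th moment of the Gaussian increment in the coupling, while $\W_p^p(P_\vv\mu,\mathfrak{m})$ is dominated by $\|\nn u_\vv\|_{L^p(\T^d)}^p$ through a Peyr\'e-type inequality: since $P_\vv\mu$ has a smooth, strictly positive density after mollification, the near-identity map $x\mapsto x+\nn u_\vv(x)$ serves as an approximate transport whose cost can be read off directly, giving the stated bound.

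The main obstacle is the lower bound \eqref{W1L}, because the natural candidate $u_\vv/\kappa$ need not be $1$-Lipschitz. My plan is to invoke Kantorovich duality with a modified test function $\psi_\kappa$ obtained by truncating $P_\vv u_\vv/\kappa$ on the set where its gradient exceeds $1$, then extending to a global $1$-Lipschitz function (for instance via the McShane extension). On the good set, self-adjointness of $P_\vv$ together with integration by parts produces
\begin{equation*}
\int\psi_\kappa\,\d(\mu-\nu)\approx\kappa^{-1}\int u_\vv\cdot P_\vv(\mu-\nu)\,\d x=\kappa^{-1}\|\nn u_\vv\|_{L^2(\T^d)}^2,
\end{equation*}
while the bad set $\{|\nn P_\vv u_\vv|>\kappa\}$ is handled by Chebyshev's inequality applied to the fourth power, producing the correction $\kappa^{-3}\|\nn u_\vv\|_{L^4(\T^d)}^4$. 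The delicate point is calibrating the truncation so that the good-set contribution is essentially undisturbed and the bad-set penalty matches the stated $L^4$ form.
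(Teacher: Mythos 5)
The paper does not prove this lemma at all: it is quoted verbatim from \cite[Lemma 2.1]{HMT} (with \cite[Proposition 5.3]{eWZ} and \cite[Appendix]{eW3} cited as alternatives), so there is no ``paper's own proof'' to compare with. With that caveat, here is an assessment of your sketch on its own merits.

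Your argument for \eqref{W1} is correct and standard: the triangle-inequality split, the $O(\sqrt{\vv})$ bound for the heat-flow coupling, and Kantorovich duality plus integration by parts with Cauchy--Schwarz for the middle term all go through. The main problem is your treatment of \eqref{WPU}. The assertion that ``the near-identity map $x\mapsto x+\nn u_\vv(x)$ serves as an approximate transport whose cost can be read off directly'' is not a proof and does not survive scrutiny: this map is not a transport map from $P_\vv\mu$ to $\mathfrak{m}$ (that would require the Monge--Amp\`ere relation $\det(I+D^2 u_\vv)=$ density of $P_\vv\mu$, which $u_\vv$ has no reason to satisfy), its Jacobian need not even be positive, and the Peyr\'e-type bound $\W_p^p(P_\vv\mu,\mathfrak{m})\lesssim\|\nn u_\vv\|_{L^p}^p$ obtained from the Benamou--Brenier / Dacorogna--Moser linear interpolation $\rho_s=(1-s)+s\,\rho_{P_\vv\mu}$ yields the energy $\int_0^1\int|\nn u_\vv|^p\rho_s^{1-p}\,\d x\,\d s$, whose time integral $\int_0^1(1-s)^{1-p}\d s$ diverges precisely for $p\ge 2$ unless $\rho_{P_\vv\mu}$ has a uniform lower bound. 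But $\inf\rho_{P_\vv\mu}$ decays (essentially like $\e^{-c/\vv}$) as $\vv\to 0$, so the qualitative observation that the density is ``strictly positive after mollification'' does not close the gap: some additional structural argument is needed (and is supplied in \cite{HMT,eWZ}) to make the estimate uniform in $\vv$ for all $p>1$.

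Your sketch of the lower bound \eqref{W1L} is in the right spirit (test the duality against a truncated/regularized version of $u_\vv$ so that the Lipschitz constraint is enforced, and control the truncation defect by a higher moment of $\nn u_\vv$), but it is left at the level of an outline: a na\"ive Chebyshev bound gives only $|\{|\nn P_\vv u_\vv|>\kappa\}|\le\kappa^{-4}\|\nn u_\vv\|_{L^4}^4$, and it is not explained how the error incurred by replacing $P_\vv u_\vv/\kappa$ with its $1$-Lipschitz modification on this exceptional set is itself bounded by $\kappa^{-3}\|\nn u_\vv\|_{L^4}^4$ (the McShane extension controls the Lipschitz constant but not, by itself, the magnitude of the modification). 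This can be made to work, but as written the ``delicate point'' you flag is exactly where the proof lives, and the sketch does not resolve it.
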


In the sequel, for each $m\in\mathbb{N}$, let $\mathcal{S}_m$ denote the group of all permutations of the set $\{1,2,\cdots,m\}$.

\section{Upper bounds}
With \eqref{WPU} in hand, in order to prove the upper bound in Theorem \ref{TH2}, it suffices to estimate the upper bound of $\|\nabla u_\vv\|_{L^p(\T^d)}$, where for every $\vv>0$, $u_\vv$ is the solution to the following regularized Poisson's equation, i.e.,
\begin{equation}\label{PoisE-H}
-\Delta u=P_\varepsilon(\mu_t^{B,H}-\mathfrak{m}).
\end{equation}
To this end, we establish first the following moment estimate for the subordinator,  which may be interesting in its own right.
\begin{lem}\label{SDU}
Let 
$S^B$ be a subordinator with Bernstein function $B$ given by \eqref{LK}.  Assume that $\nu(\d y)\ge c y^{-1-\aa}\,\d y$ for some constants $c>0$ and $\aa\in(0,1)$. Then for any $\dd>0$, there exists some constant $c_1>0$ depending only on $c,\alpha,\delta$ such that, for any $t>0$ and any $\ll>0$,
\begin{align*}\E[\e^{-\ll (S_t^B)^\dd}]\leq
\begin{cases}
e\cdot\exp\Big(-c_1\ll^{\ff {\aa}{(1-\dd)\aa+\dd}}t^{\ff{\dd}{(1-\dd)\aa+\dd}}\Big),\quad &
\dd\in(0, 1],\\
e\cdot\exp\Big(-c_1\ll^{\ff {\aa}{(1-\dd)\aa+\dd^2}}t^{\ff{\dd}{(1-\dd)\aa+\dd^2}}\Big),\quad &\dd\in
(1,\infty).
\end{cases}
\end{align*}
\end{lem}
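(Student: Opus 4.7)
The plan is to prove the bound via a Chernoff-type small-ball estimate for $S_t^B$, followed by an optimized splitting of $\E[\e^{-\lambda(S_t^B)^\delta}]$ at a carefully chosen threshold. The first step is to convert the hypothesis $\nu(\d y) \ge c y^{-1-\alpha}\,\d y$ into a quantitative lower bound on the Bernstein function: plugging into \eqref{LK} and using the scaling $u = \lambda y$ gives
$$B(\lambda) \ge c\int_0^\infty (1-\e^{-\lambda y}) y^{-1-\alpha}\,\d y = c_0 \lambda^\alpha, \qquad \lambda > 0,$$
for $c_0 = c\int_0^\infty (1-\e^{-u})u^{-1-\alpha}\,\d u \in (0,\infty)$. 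This is the only quantitative property of $B$ that will be needed.

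Next, I would derive the small-ball estimate for $S_t^B$. By Markov's inequality applied to $\e^{-\mu S_t^B}$ and the Laplace identity \eqref{LT}, for any $\mu, R > 0$,
$$\P(S_t^B \le R) \le \e^{\mu R}\E[\e^{-\mu S_t^B}] = \exp(\mu R - tB(\mu)) \le \exp(\mu R - c_0 t\mu^\alpha).$$
Optimizing over $\mu > 0$ (the minimum is attained at $\mu_* = (\alpha c_0 t/R)^{1/(1-\alpha)}$) yields
$$\P(S_t^B \le R) \le \exp\!\Big(-C_1 t^{\frac{1}{1-\alpha}} R^{-\frac{\alpha}{1-\alpha}}\Big).$$

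Finally, splitting $\E[\e^{-\lambda(S_t^B)^\delta}]$ according to $\{S_t^B \le R\}$ vs.\ $\{S_t^B > R\}$ (using that $s \mapsto s^\delta$ is increasing, hence $\e^{-\lambda(S_t^B)^\delta} \le \e^{-\lambda R^\delta}$ on the second event) gives
$$\E[\e^{-\lambda(S_t^B)^\delta}] \le \exp\!\Big(-C_1 t^{\frac{1}{1-\alpha}} R^{-\frac{\alpha}{1-\alpha}}\Big) + \e^{-\lambda R^\delta}.$$
For $\delta \in (0,1]$, balancing the two exponents by choosing $R \asymp t^{1/A} \lambda^{-(1-\alpha)/A}$ with $A = (1-\delta)\alpha + \delta$ equates both terms to $\exp(-C_2\lambda^{\alpha/A}t^{\delta/A})$, matching the stated bound after absorbing the factor $2$ via $2\e^{-x} \le e\cdot\e^{-x/2}$. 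For $\delta > 1$ the same scheme applies, but a more delicate argument is needed: one can replace the crude pointwise bound on $\{S_t^B > R\}$ by the sharper inequality $(S_t^B)^\delta \ge R^{\delta-1}S_t^B$ and then control $\E[\e^{-\lambda R^{\delta-1}S_t^B}] = \e^{-tB(\lambda R^{\delta-1})} \le \e^{-c_0 t(\lambda R^{\delta-1})^\alpha}$; balancing this against the small-ball estimate and optimizing over $R$ produces the claimed exponent with denominator $(1-\delta)\alpha+\delta^2$. The main obstacle is the careful bookkeeping of constants through the two nested optimizations, particularly in the $\delta > 1$ regime where the optimal $R$ scales differently in $\lambda$ and $t$ and the exponents are no longer symmetric; these are the elementary but lengthy computations deferred to the appendix.
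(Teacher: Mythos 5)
Your proposal takes a genuinely different route from the paper and is correct; in fact, carried out fully it yields a strictly stronger conclusion for $\delta>1$. The paper argues infinitesimally: bound $Lg$ for $g(x)=e^{-\lambda x^\delta}$, pass to $h'(s)\le -C_1\psi(h(s))$ for $h(s)=\mathbb{E}[g(S_s^B)]$ via Dynkin's formula and Jensen (convexity of $\psi$, checked in the appendix), and invert the comparison ODE through the antiderivative $G$ — a $\lambda$-explicit refinement of the Deng--Schilling--Song scheme. You instead prove a Chernoff small-ball estimate $\mathbb{P}(S_t^B\le R)\le\exp(-C_1 t^{1/(1-\alpha)}R^{-\alpha/(1-\alpha)})$ from Markov's inequality applied to $e^{-\mu S_t^B}$ together with $B(\mu)\ge c_0\mu^\alpha$, then split at the threshold $R$ and balance. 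Two remarks on your write-up. First, the refinement you propose for $\delta>1$ is unnecessary: the crude bound $e^{-\lambda R^\delta}$ on $\{S_t^B>R\}$, balanced against the small-ball estimate by $R\asymp t^{1/A}\lambda^{-(1-\alpha)/A}$ with $A:=(1-\delta)\alpha+\delta$, already gives $\mathbb{E}[e^{-\lambda(S_t^B)^\delta}]\le 2\exp(-C\lambda^{\alpha/A}t^{\delta/A})$ for \emph{every} $\delta>0$; and using $(S_t^B)^\delta\ge R^{\delta-1}S_t^B$ with $\mathbb{E}[e^{-\lambda R^{\delta-1}S_t^B}]\le e^{-c_0t\lambda^\alpha R^{(\delta-1)\alpha}}$ changes nothing, since the identity $(\delta-1)\alpha+\alpha/(1-\alpha)=\alpha A/(1-\alpha)$ shows it produces the same balanced $R$ and the same exponent. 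Second, and as a consequence, your method does \emph{not} reproduce the stated denominator $(1-\delta)\alpha+\delta^2$ when $\delta>1$; the optimal $R$ scales in $(t,\lambda)$ exactly as for $\delta\le 1$ (there is no asymmetry to worry about), and the denominator you actually obtain is the strictly smaller $A=(1-\delta)\alpha+\delta$. Since $A<(1-\delta)\alpha+\delta^2$ for $\delta>1$, your bound implies the stated one on $\{\lambda^\alpha t^\delta\ge 1\}$, and on the complement the stated inequality holds trivially (its right-hand side exceeds $1$ once $c_1\le 1$). The denominator $(1-\delta)\alpha+\delta^2$ in the lemma is thus an artifact of the particular lower bound $(1+s)^{1/\delta}-s^{1/\delta}\ge\delta^{-1}(1+s)^{(1-\delta)/\delta}$ the paper uses to estimate $G$, not a genuine loss; the Chernoff route is both shorter and sharper here.
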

\begin{rem}
The explicit dependence of the parameter $\ll$  is crucial for our purpose; see the proof of Theorem \ref{TH2} below. As for the case when $\delta\in(0,1]$, the above estimate improves the one in \cite[Theorem 2.1]{DSS}, the latter of which was applied effectively to study ergodic properties of subordinated Markov processes (see \cite[Theorem 1.1]{DSS}). The estimate is sharp in the sense that, when $\dd=1$, the powers of $\ll$ and $t$ coincides with \eqref{LT} by taking $B(\ll)=\ll^\aa, \aa\in(0,1)$.
\end{rem}
\begin{proof}[Proof of Lemma \ref{SDU}] We give a detailed proof for the case when $\delta>1$. The proof of the case when $\delta\in(0,1]$ is similar; see also \cite[pages 168--170]{DSS}.

(i) Without loss of generality, we may assume that the subordinator $(S_t^B)_{t\geq0}$ has no drift part, i.e., the infinitesimal generator  of $(S_t^B)_{t\geq0}$, denoted by $L$, is given by
$$L g(x)=\int_0^\infty[g(x+y)-g(x)]\,\nu(\d y),\quad g\in C_b^1(\R),\,x\in\R.$$
Let $\dd>1$ and $\ll>0$. Set
$$g(x):=\e^{-\ll x^\dd},\quad x\ge 0.$$
Since $\nu(\d y)\ge cy^{-1-\aa}\,\d y$ and $\alpha\in(0,1)$, by the elementary inequalities
$$1-\e^{-r}\ge\ff{\e-1}{\e}\min\{1, r\},\quad (1+r)^\dd\ge 1+
r^\delta,\quad\quad r\ge0,\, \dd>1,$$
we have
\begin{equation}\begin{split}\label{LU}
Lg(x)&=\int_0^\infty \Big(\e^{-\ll(x+y)^\dd}-\e^{-\ll x^\dd}\Big)\,\nu(\d y)\cr
&\le cx^{-\aa}\e^{-\ll x^\dd}\int_0^\infty\Big(\e^{-\ll x^\dd[(1+z)^\dd-1]}-1\Big)\,\ff{\d z}{z^{1+\aa}}\cr
&\le\ff{c(1-\e)}{\e}x^{-\aa}\e^{-\ll x^\dd}\int_0^{(1+\ll^{-1}x^{-\dd})^{1/\dd}-1}\ff{\ll x^{\dd}[(1+z)^\dd-1]}{z^{1+\aa}}\,\d z\cr
&\le\ff{c\ll(1-\e)}{\e}x^{\dd-\aa}\e^{-\ll x^\dd}\int_0^{(1+\ll^{-1}x^{-\dd})^{1/\dd}-1}z^{\dd-\aa-1}\,\d z\cr
&=-C_1\psi(g(x)),
\end{split}\end{equation}
where $C_1:=\ff{c\ll^{\aa/\dd}(\e-1)}{\e(\dd-\aa)}$ and
$$\psi(u):=u\left[(1-\log u)^{1/\dd}-(-\log u)^{1/\dd}\right]^{\dd-\aa},\quad 0<u\le 1.$$

According to Dynkin's formula and \eqref{LU}, we have for every $s\in [0,t]$,
\begin{equation*}\begin{split}
\E[g(S_t^B)]-\E[g(S_s^B)]
\le-C_1\E\Big\{\int_s^t\psi\big((g(S_u^B)\big)\,\d u\Big\}\le-C_1\int_s^t\psi\big(\E g(S_u^B)\big)\,\d u,
\end{split}\end{equation*}
where the last inequality is due to the fact that $\psi$ is convex on $(0,1]$ (see Lemma \ref{C} in Appendix). Let
$$h(t):=\E g(S_t^B),\quad t\ge 0.$$
Then
$$\ff{h(t)-h(s)}{t-s}\le -\ff{C_1}{t-s}\int_s^t \psi(h(u))\,\d u,\quad 0\le s\le t.$$
Noticing that $h$ is absolutely continuous on $[0,\infty)$, we derive from the last inequality that
$$h'(s)\le-C_1\psi(h(s)),\quad \mbox{a.e. }s\ge 0.$$
Applying \cite[Lemma 2.1]{DSS} or \cite[Lemma 5]{SW}, by the fact that $\psi$ is increasing on $(0,1]$ (see also Lemma \ref{C}), we have
$$G(h(t))\le G(1)-C_1 t,\quad t\ge 0,$$
where
$$G(v):=-\int_v^1\ff{\d u}{\psi(u)},\quad 0< v\le 1.$$
It is easy to see that $G$ is strictly increasing with $\lim_{r\rightarrow0^+}G(r)=-\infty$ and $G(1)=0$. Indeed, by the mean value theorem,
$$(1-\log u)^{1/\dd}-(-\log u)^{1/\dd}\le\dd^{-1}(-\log u)^{1/\dd-1},\quad u\in(0,1],$$
and hence, by the change-of-variables formula,
\begin{equation*}\begin{split}
\int_v^1\ff{\d u}{\psi(u)}&\ge\dd^{\dd-\aa}\int_v^1\ff{\d u}{u(-\log u)^{(1/\dd-1)(\dd-\aa)}}
=\frac{\dd^{\dd-\aa}(-\log v)^{1-(1/\dd-1)(\dd-\aa)}}{1-(1/\dd-1)(\dd-\aa)}
\rightarrow\infty,\quad v\rightarrow0^+,
\end{split}\end{equation*}
where the last line is due to that $(1/\dd-1)(\dd-\aa)<0$ for any $\dd>1$. Thus, we arrive at
\begin{equation}\label{ii-1}
h(t)\le G^{-1}(G(1)-C_1t)=G^{-1}(-C_1t),\quad t\ge 0,
\end{equation}
where $G^{-1}$ denotes the inverse function of $G$.

Next, we give a lower bound on $G(v)$. By the change-of-variables formula,
\begin{equation*}\begin{split}
G(v)&=-\int_v^1 \frac{[(1-\log u)^{1/\dd}-(-\log u)^{1/\dd}]^{\aa-\dd}}{u}\,\d u\\
&=-\int_0^{-\log v}[(1+s)^{1/\dd}-s^{1/\dd}]^{\aa-\dd}\,\d s,\quad 0<v\le 1.
\end{split}\end{equation*}
Since $\dd>1$,  we have for any $s\ge 0$,
$$(1+s)^{1/\dd}-s^{1/\dd}=\ff 1 {\dd}\int_s^{1+s} u^{(1-\dd)/\dd}\,\d u\ge\ff 1 {\dd}(1+s)^{(1-\dd)/\dd}.$$
This implies that, for any $v\in(0,1]$,
\begin{equation*}\begin{split}
\int_0^{-\log v}[(1+s)^{1/\dd}-s^{1/\dd}]^{\aa-\dd}\,\d s &\le \int_0^{-\log v}\Big[\ff 1 {\dd}(1+s)^{\ff{1-\dd}\dd}\Big]^{\aa-\dd}\,\d s\\
&=C_2\Big[(1-\log v)^{\ff{\aa-\dd\aa+\dd^2}{\dd}}-1\Big],
\end{split}\end{equation*}
where $C_2:=\ff{\dd^{1+\delta-\alpha}}{(1-\dd)\aa+\dd^2}>0$. Thus, for any $v\in(0,1]$,
\begin{equation}\label{G-lbd}
G(v)\ge -C_2\Big[(1-\log v)^{\ff{\aa-\dd\aa+\dd^2}{\dd}}-1\Big].
\end{equation}

Therefore, by \eqref{ii-1} and \eqref{G-lbd}, we have
\begin{equation*}\begin{split}
\E g(S_t^B)&\le\exp\Big[1-\Big(\ff{C_1}{C_2}t+1\Big)^{\ff{\dd}{(1-\dd)\aa+\dd^2}}\Big]
\le \e\cdot\exp\Big(-C_3\ll^{\ff{\aa}{(1-\dd)\aa+\dd^2}}t^{\ff{\dd}{(1-\dd)\aa+\dd^2}}\Big),
\end{split}\end{equation*}
for some constant $C_3>0$ depending only on $c,\alpha,\delta$.

(ii) 
Let $\dd\in(0,1]$ and $\ll>0$. Define
$$f(x)=\e^{-\ll x^\dd},\quad x\ge 0,$$
and
$$\varphi(u)=u\Big[(1-\log u)^{1/\dd}-(-\log u)^{1/\dd}\Big]^{-\aa},\quad 0<u\le 1.$$
Let
$$F(v)=-\int_v^1\ff{\d u}{\varphi(u)},\quad 0<v\le 1,$$
and let $F^{-1}$ denote the inverse function of $F$. It is clear that $F$ is strictly increasing on $(0,1]$, $\lim_{r\to 0^+}F(r)=-\infty$ and
$F(1)=0$. In fact, by the change-of-variables formula,
\begin{equation*}
\int_v^1\frac{\d u}{\varphi(u)}=\int_0^{-\log v}\Big[(1+r)^{1/\delta}-r^{1/\delta}\Big]^\aa\,\d r\geq-\log v\rightarrow+\infty,\quad v\rightarrow 0^+.
\end{equation*}
By the proof of \cite[Theorem 2.1]{DSS}, we have
\begin{equation}\label{1SDU}
\E[f(S_t^B)]\le F^{-1}(-C_4 t),\quad t\ge 0.
\end{equation}
where $C_4:=c(1-\e^{-1})\aa^{-1} \ll^{\aa/\dd}$.

Moreover, it is easy to see that
$$F(v)=-\int_0^{-\log v}\Big[(1+s)^{1/\delta}-s^{1/\delta}\Big]^\aa\,\d s,\quad v\in(0,1].$$
Since
$$
(1+s)^{1/\dd}-s^{1/\dd}=\ff 1 {\dd}\int_s^{1+s} u^{(1-\dd)/\dd}\,\d u\le\ff 1 {\dd}(1+s)^{(1-\dd)/\dd},\quad s\geq0,
$$
we have
\begin{equation*}\begin{split}
\int_0^{-\log v}\Big[(1+s)^{1/\dd}-s^{1/\dd}\Big]^\aa\d s&\le\ff 1 {\dd^\aa}\int_0^{-\log v}(1+s)^{\ff{(1-\dd)\aa}{\dd}}\,\d s\\
&=\ff{\dd^{1-\aa}}{(1-\dd)\aa+\dd}\Big[(1-\log v)^{\ff{(1-\dd)\aa+\dd}{\dd}}-1\Big],\quad v\in(0,1].
\end{split}\end{equation*}
Hence
\begin{equation}\label{2SDU}
F(v)\ge-\ff{\dd^{1-\aa}}{(1-\dd)\aa+\dd}\Big[(1-\log v)^{\ff{(1-\dd)\aa+\dd}{\dd}}-1\Big],\quad v\in(0,1].
\end{equation}
Letting
$$C_4 t=\ff{\dd^{1-\aa}}{(1-\dd)\aa+\dd}\Big[(1-\log v)^{\ff{(1-\dd)\aa+\dd}{\dd}}-1\Big],$$
we obtain that
$$v=\exp\Big[1-\Big(\ff{C_4[(1-\dd)\aa+\dd]}{\dd^{1-\aa}}t+1\Big)^{\ff{\dd}{(1-\dd)\aa+\dd}}\Big].$$
According to \eqref{2SDU} and the monotonicity of $F$, we arrive at
\begin{equation}\label{G-upper}
F^{-1}(-C_4t)\le\exp\Big[1-\Big(\ff{C_4[(1-\dd)\aa+\dd]}{\dd^{1-\aa}}t+1\Big)^{\ff{\dd}{(1-\dd)\aa+\dd}}\Big],\quad t>0.
\end{equation}

Combining \eqref{G-upper} and \eqref{1SDU} together, we find some constant $C_5>0$ such that
$$\E[f(S_t^B)]\le e\cdot\exp\left[-C_5\ll^{\ff {\aa}{(1-\dd)\aa+\dd}}t^{\ff{\dd}{(1-\dd)\aa+\dd}}\right],\quad t>0,$$
where $C_5$ depends only on $\aa,\dd,c$.
\end{proof}

The moment estimate for the subordinator plays an important role in proving the following lemma.
\begin{lem}\label{LTU}
Under the Assumptions in Theorem \ref{TH2}, for any $p\in \mathbb{N}$ and any $\xi_1,\cdots,\xi_p\in\mathbb{Z}^{d}$,
$$\Big|\E\Big[\prod_{j=1}^p\widehat{\mu_t^{B,H}}(\xi_j)\Big]\Big|\lesssim \ff 1 {t^p}\sum_{\si\in\mathcal{S}_p}\prod_{j=1}^p\min\Big\{\ff 1
{|\sum_{i=j}^p\xi_{\si_i}|^{\aa/H}},t\Big\},\quad t>0.$$
\end{lem}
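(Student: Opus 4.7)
The plan is to expand each Fourier transform by its definition and integrate,
$$\E\Big[\prod_{j=1}^p\widehat{\mu_t^{B,H}}(\xi_j)\Big]=\ff{1}{t^p}\int_{[0,t]^p}\E\Big[\prod_{j=1}^p e^{-2\pi\i\<\xi_j,X_{s_j}^{B,H}\>}\Big]\,\d s_1\cdots\d s_p,$$
then partition $[0,t]^p$ according to the permutation $\sigma\in\mathcal{S}_p$ that orders the coordinates $s_j$, and change variables to the ordered times $u_k=s_{\sigma(k)}$, obtaining
$$\E\Big[\prod_{j=1}^p\widehat{\mu_t^{B,H}}(\xi_j)\Big]=\ff{1}{t^p}\sum_{\sigma\in\mathcal{S}_p}\int_{0<u_1<\cdots<u_p<t}\E\Big[\prod_{k=1}^p e^{-2\pi\i\<\xi_{\sigma(k)},X_{u_k}^{B,H}\>}\Big]\,\d u.$$

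Next I condition on the subordinator path $S^B$ and set $\tau_k:=S^B_{u_k}$, $\tau_0:=0$. Rewriting the linear combination through fBM increments,
$$\sum_{k=1}^p\<\xi_{\sigma(k)},X_{\tau_k}^H\>=\sum_{k=1}^p\<\eta_k^\sigma,X_{\tau_k}^H-X_{\tau_{k-1}}^H\>,\qquad \eta_k^\sigma:=\sum_{j=k}^p\xi_{\sigma(j)},$$
and applying the Gaussian characteristic function I obtain
$$\Big|\E\Big[\prod_{k=1}^p e^{-2\pi\i\<\xi_{\sigma(k)},X_{u_k}^{B,H}\>}\,\Big|\,S^B\Big]\Big|=\exp\Big(-2\pi^2\var\Big(\sum_k\<\eta_k^\sigma,X_{\tau_k}^H-X_{\tau_{k-1}}^H\>\,\Big|\,S^B\Big)\Big).$$
For $H=1/2$ the conditional variance equals $\sum_k|\eta_k^\sigma|^2(\tau_k-\tau_{k-1})$ exactly, by independence of Brownian increments. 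For $H\neq 1/2$, I invoke Pitt's strong local nondeterminism of fBM (applied to each of the $d$ independent coordinates) to produce a universal constant $c>0$ with
$$\var\Big(\sum_{k=1}^p\<\eta_k^\sigma,X_{\tau_k}^H-X_{\tau_{k-1}}^H\>\Big)\ge c\sum_{k=1}^p|\eta_k^\sigma|^2(\tau_k-\tau_{k-1})^{2H}.$$

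Taking expectation over $S^B$ and using its independent stationary increments, the exponential factorises as $\prod_k\E[\exp(-c|\eta_k^\sigma|^2(S_{u_k-u_{k-1}}^B)^{2H})]$. When $H=1/2$, formula \eqref{LT} turns each factor into $\exp(-(u_k-u_{k-1})B(2\pi^2|\eta_k^\sigma|^2))$, and \eqref{bound-B} together with the lattice fact that $|\eta|\ge 1$ whenever $\eta\neq 0$ gives $B(2\pi^2|\eta|^2)\gtrsim|\eta|^{2\aa}=|\eta|^{\aa/H}$. When $H\neq 1/2$, I apply Lemma \ref{SDU} with $\dd=2H$; a direct computation in both sub-cases $\dd\in(0,1]$ and $\dd>1$ shows that the ratio of the exponents on $|\eta_k^\sigma|$ and on $u_k-u_{k-1}$ in the resulting bound is always exactly $\aa/H$.

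Finally, substituting $v_k=u_k-u_{k-1}$ and enlarging the simplex to a product of intervals, the inner integral is at most
$$\prod_{k=1}^p\int_0^t\exp(-c'|\eta_k^\sigma|^{a}v^{b})\,\d v\lesssim\prod_{k=1}^p\min\big\{t,|\eta_k^\sigma|^{-a/b}\big\}=\prod_{k=1}^p\min\big\{t,|\eta_k^\sigma|^{-\aa/H}\big\},$$
with the convention $|\eta_k^\sigma|^{-\aa/H}=\infty$ when $\eta_k^\sigma=0$, so the $t$ term absorbs the trivial case. Summing over $\sigma\in\mathcal{S}_p$ yields the claim. The principal difficulty is the $H\neq 1/2$ case: Pitt's strong local nondeterminism is required to diagonalise the intrinsically non-diagonal fBM covariance so that the subordinator's independent increments can be used; the subsequent bookkeeping — verifying that the two regimes of Lemma \ref{SDU} conspire to produce the common exponent $\aa/H$ — is what makes the estimate match the convergence rates claimed in Theorem \ref{TH2}.
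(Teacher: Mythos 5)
Your proposal is correct and follows essentially the same route as the paper: expand and partition $[0,t]^p$ into simplices, rewrite the phase in terms of increments $\eta_k^\sigma=\sum_{i\ge k}\xi_{\sigma_i}$, use the exact subordinated-BM characteristic function via \eqref{LT} and \eqref{bound-B} when $H=1/2$, and use strong local nondeterminism of fBM (the paper cites Xiao's \eqref{4LTU}, which is Pitt's estimate) together with Lemma \ref{SDU} with $\delta=2H$ when $H\neq1/2$, finally enlarging the simplex integral to a product of one-dimensional integrals. Your bookkeeping confirming that both regimes of Lemma \ref{SDU} produce the common exponent ratio $\alpha/H$, and your handling of vanishing partial sums via the $t$-truncation, match the paper's argument.
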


\begin{proof} We divide the proof into two parts.

\underline{\textbf{Part 1}}. We first consider the case when $H=1/2$ and $B\in\textbf{B}^\aa$ for some $\aa\in[0,1]$.

Let $t>0$ and $p\in \mathbb{N}$.  Then for any $j=1,2,\cdots, p$, we have
$$\widehat{\mu_t^B}(\xi_j)=\ff 1 t \int_0^t\exp(-2\pi {\rm i}\<\xi_j,X_s^B\>)\,\d s,\quad \xi_j\in\mathbb{Z}^d.$$
Hence
\begin{equation}\begin{split}\label{1LTU}
\prod_{j=1}^p\widehat{\mu_t^B}(\xi_j)
&=\ff 1 {t^p}\prod_{j=1}^p\Big(\int_0^t\exp(-2\pi {\rm i}\<\xi_j,X_s^B\>)\,\d s\Big)\\
&=\ff 1 {t^p}\sum_{\sigma\in \mathcal{S}_p}\int_{\Delta_\sigma}\exp\Big(-2\pi {\rm i}\sum_{j=1}^p \<\xi_j,X_{t_j}^B\>\Big)\,\d t_1 \d t_2\cdots \d t_p,
\end{split}\end{equation}
where for each $\sigma\in\mathcal{S}_p$,
$$\Delta_\sigma:=\{(t_{\sigma_1},t_{\sigma_2},\cdots,t_{\sigma_p})\in\R^p:\ 0\le t_{\sigma_1}\le t_{\sigma_2}\le\cdots\le t_{\sigma_p}\le t\}.$$
In other words, we divide $[0,t]^p$ into $p!$ many distinct simplexes.

Let $t_{\sigma_0}=0$ and $(t_{\sigma_1},t_{\sigma_2},\cdots,t_{\sigma_p})\in\Delta_\sigma$.
Noting that $(X_t^B)_{t\ge 0}$ has independent increments and $X_t^B-X_s^B$ and $X^B_{t-s}$ have the same distribution for every $t\geq s\geq0$, we arrive at
\begin{equation*}\begin{split}
\E\Big[\exp(-2\pi {\rm i}\<\xi_j,X_{t_j}^B\>)|X_{t_{j-1}}^B\Big]&=\e^{-2\pi{\rm i}\<\xi_j,X_{t_{j-1}}^B\>}\E\Big[-2\pi{\rm i}\<\xi_j,X_{t_j}^B-X_{t_{j-1}}^B\>|X_{t_{j-1}}^B\Big]\\
&=\e^{-2\pi{\rm i}\<\xi_j,X_{t_{j-1}}^B\>}\E\Big[-2\pi{\rm i}\<\xi_j,X_{t_j-t_{j-1}}^B\>\Big],\quad j=1,2,\cdots,p.
\end{split}\end{equation*}
Then
\begin{equation}\begin{split}\label{01LTU}
\E\Big[\exp\Big(-2\pi {\rm i }\sum_{j=1}^p\<\xi_j,X_{t_j}^B\>\Big)\Big]
&=\prod_{j=1}^p\E\Big[\exp\Big(-2\pi{\rm i}\Big\<X_{t_{\si_j}-t_{\si_{j-1}}}^B,\sum_{i=j}^p\xi_{\si_i}\Big\>\Big)\Big].
\end{split}\end{equation}
Note that, by the independence, the property of Brownian motion and \eqref{LT},
\begin{equation}\begin{split}\label{2LTU}
\E\Big[\exp\Big(-2\pi{\rm i}\Big\<X_{t_{\si_j}-t_{\si_{j-1}}}^B,\sum_{i=j}^p\xi_{\si_i}\Big\>\Big)\Big]
&=\E\Big[\exp\Big(-2\pi^2\Big|\sum_{i=j}^p\xi_{\si_i}\Big|^2S_{t_{\si_j}-t_{\si_{j-1}}}^B\Big)\Big]\\
&=\exp\Big[-B\Big(2\pi^2\Big|\sum_{i=j}^p\xi_{\si_i}\Big|^2\Big)(t_{\si_j}-t_{\si_{j-1}})\Big].
\end{split}\end{equation}
 Hence
\begin{equation*}\begin{split}
\E\Big[\exp\Big(-2\pi {\rm i }\sum_{j=1}^p\<\xi_j,X_{t_j}^B\>\Big)\Big]
=\prod_{j=1}^p\exp\Big[-B\Big(2\pi^2\Big|\sum_{i=j}^p\xi_{\si_i}\Big|^2\Big)(t_{\si_j}-t_{\si_{j-1}})\Big].
\end{split}\end{equation*}
By integrating both sides of the above equality over $\Delta_\si$, we have
\begin{equation}\begin{split}\label{3LTU}
&\int_{\Delta_\si}\E\Big[\exp\Big(-2\pi {\rm i}\sum_{j=1}^p\<\xi_j,X_{t_j}^B\>\Big)\Big]\,\d t_1\d t_2 \cdots \d t_{p}\\
&=\int_{\Delta_\si}\exp\Big[-\sum_{j=1}^p B\Big(2\pi^2\Big|\sum_{i=j}^p\xi_{\si_i}\Big|^2\Big)(t_{\si_j}-t_{\si_{j-1}})\Big]\,\d t_{\si_1}\d t_{\si_2} \cdots \d t_{\si_p}\\
&\le\int_{[0,t]^p}\exp\Big[-\sum_{j=1}^pB\Big(2\pi^2\Big|\sum_{i=j}^p\xi_{\si_i}\Big|^2\Big)s_j\Big]\,\d s_1\d s_2\cdots \d s_p\\
&\leq \prod_{j=1}^p\min\Big\{\ff{1}{B(2\pi^2|\sum_{i=j}^p\xi_{\si_i}|^2)},t\Big\}.
\end{split}\end{equation}

Thus, according to \eqref{1LTU} and \eqref{3LTU}, by employing \eqref{bound-B}, we have
\begin{equation}\begin{split}\label{3LTU'}
\Big|\E\Big[\prod_{j=1}^p\widehat{\mu_t^B}(\xi_j)\Big]\Big|
&\lesssim\ff 1 {t^p}\sum_{\si\in \mathcal{S}_p}\prod_{j=1}^p\min\Big\{\ff{1}{|\sum_{i=j}^p\xi_{\si_i}|^{2\aa}},t\Big\},
\end{split}\end{equation}
which proves the desired result when $H=1/2$.

\medskip

\underline{\textbf{Part 2}}. It remains to discuss the case when $H\neq 1/2$ under the condition in Theorem \ref{TH2}(ii).

It is clear that the process $(X_t^{B,H})_{t\ge 0}$ no longer has independent increments. So the equality \eqref{01LTU} is unavailable. To overcome the difficulty, we employ the local non-determinism of fBM $X^H$ introduced in \cite[Section 2.1]{X}, i.e.,
\begin{equation}\label{4LTU}
{\rm Cov}(X_{t_1}^H-X_{t_0}^H,X_{t_2}^H-X_{t_1}^H,\cdots,X_{t_p}^H-X_{t_{p-1}}^H)\gtrsim {\rm diag}(|t_1-t_0|^{2H},|t_2-t_1|^{2H},\cdots,|t_p-t_{p-1}|^{2H}),
\end{equation}
for any $0\le t_0< t_1<\cdots<t_p\le t$, where the left hand side is the $p\times p$ covariance matrix and the right hand side is the $p\times p$ diagonal matrix.  Then by \eqref{4LTU}, the independence of $(X_t^H)_{t\ge 0}$ and $(S_t^B)_{t\ge 0}$ and Lemma \ref{SDU}, we find some constants $C_1,C_2>0$ such that, for any $H\in(0,1/2)$,
\begin{equation}\begin{split}\label{5LTU}
&\int_{\Delta_\si}\E\Big[\exp\Big(-2\pi {\rm i}\sum_{j=1}^p\<\xi_j,X_{t_j}^{B,H}\>\Big)\Big]\,\d t_1\d t_2 \cdots \d t_{p}\\
&\le\int_{\Delta_\si}\E\exp\Big[-C_1\sum_{j=1}^p\Big|\sum_{i=j}^p\xi_{\si_i}\Big|^2\Big(S_{t_{\si_j}}^B-S_{t_{\si_{j-1}}}^B\Big)^{2H}\Big]\,\d t_{\si_1}\d t_{\si_2} \cdots \d t_{\si_p}\\
&\lesssim \int_{\Delta_\si}\prod_{j=1}^p \exp\Big(-C_2\Big|\sum_{i=j}^p\xi_{\si_i}\Big|^{\ff{2\aa}{(1-2H)\aa+2H}}(t_{\si_j}-t_{\si_{j-1}})^{\ff{2H}{(1-2H)\aa+2H}}\Big)\,\d t_{\si_1}\d t_{\si_2} \cdots \d t_{\si_p}\\
&\lesssim \prod_{j=1}^p\int_{[0,t]}\exp\Big(-C_2\Big|\sum_{i=j}^p\xi_{\si_i}\Big|^{\ff{2\aa}{(1-2H)\aa+2H}}s_j^{\ff{2H}{(1-2H)\aa+2H}}\Big)\d s_1\d s_2\cdots \d s_p\\
&\lesssim\prod_{j=1}^p\min\Big\{\Big|\sum_{i=j}^p\xi_{\si_i}\Big|^{-\aa/H},t\Big\},
\end{split}\end{equation}
where in the equality we have used the fact that $(S_t^B)_{t\ge 0}$ has stationary, independent increments.
By the same argument as in \eqref{5LTU}, for every $H\in(1/2,1)$, we also have
\begin{equation}\begin{split}\label{6LTU}
\int_{\Delta_\si}\E\Big[\exp\Big(-2\pi {\rm i}\sum_{j=1}^p\<\xi_j,X_{t_j}^B\>\Big)\Big]\,\d t_1\d t_2 \cdots \d t_{p}
\lesssim\prod_{j=1}^p\min\Big\{\Big|\sum_{i=j}^p\xi_{\si_i}\Big|^{-\aa/H},t\Big\}.
\end{split}\end{equation}

Therefore, combining \eqref{3LTU'} with \eqref{5LTU} and \eqref{6LTU}, we finish the proof Lemma \ref{LTU}.
\end{proof}

In general, we give the upper bound on $\E[\|\nn u_\vv\|_{L^p(\T^d)}^p]$ with $p\in\N$ even. To this end, we need the following generalized version of Young's convolution inequality adapted from \cite[Lemma 3.5]{HMT}, which can be proved directly by applying H\"{o}lder's inequality and Young's convolution inequality.
\begin{lem}\label{G}
Let $p\in\mathbb{N}$, $p\ge 2$, $f_1,f_2,\cdots,f_p:\mathbb{Z}^d\to[0,\infty]$, $g_2,g_3,\cdots,g_p:\mathbb{Z}^d\to[0,\infty]$ be measurable functions and choose $\ll_i,\Lambda_j\in[1,\infty]$, $i\in\{1,2,\cdots, p\}$ and $j\in\{2,3,\cdots,p\}$ such that
\begin{align*}
\begin{cases}
\ff 1 {\ll_1}+\ff 1 {\ll_2}+\ff 1 {\Lambda_2}=2,&\\
\ff 1 {\ll_k}+\ff 1 {\Lambda_k}=1,\quad 3\le k\le p.
\end{cases}
\end{align*}
Then
$$\|T_p\|_{l^1(\mathbb{Z}^{d\times p})}\le\prod_{i=1}^p\|f_i\|_{l^{\ll_i}(\mathbb{Z}^d)}\prod_{j=2}^p\|g_j\|_{l^{\Lambda_j}(\mathbb{Z}^d)},$$
where
$$T_p(\xi_1,\cdots,\xi_p):=\prod_{i=1}^p f_i(\xi_i)\prod_{j=2}^p g_j\Big(\sum_{i=1}^j\xi_i\Big),\quad (\xi_1,\cdots,\xi_p)\in(\mathbb{Z}^d)^p.$$
\end{lem}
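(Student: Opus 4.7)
The plan is to reduce $\|T_p\|_{l^1(\Z^{d\times p})}$ to a chain of one-dimensional convolutions on $\Z^d$, and then peel off factors one at a time through alternating applications of H\"older's inequality and Young's convolution inequality.

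First, I would perform the bijective linear change of indices $\eta_j:=\sum_{i=1}^j\xi_i$ for $j=1,\ldots,p$, so that $\xi_1=\eta_1$ and $\xi_j=\eta_j-\eta_{j-1}$ for $j\ge 2$. Under this substitution,
$$T_p(\xi_1,\ldots,\xi_p)=f_1(\eta_1)\prod_{j=2}^{p}g_j(\eta_j)\,f_j(\eta_j-\eta_{j-1}),$$
and summation over $(\xi_1,\ldots,\xi_p)\in(\Z^d)^p$ is equivalent to summation over $(\eta_1,\ldots,\eta_p)\in(\Z^d)^p$. I would then introduce the auxiliary sequence
$$F_1(\eta_1):=f_1(\eta_1),\qquad F_k(\eta_k):=g_k(\eta_k)(F_{k-1}*f_k)(\eta_k),\quad k=2,\ldots,p,$$
where $(F_{k-1}*f_k)(\eta_k):=\sum_{\eta_{k-1}\in\Z^d}F_{k-1}(\eta_{k-1})f_k(\eta_k-\eta_{k-1})$. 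A direct telescoping expansion shows the identity $\|T_p\|_{l^1(\Z^{d\times p})}=\|F_p\|_{l^1(\Z^d)}$.

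Next, for each $k\ge 3$, I would invoke H\"older's inequality with the conjugate pair $(\Lambda_k,\ll_k)$, which is valid since by hypothesis $\ff{1}{\Lambda_k}+\ff{1}{\ll_k}=1$, to obtain
$$\|F_k\|_{l^1(\Z^d)}\le\|g_k\|_{l^{\Lambda_k}(\Z^d)}\,\|F_{k-1}*f_k\|_{l^{\ll_k}(\Z^d)}.$$
Young's convolution inequality on $\Z^d$, applied with the trivial exponent relation $1+\ff{1}{\ll_k}=\ff{1}{1}+\ff{1}{\ll_k}$, then yields $\|F_{k-1}*f_k\|_{l^{\ll_k}(\Z^d)}\le\|F_{k-1}\|_{l^1(\Z^d)}\|f_k\|_{l^{\ll_k}(\Z^d)}$, and hence
$$\|F_k\|_{l^1(\Z^d)}\le\|g_k\|_{l^{\Lambda_k}(\Z^d)}\|f_k\|_{l^{\ll_k}(\Z^d)}\|F_{k-1}\|_{l^1(\Z^d)},\qquad k=3,\ldots,p.$$
To close the recursion at the base case $k=2$, the same H\"older step gives $\|F_2\|_{l^1(\Z^d)}\le\|g_2\|_{l^{\Lambda_2}(\Z^d)}\|f_1*f_2\|_{l^{\Lambda_2'}(\Z^d)}$ with $\Lambda_2':=\Lambda_2/(\Lambda_2-1)$, and Young's convolution inequality supplies $\|f_1*f_2\|_{l^{\Lambda_2'}(\Z^d)}\le\|f_1\|_{l^{\ll_1}(\Z^d)}\|f_2\|_{l^{\ll_2}(\Z^d)}$ precisely under the relation $\ff{1}{\ll_1}+\ff{1}{\ll_2}=1+\ff{1}{\Lambda_2'}=2-\ff{1}{\Lambda_2}$, which is exactly the first index condition in the statement.

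Finally, multiplying all the recursive estimates together yields the claimed bound $\|T_p\|_{l^1(\Z^{d\times p})}\le\prod_{i=1}^p\|f_i\|_{l^{\ll_i}(\Z^d)}\prod_{j=2}^p\|g_j\|_{l^{\Lambda_j}(\Z^d)}$. The proof is essentially an index-tracking exercise rather than a matter of any deep inequality; the main point of care is making sure that at each level the H\"older conjugacy and the Young exponent arithmetic dovetail so that the inductive hypothesis always involves the $l^1$ norm of $F_{k-1}$, and this works out by construction from the two index relations assumed. Endpoint cases where some $\ll_i$ or $\Lambda_j$ equals $1$ or $\infty$ are absorbed by the usual conventions $\ff{1}{1}+\ff{1}{\infty}=1$ in the discrete H\"older and Young inequalities, so no separate treatment is required.
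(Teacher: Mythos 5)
Your proof is correct and follows exactly the route the paper indicates: the paper does not spell out the argument but states that Lemma \ref{G} is adapted from \cite[Lemma 3.5]{HMT} and ``can be proved directly by applying H\"older's inequality and Young's convolution inequality,'' which is precisely the alternating H\"older/Young peeling you carry out after the change of variables $\eta_j=\sum_{i=1}^j\xi_i$. The telescoping identity $\|T_p\|_{l^1}=\|F_p\|_{l^1}$, justified by Tonelli since all factors are nonnegative, the H\"older step on the pair $(\Lambda_k,\ll_k)$, the Young step with one exponent pinned at $1$ to keep $\|F_{k-1}\|_{l^1}$ in the recursion, and the special bookkeeping at $k=2$ tying $\ll_1,\ll_2,\Lambda_2$ to the relation $\tfrac1{\ll_1}+\tfrac1{\ll_2}+\tfrac1{\Lambda_2}=2$ all dovetail correctly, including the endpoint exponents by the usual conventions.
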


\begin{prp}\label{NUP}
Under the Assumptions in Theorem \ref{TH2}, if $u_\vv$ is the solution to the Poisson's equation \eqref{PoisE-H}, then for any $p\in\N$ even and large enough $t>0$,
\begin{equation}\label{UU}
\E[\|\nn u_\vv\|_{L^p(\T^d)}^p]\lesssim \vv^{\ff p 2},
\end{equation}
where
\begin{equation}\label{vv}\vv=\begin{cases}
t^{-1},\quad &d<2+ \aa/ H,\\
t^{-1}\log t,\quad & d=2+ \aa /H,\\
t^{-\ff 2 {d-\aa/ H}},\quad & d>2+\aa /H.
\end{cases}\end{equation}
\end{prp}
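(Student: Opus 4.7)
The plan is to use Fourier analysis on $\T^d$ to expand $\E[\|\nn u_\vv\|_{L^{p}(\T^d)}^{p}]$ for even $p=2n$ as a multiple sum over $(\Z^d)^{p}$, estimate the resulting expectation of products of Fourier coefficients $\widehat{\mu_t^{B,H}}(\xi_i)$ via Lemma \ref{LTU}, and then sum by means of the generalized Young convolution inequality Lemma \ref{G} together with the $\phi_\vv$-norm bounds of Lemma \ref{g}. The choice of $\vv$ in \eqref{vv} will be calibrated at the end to balance the three dimensional regimes.

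First, using the pointwise bound $|\nn u_\vv|^{2n}\le d^{n-1}\sum_{j=1}^d |\partial_j u_\vv|^{2n}$, I reduce to estimating $\E\!\int_{\T^d}(\partial_j u_\vv)^{2n}$ for each $j$. By the Parseval-type identity \eqref{FP} this integral expands as $\sum_{\xi_1+\cdots+\xi_{2n}=0}\prod_{i=1}^{2n}\widehat{\partial_j u_\vv}(\xi_i)$. From the Poisson equation \eqref{PoisE-H} combined with \eqref{GRA} and \eqref{FM}, I have, for $\xi\ne 0$,
$$\widehat{\partial_j u_\vv}(\xi)=\frac{\i\,\xi_j\,\e^{-2\pi^2\vv|\xi|^2}}{2\pi|\xi|^2}\,\widehat{\mu_t^{B,H}}(\xi),$$
which, after taking absolute values and using $|\xi_j|/|\xi|\le 1$, produces a factor $\frac{\e^{-2\pi^2\vv|\xi_i|^2}}{|\xi_i|}$ per coordinate. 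Applying Lemma \ref{LTU} to $|\E[\prod_i\widehat{\mu_t^{B,H}}(\xi_i)]|$ then introduces the symmetric sum over $\sigma\in\mathcal{S}_{2n}$ with factors $\min\{|\sum_{i=j}^{2n}\xi_{\sigma_i}|^{-\aa/H},t\}$ and an overall prefactor $t^{-2n}$.

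Next, by relabelling the summation variables according to $\sigma$ (the sum over $\{\sum\xi_i=0\}$ is invariant), all $(2n)!$ permutations contribute equally, so I fix $\sigma=\mathrm{id}$ and perform the telescoping change of variables $\eta_j:=\sum_{i=j}^{2n}\xi_i$, noting that $\eta_1=0$ and $\eta_{2n+1}=0$. The $j=1$ factor is then $\min\{|\eta_1|^{-\aa/H},t\}=t$ (for $t\ge 1$), which cancels one power of $t$ in $t^{-2n}$. Reindexing via $\tilde\eta_k=\eta_{k+1}$ and setting $\zeta_1=\tilde\eta_1,\ \zeta_k=\tilde\eta_k-\tilde\eta_{k-1}$ ($k\ge 2$), the residual sum takes the form
$$\sum_{\zeta_1,\dots,\zeta_{2n-1}\in\Z^d}\prod_{i=1}^{2n-1}f_i(\zeta_i)\,\prod_{j=2}^{2n-1}g_j\Big(\sum_{k=1}^j\zeta_k\Big),$$
to which Lemma \ref{G} directly applies. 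Here each $f_i$ is (essentially) the $\phi_\vv$ of Lemma \ref{g} up to absorbing one extra $\min\{|\cdot|^{-\aa/H},t\}$ into $f_1$ and into $g_{2n-1}$, and each interior $g_j$ equals $\min\{|\cdot|^{-\aa/H},t\}$.

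Then, choosing the exponents $\ll_i,\LL_j$ subject to $\frac1{\ll_1}+\frac1{\ll_2}+\frac1{\LL_2}=2$ and $\frac1{\ll_k}+\frac1{\LL_k}=1$, I estimate $\|f_i\|_{\ell^{\ll_i}}$ using Lemma \ref{g} (which for large $\vv$-power regimes gives $\vv^{-(d/\ll_i-1)/2}$), and estimate $\|g_j\|_{\ell^{\LL_j}}$ by splitting $\Z^d$ at radius $t^{H/\aa}$ and using $\min\{|\eta|^{-\aa/H},t\}\le |\eta|^{-\theta\aa/H}\,t^{1-\theta}$ to trade polynomial decay against powers of $t$. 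Plugging these estimates back, collecting the $\vv$-powers from each $f_i$ and the $t$-powers from each $g_j$, and carefully balancing $\vv$ with $t$ as in \eqref{vv} (three sub-cases according as $d\lessgtr 2+\aa/H$), one finds that the total $\vv$-exponent on the right equals exactly $p/2=n$, which is the desired bound $\vv^{p/2}$.

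The main obstacle is the bookkeeping at the last step: the compatibility constraints on $(\ll_i,\LL_j)$ couple all the norm bounds, so the interpolation exponents $\theta$ in the $\min$-factors and the exponents $\ll_i$ in the $\phi_\vv$-factors have to be chosen simultaneously, in a way that depends on whether $d<2+\aa/H$, $d=2+\aa/H$, or $d>2+\aa/H$. In the critical case $d=2+\aa/H$ the logarithmic factor from Lemma \ref{g} must be absorbed into the choice $\vv=t^{-1}\log t$, and in the supercritical case one must verify that the sub-sums over $\{|\eta|>t^{H/\aa}\}$ are indeed the dominant ones so that the natural scale $\vv=t^{-2/(d-\aa/H)}$ is the right one. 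Once these exponent computations are settled, the estimate \eqref{UU} follows.
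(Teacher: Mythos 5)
Your overall plan tracks the paper's proof of Proposition~\ref{NUP} quite closely: Fourier-expand via \eqref{FP}, bound $\E[\prod_i\widehat{\mu_t^{B,H}}(\xi_i)]$ by Lemma~\ref{LTU}, reduce to the identity permutation by symmetry, cancel one power of $t$ using the $j=1$ factor, and control the remaining lattice sum with Lemma~\ref{G} and Lemma~\ref{g}. Your telescoping substitution $\zeta_k=-\xi_k$, $\xi_p=\sum_k\zeta_k$ reproduces the structure $\prod_i f_i(\xi_i)\prod_j g_j(\sum_{k\le j}\xi_k)$ that the paper also uses, so up to that point the arguments coincide.

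There are, however, two genuine gaps that the paper's proof fills and your proposal does not. First, you leave the interior $g_j$ equal to $\min\{|\cdot|^{-\aa/H},t\}$ with no Gaussian damping and propose to estimate $\|g_j\|_{\ell^{\Lambda_j}}$ by splitting at radius $t^{H/\aa}$ and interpolating. This cannot close: for $t\ge 1$ and $\eta\neq 0$ the minimum is simply $|\eta|^{-\aa/H}$, whose $\ell^{\Lambda_j}$-norm is infinite whenever $\aa\Lambda_j/H\le d$, which is precisely the regime needed (in case~(3) the paper chooses $\Lambda_j=\eta/(\eta-1)$ with $\aa\Lambda_j/H<d$); and in any case $g_j(0)=t$, so even $\|g_j\|_{\ell^\infty}\ge t$ and the $t$-powers will not cancel. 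The paper's fix is the redistribution inequality \eqref{XU}, which siphons a small fraction of the Gaussian weight $\prod_i\exp(-\vv|\xi_i|^2/2)$ onto each $g_j$, turning it into $\phi_{c\vv}^{\aa/H}$ so that Lemma~\ref{g} applies; this is essential, not bookkeeping. Second, your sum silently runs over lattice points where some partial sum $\sum_{i=1}^j\xi_i$ vanishes for $1\le j\le p-1$; there the corresponding factor from Lemma~\ref{LTU} equals $t$ and cannot be controlled by any power of $|\cdot|^{-\aa/H}$. The paper first restricts to the set $\Xi$ of nonvanishing partial sums and then treats the complement by the inductive factorization at the end of the proof. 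Without these two ingredients the Young-inequality argument does not yield the claimed bound $\vv^{p/2}$.
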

\begin{proof}
Let $p$ be a positive even integer.  Combining \eqref{FM}, \eqref{GRA}, \eqref{FP} and Lemma \ref{LTU} together, we have
\begin{equation}\begin{split}\label{NUVP}
&\E[\|\nn u_\vv\|_{L^p(\T^d)}^p]\lesssim\sum_{\xi_1,\cdots,\xi_p\in\mathbb{Z}^d\backslash\{0\}}\prod_{i=1}^p\ff{\exp(-\vv|\xi_i|^2/2)}{|\xi_i|}
\Big|\E\Big[\prod_{i=1}^p\widehat{\mu_t^{B,H}}(\xi_i)\Big]\Big|\dd_0\Big(\sum_{i=1}^p\xi_i\Big)\\
&\lesssim\ff 1 {t^p}\sum_{\xi_1,\cdots,\xi_p\in\mathbb{Z}^d\backslash\{0\}}\prod_{i=1}^p\ff{\exp(-\vv|\xi_i|^2/2)}{|\xi_i|}\sum_{\si\in\mathcal{S}_p}\prod_{j=1}^p\min\Big\{ \Big|\sum_{i=j}^p\xi_{\si_i}\Big|^{-\aa/H},t\Big\}\dd_0\Big(\sum_{i=1}^p\xi_i\Big)\\
&=\ff 1{t^{p-1}}\sum_{\xi_1,\cdots,\xi_p\in\mathbb{Z}^d\backslash\{0\}}\prod_{i=1}^p\ff{\exp(-\vv|\xi_i|^2/2)}{|\xi_i|}\sum_{\si\in\mathcal{S}_p}\prod_{j=2}^p\min\Big\{ \Big|\sum_{i=1}^{j-1}\xi_{\si_i}\Big|^{-\aa/H},t\Big\}\dd_0\Big(\sum_{i=1}^p\xi_{\si_i}\Big)\\
&=\ff 1{t^{p-1}}\sum_{\xi_1,\cdots,\xi_p\in\mathbb{Z}^d\backslash\{0\}}\prod_{i=1}^p\ff{\exp(-\vv|\xi_i|^2/2)}{|\xi_i|}\sum_{\si\in\mathcal{S}_p}\prod_{j=1}^{p-1}\min\Big\{ \Big|\sum_{i=1}^{j}\xi_{\si_i}\Big|^{-\aa/H},t\Big\}\dd_0\Big(\sum_{i=1}^p\xi_{\si_i}\Big),
\end{split}\end{equation}
where the first equality is due to that $\sum_{j=1}^p\xi_{\sigma_j}=\sum_{j=1}^p\xi_j=0$.

We begin by discussing the simplest case when $p=2$. According to \eqref{NUVP} and Lemma \ref{g}, we have
\begin{equation*}\begin{split}
\E[\|\nn u_\vv\|_{L^2(\T^d)}^2]&\lesssim \ff 1 t\sum_{\xi_1,\xi_2\in\Z^d\backslash\{0\}}\prod_{i=1}^2\ff{\exp(-\vv|\xi_i|^2/2)}{|\xi_i|}\sum_{\si\in\mathcal{S}_2}
\min\big\{|\xi_{\si_i}|^{-\aa/H},t \big\}\dd_0\Big(\sum_{i=1}^2\xi_{\si_i}\Big)\\
&\lesssim\ff 1 t \sum_{\xi\in\Z^d\backslash\{0\}}\ff{\exp(-\vv|\xi|^2)}{|\xi|^{2+ \aa/ H}}\lesssim\ff 1 t\|\phi_{c\varepsilon}\|_{l^{2+\ff \aa H}(\Z^d)}^{2+\ff \aa H}
\lesssim \vv,
\end{split}\end{equation*}
where $\vv$ as defined in \eqref{vv}.

It remains to verify that \eqref{UU} holds for $p\ge 4$. Set
$$\Xi:=\Big\{(\xi_1,\cdots,\xi_p)\in(\mathbb{Z}^d\backslash\{0\})^p:\  \sum_{i=1}^j \xi_i\neq 0,j=1,2,\cdots,p-1 \Big\}.$$
Without loss of generality, we only consider the case when $\si$ is the identity permutation. Then
\begin{equation}\begin{split}\label{XL}
&\sum_{\xi_1,\cdots,\xi_p\in\Xi}\prod_{i=1}^p\ff{\exp(-\vv|\xi_i|^2/2)}{|\xi_i|}\prod_{j=1}^{p-1} \Big|\sum_{i=1}^j\xi_{i}\Big|^{-\aa/H}\dd_0\Big(\sum_{i=1}^p\xi_i\Big)\\
&\lesssim\sum_{\xi_1,\cdots,\xi_p\in \mathbb{Z}^d}\prod_{i=1}^p\ff{\exp(-\vv|\xi_i|^2/2)}{|\xi_i|+1}\prod_{j=1}^{p-1} \Big(\Big|\sum_{i=1}^j\xi_{i}\Big|+1\Big)^{-\aa/H}\dd_0\Big(\sum_{i=1}^p\xi_i\Big)\\
&\lesssim\sum_{\xi_1,\cdots,\xi_p\in \mathbb{Z}^d}\prod_{i=1}^p\ff{\exp\Big(-c\vv|\xi_i|^2\Big)}{|\xi_i|+1}
\prod_{j=1}^{p-1}\ff{\exp\Big(-\ff{c\alpha\vv}{H}  \Big|\sum_{i=1}^j\xi_i\Big|^2\Big)}
{\Big(\Big|\sum_{i=1}^j\xi_{i}\Big|+1\Big)^{\aa/H}}\dd_0\Big(\sum_{i=1}^p\xi_i\Big),
\end{split}\end{equation}
for some positive constant $c$,
 the first one is due to that $a\geq(a+1)/2$ for every $a\geq1$, and the third one follows from that
\begin{equation}\label{XU}
\prod_{i=1}^p\exp(-\vv|\xi_i|^2/2)\le\Big\{\prod_{i=1}^p\exp\Big(-c\vv|\xi_i|^2\Big)\Big\}
\prod_{j=1}^{p-1}\exp\Big(-\ff{c\alpha\vv} H\Big|\sum_{i=1}^j\xi_i\Big|^2\Big),
\end{equation}
for some constant $c>0$. Indeed, by the elementary inequality
$$\Big|\sum_{i=1}^j\xi_i\Big|^2\le j\sum_{i=1}^j|\xi_i|^2,\quad j\in\N,$$
we have
$$\ff{\alpha} H\sum_{j=1}^{p-1}\Big|\sum_{i=1}^j\xi_i\Big|^2+\sum_{i=1}^p|\xi_i|^2\le\Big[\ff{\aa p(p-1)}{2H} +1\Big]\sum_{i=1}^p|\xi_i|^2,$$
which immediately implies that \eqref{XU} holds for some constant $c\le (\alpha p(p-1)/H+2)^{-1}$.

Letting
\begin{align*}\begin{cases}
f_1=\phi^{1+\aa/H}_{c\varepsilon},\quad f_2=\cdots=f_{p-1}=\phi_{c\varepsilon},&\\
g_2=\cdots=g_{p-2}=\phi^{\aa/H}_{c\varepsilon},\quad g_{p-1}=\phi^{1+\aa/H}_{c\varepsilon},\quad p\geq 3,\\
\end{cases}\end{align*}
and choosing $\ll_1,\cdots,\ll_{p-1},\Lambda_2,\cdots,\Lambda_{p-1}$ from $[1,\infty]$ such that
\begin{equation}\begin{split}\label{lL}\begin{cases}
\ff 1 {\ll_1}+\ff 1 {\ll_2}+\ff 1 {\Lambda_2}=2,&\\
\ff 1 {\ll_k}+\ff 1 {\Lambda_k}=1,&\quad 3\le k\le p-1,
\end{cases}\end{split}\end{equation}
by Lemma \ref{G}, we have
\begin{equation}\begin{split}\label{XL1}
&\sum_{\xi_1,\cdots,\xi_p\in \mathbb{Z}^d}\prod_{i=1}^p\ff{\exp\Big(-c\vv|\xi_i|^2\Big)}{|\xi_i|+1}
\prod_{j=1}^{p-1}\ff{\exp\Big(-\ff{c\alpha\vv} H   \Big|\sum_{i=1}^j\xi_i\Big|^2\Big)}
{\Big(\Big|\sum_{i=1}^j\xi_i\Big|+1\Big)^{\aa/H}}\dd_0\Big(\sum_{i=1}^p\xi_i\Big)\\
&=\sum_{\xi_1,\cdots,\xi_p\in \mathbb{Z}^d} \phi_{c\varepsilon}(\xi_1)\Big\{\prod_{i=2}^{p-1}\phi_{c\varepsilon}(\xi_i)\Big\}
\phi_{c\varepsilon}(\xi_p)\phi_{c\varepsilon}^{\aa/H}(\xi_1)\Big\{\prod_{j=2}^{p-2}\phi_{c\varepsilon}^{\aa/H}\Big(\sum_{i=1}^j\xi_i\Big)\Big\}\phi_{c\varepsilon}^{\aa/H}\Big(\sum_{i=1}^{p-1}\xi_i\Big)\dd_0\Big(\sum_{i=1}^p\xi_i\Big)\\
&=\sum_{\xi_1,\cdots,\xi_p\in \mathbb{Z}^d}\phi_{c\varepsilon}^{1+\aa/H}(\xi_1)\Big\{\prod_{i=2}^{p-1}\phi_{c\varepsilon}(\xi_i)\Big\}
\Big\{\prod_{j=2}^{p-2}\phi_{c\varepsilon}^{\aa/H}\Big(\sum_{j=1}^j\xi_i\Big)\Big\}
\phi_{c\varepsilon}^{1+\aa/H}\Big(\sum_{i=1}^{p-1}\xi_i\Big)\\
&\le\|\phi_{c\varepsilon}^{1+\aa/H}\|_{l^{\ll_1}(\Z^d)}\Big\{\prod_{i=2}^{p-1}\|\phi_{c\varepsilon}\|_{l^{\ll_i}(\Z^d)}\Big\}
\Big\{\prod_{j=2}^{p-2}\|\phi_{c\varepsilon}^{\aa/H}\|_{l^{\Lambda_j}(\Z^d)}\Big\}\|\phi_{c\varepsilon}^{1+\aa/H}\|_{l^{\Lambda_{p-1}}(\Z^d)}\\
&=:I,
\end{split}\end{equation}
where in the second equality we have used the fact that
$$\phi_{\varepsilon}(\xi_p)=\phi_{\varepsilon}\Big(-\sum_{i=1}^{p-1}\xi_i\Big)
=\phi_{\varepsilon}\Big(\sum_{i=1}^{p-1}\xi_i\Big),\quad\varepsilon>0.$$

Now we prove the assertion case by case. In the sequel, we keep in mind that $t$ is big and hence $\varepsilon$ is small by the choice of \eqref{vv}.

\textbf{(1)} Let $d<2+\aa/H$. We split this case into two parts.

\textbf{Part (i)} Let $d=1$. By taking
\begin{align*}\begin{cases}
\ll_1=1,\\
 \ll_k=1,\,\Lambda_k=\infty,\quad 2\le k\le p-1,
\end{cases}\end{align*}
we easily see that the condition \eqref{lL} is satisfied. Moreover,
$$I=\|\phi_{c\varepsilon}^{1+\aa/H}\|_{l^1(\Z)}\Big(\prod_{i=2}^{p-1}\|\phi_{c\varepsilon}\|_{l^1(\Z)}\Big)
\Big(\prod_{j=2}^{p-2}\|\phi_{c\varepsilon}^{\aa/H}\|_{l^{\infty}(\Z)}\Big)\|\phi_{c\varepsilon}^{1+\aa/H}\|_{l^\infty(\Z)}.$$
According to Lemma \ref{g}, we obtain that
\begin{align*}\begin{cases}
\|\phi_{c\varepsilon}^{1+\aa/H}\|_{l^1(\Z)}=\|\phi_{c\varepsilon}\|_{l^{1+\aa/H}(\Z)}^{1+\aa/H}\lesssim 1,\quad
\|\phi_{c\varepsilon}\|_{l^1(\Z)}\lesssim |\log \vv|,\\
\|\phi_{c\varepsilon}^{\aa/H}\|_{l^\infty(\Z)}\lesssim 1,\quad
\|\phi_{c\varepsilon}^{1+\aa/H}\|_{l^\infty(\Z)}\lesssim 1.
\end{cases}\end{align*}
By choosing $\vv$ as in \eqref{vv}, we arrive at
\begin{equation}\label{I30}
I\lesssim |\log\vv|^{p-2}\lesssim |\log t|^{p-2}\lesssim t^{p-1}\vv^{\ff p 2}.
\end{equation}

\textbf{Part (ii)} Let $d>1$. We claim that there exists $1<\eta<d$ such that
\begin{equation}\label{BD}d<\ff{\aa\eta}{H(\eta-1)}<\ff{\eta}{\eta-1}\Big(1+\ff{\aa}H\Big).\end{equation}
In fact, it is enough to verify the first inequality since the second one is trivial. If $d\le \aa/H$, then the first inequality is obviously true. If $d>\aa/H$, the first inequality is equivalent to $\eta<d/(d-\aa/H)$. Noticing that $d/(d-\aa/H)>d/2$ since $d<2+\aa/H$, we can choose $\eta$ in the following way, i.e.,
\begin{align*}\begin{cases}
\max\left\{1,\ff d 2\right\}<\eta<d,&\quad d\le \aa/H,\\
\max\left\{1,\ff d 2\right\}<\eta<\ff d {d-\aa/H},&\quad d>\aa/H.
\end{cases}\end{align*}
Letting
\begin{align*}\begin{cases}
\ll_1=1,\\
\ll_k=\eta,\,\Lambda_k=\ff{\eta}{\eta-1},\quad 2\le k\le p-1,
\end{cases}\end{align*}
we have
$$I=\|\phi_{c\varepsilon}^{1+\aa/H}\|_{l^1(\Z^d)}\Big(\prod_{i=2}^{p-1}\|\phi_{c\varepsilon}\|_{l^{\eta}(\Z^d)}\Big)
\Big(\prod_{j=2}^{p-2}\|\phi_{c\varepsilon}^{\aa/H}\|_{l^{\eta/(\eta-1)}(\Z^d)}\Big)\|\phi_{c\varepsilon}^{1+\aa/H}\|_{l^{\eta/(\eta-1)}(\Z^d)}.$$

(ii.1) If $d<1+\aa/H$, then by Lemma \ref{g} and \eqref{BD}, we deduce that
\begin{align*}\begin{cases}
\|\phi_{c\varepsilon}^{1+\aa/H}\|_{l^1(\Z^d)}=\|\phi_{c\varepsilon}\|_{l^{1+\aa/H}(\Z^d)}^{1+\aa/H}\lesssim 1,&\\
\|\phi_{c\varepsilon}\|_{l^{\eta}(\Z^d)}\lesssim \vv^{-\ff 1 2(d/{\eta}-1)},&\\
\|\phi_{c\varepsilon}^{\aa/H}\|_{l^{\eta/(\eta-1)}(\Z^d)}=\|\phi_{c\varepsilon}\|_{l^{\aa\eta/(H(\eta-1))}(\Z^d)}^{\aa/H}\lesssim 1,&\\
\|\phi_{c\varepsilon}^{1+\aa/H}\|_{l^{\eta/(\eta-1)}(\Z^d)}=\|\phi_{c\varepsilon}\|_{l^{\eta(1+\aa/H)/(\eta-1)}(\Z^d)}^{1+\aa/H}\lesssim 1.
\end{cases}\end{align*}
Hence, \eqref{vv} and $d<2\eta$ imply that
\begin{equation}\label{I3}
I\lesssim \vv^{-\ff {p-2}2(d/\eta-1)}\lesssim t^{\ff {p-2}2(d/\eta-1)}\lesssim t^{p-1}\vv^{\ff p 2}.
\end{equation}

(ii.2) If $1+\aa/H<d<2+\aa/H$, then by Lemma \ref{g} and \eqref{BD}, we have
\begin{align*}\begin{cases}
\|\phi_{c\varepsilon}^{1+\aa/H}\|_{l^1(\Z^d)}=\|\phi_{c\varepsilon}\|_{l^{1+\aa/H}(\Z^d)}^{1+\aa/H}\lesssim\vv^{-\ff 1 2(d-1-\aa/H)},&\\
\|\phi_{c\varepsilon}\|_{l^{\eta}(\Z^d)}\lesssim\vv^{-\ff 1 2\left(d/\eta-1\right)},&\\
\|\phi_{c\varepsilon}^{\aa/H}\|_{l^{\eta/(\eta-1)}(\Z^d)}=\|\phi_{c\varepsilon}\|_{l^{\aa\eta/(H(\eta-1))}(\Z^d)}^{\aa/H}\lesssim 1,&\\
\|\phi_{c\varepsilon}^{1+\aa/H}\|_{l^{\eta/(\eta-1)}(\Z^d)}=\|\phi_{c\varepsilon}\|_{l^{\eta(1+\aa/H)/(\eta-1)}(\Z^d)}^{1+\aa/H}\lesssim 1.&\\
\end{cases}\end{align*}
Thus, according to that
$$\Big(\ff d {\eta}-1\Big)(p-2)+d-1-\ff \aa H<p-2,$$
we obtain
\begin{equation}\begin{split}\label{I4}
I
\lesssim t^{\ff 1 2\left[(d/\eta-1)(p-2)+d-1-\aa/H\right]}
\lesssim t^{\ff p 2 -1}= t^{p-1}\vv^{\ff p 2},
\end{split}\end{equation}
since $\vv=1/t$ by \eqref{vv}.

(ii.3) If $d=1+\aa/H$, then by Lemma \ref{g} and \eqref{BD}, we have
\begin{align*}\begin{cases}
\|\phi_{c\varepsilon}^{1+\aa/H}\|_{l^1(\Z^d)}=\|\phi_{c\varepsilon}\|_{l^{1+\aa/H}(\Z^d)}^{1+\aa/H}\lesssim |\log\vv|,&\\
\|\phi_{c\varepsilon}\|_{l^{\eta}(\Z^d)}\lesssim \vv^{{-\ff 1 2}(d/\eta-1)},&\\
\|\phi_{c\varepsilon}^{\aa/H}\|_{l^{\eta/(\eta-1)}(\Z^d)}=\|\phi_{c\varepsilon}\|_{l^{\aa\eta/(H(\eta-1))}(\Z^d)}^{\aa/H}\lesssim 1,&\\
\|\phi_{c\varepsilon}^{1+\aa/H}\|_{l^{\eta/(\eta-1)}(\Z^d)}=\|\phi_{c\varepsilon}\|_{l^{\eta(1+\aa/H)/(\eta-1)}(\Z^d)}^{1+\aa/H}\lesssim 1.
\end{cases}\end{align*}
Thus, by \eqref{vv} and $d/\eta<2$,
\begin{equation}\begin{split}\label{I5}
I
\lesssim t^{\ff 1 2(d/\eta-1)(p-2)}\lesssim t^{p-1}\vv^{\ff p 2}.
\end{split}\end{equation}

\textbf{(2)} Let $d=2+\aa/H$. By choosing
\begin{align*}\begin{cases}
\ll_1=1, \\
\ll_k=d,\,\Lambda_k=\ff d {d-1}, \quad 2\le k\le p-1,
\end{cases}\end{align*}
such that the condition \eqref{lL} is satisfied, we have
$$I=\|\phi_{c\varepsilon}^{1+\aa/H}\|_{l^1(\Z^d)}\Big(\prod_{i=2}^{p-1}\|\phi_{c\varepsilon}\|_{l^d(\Z^d)}\Big)
\Big(\prod_{j=2}^{p-2}\|\phi_{c\varepsilon}^{\aa/H}\|_{l^{d/(d-1)}(\Z^d)}\Big)\|\phi_{c\varepsilon}^{1+\aa/H}\|_{l^{d/(d-1)}(\Z^d)}.$$
Noting that $1+\aa/H=d-1$, by Lemma \ref{g} and a simple computation, we obtain
\begin{align*}\begin{cases}
\|\phi_{\varepsilon/c}^{1+\aa/H}\|_{l^1(\Z^d)}=\|\phi_{c\varepsilon}\|_{l^{1+\aa/H}(\Z^d)}^{1+\aa/H}\lesssim \vv^{-\ff 1 2 \left(\ff{d}{1+\aa/H}-1\right)(1+\aa/H)}=\vv^{-\ff 1 2},&\\
\|\phi_{c\varepsilon}\|_{l^d(\Z^d)}\lesssim|\log \vv|^{1/d},&\\
\|\phi_{c\varepsilon}^{\aa/H}\|_{l^{d/(d-1)}(\Z^d)}=\|\phi_{c\varepsilon}\|_{l^{d(d-2)/(d-1)}(\Z^d)}^{d-2}\lesssim \vv^{-\ff  1 2\left[(d-1)/(d-2)-1\right](d-2)}=\vv^{-\ff 1 2},&\\
\|\phi_{c\varepsilon}^{1+\aa/H}\|_{l^{d/(d-1)}(\Z^d)}=\|\phi_{c\varepsilon}\|_{l^{d(1+\aa/H)/(d-1)}(\Z^d)}^{1+\aa/H}\lesssim|\log \vv|^{1-1/d},
\end{cases}\end{align*}
which together with the choice of $\varepsilon= \log t/t$ from \eqref{vv} implies that
\begin{equation}\begin{split}\label{I1}
I
\lesssim\vv^{-\ff 1 2 (p-2)}|\log \vv|^{1+\ff{p-3}d}
\lesssim t^{\ff 1 2(p-2)}(\log t)^{\ff{p-3}d+2-\ff p 2}
\lesssim t^{p-1}\vv^{\ff p 2},
\end{split}\end{equation}
where the last inequality is due to that $p\ge 2-1/(d-1)$ since $d=2(1+\aa)$ and $p$ is an even number in $\N$.

\textbf{(3)} Let $d>2+\aa/H$. By choosing $\eta<d$ such that $1+\aa/H<\eta-1<d$, $\eta(1+\aa/H)/(\eta-1)<d$ and
\begin{align*}\begin{cases}
\ll_1=1,\\
\ll_k=\eta,\,\Lambda_k=\ff{\eta}{\eta-1}, \quad 2\le k\le p-1,
\end{cases}\end{align*}
it is easily to verify that \eqref{lL} holds. Then
$$I=\|\phi_{c\varepsilon}^{1+\aa/H}\|_{l^1(\Z^d)}\Big(\prod_{i=2}^{p-1}\|\phi_{c\varepsilon}\|_{l^{\eta}(\Z^d)}\Big)
\Big(\prod_{j=2}^{p-2}\|\phi_{c\varepsilon}^{\aa/H}\|_{l^{\eta/(\eta-1)}(\Z^d)}\Big)
\|\phi_{c\varepsilon}^{1+\aa/H}\|_{l^{\eta/(\eta-1)}(\Z^d)}.$$
According to Lemma \ref{g} and the choice of $\eta$, we arrive at
\begin{align*}\begin{cases}
\|\phi_{c\varepsilon}^{1+\aa/H}\|_{l^1(\Z^d)}=\|\phi_{c\varepsilon}\|_{l^{1+\aa/H}(\Z^d)}^{1+\aa/H}\lesssim \vv^{-\ff 1 2\left(\ff d {1+\aa/H}-1\right)(1+\aa/H)}=\vv^{-\ff 1 2(d-1-\aa/H)},&\\
\|\phi_{c\varepsilon}\|_{l^{\eta}(\Z^d)}\lesssim \vv^{-\ff 1 2( d /\eta-1)},&\\
\|\phi_{c\varepsilon}^{\aa/H}\|_{l^{\eta/(\eta-1)}(\Z^d)}=\|\phi_{c\varepsilon}\|_{l^{\aa\eta/(H(\eta-1))}(\Z^d)}^{\aa/H}
\lesssim \vv^{-\ff 1 2 \left[d\left(1- 1 /\eta\right)-\aa/H\right]},&\\
\|\phi_{c\varepsilon}^{1+\aa/H}\|_{{l^{\eta/(\eta-1)}}(\Z^d)}=\|\phi_{c\varepsilon}\|_{l^{(1+\aa/H)\eta/(\eta-1)}(\Z^d)}^{1+\aa/H}
\lesssim \vv^{-\ff 1 2\left[d(1-1/\eta)-1-\aa/H\right]}.
\end{cases}\end{align*}
Thus, together with \eqref{vv}, it is easily to see that
\begin{equation}\label{I2}
I\lesssim \vv^{-\ff 1 2[(p-1)(d-\aa/H)-p]}\lesssim t^{p-1}\vv^{\ff p 2}.
\end{equation}

Gathering \eqref{NUVP}, \eqref{XL} and \eqref{XL1} together, we have
\begin{equation}\begin{split}\label{U-}
\E[\|\nn u_\vv\|_{L^p(\T^d)}^p]\lesssim\ff 1 {t^{p-1}}\sum_{\xi_1,\cdots,\xi_p\in\Xi}\prod_{i=1}^p\ff{\exp(-\vv|\xi_i|^2/2)}{|\xi_i|}\prod_{j=1}^{p-1}
\Big|\sum_{i=1}^j\xi_i\Big|^{-2\aa}\dd_0\Big(\sum_{i=1}^p\xi_i\Big)
\lesssim t^{1-p} I.
\end{split}\end{equation}
Thus, by \eqref{I30}, \eqref{I3}, \eqref{I4}, \eqref{I5}, \eqref{I1} and \eqref{I2},  we immediately obtain
\begin{equation}\label{U}
\E[\|\nn u_\vv\|_{L^p(\T^d)}^p]\lesssim\vv^{\ff p 2}.
\end{equation}

Let
$$q:=\min\Big\{j\in\{1,2,\cdots,p-1\}:\  \xi_i\in\mathbb{Z}^d\setminus\{0\},\, i=1,\cdots,j,\,\sum_{i=1}^j \xi_i=0\Big\}.$$
Then, by the methods of induction, similar as the derivation of \eqref{U-}, \eqref{U} and \eqref{NUVP}, we finally arrive at
\begin{align*}
&\E[\|\nn u_\vv\|_{L^p(\T^d)}^p]\\
&\lesssim\sum_{\xi_1,\cdots,\xi_q\in\Xi}  \prod_{i=1}^q  \ff{\exp(-\vv|\xi_i|^2/2)}{|\xi_i|}\prod_{j=1}^{q-1}\Big|\sum_{i=1}^j\xi_i\Big|^{-\aa/H}\dd_0
\Big(\sum_{i=1}^q\xi_i\Big)\times\\
&\quad \sum_{\xi_{q+1},\cdots,\xi_p\in\Z^d\backslash\{0\}}\prod_{i=q+1}^p\ff{\exp(-\vv|\xi_i|^2/2)}{|\xi_i|}\prod_{j=q+1}^{p-1}\min\Big\{ \Big|\sum_{i={q+1}}^j\xi_i\Big|^{-\aa/H}, t\Big\}\dd_0\Big(\sum_{i=q+1}^p\xi_i\Big)\\
&\lesssim \vv^{\ff q 2}\vv^{\ff{p-q} 2}\lesssim \vv^{\ff p 2},
\end{align*}
which completes the proof.
\end{proof}

\begin{proof}[Proof of Theorem \ref{TH2}]
For every  $p\ge1$,
we may take a even number $q\in\mathbb{N}$ such that $q\geq p$. According to \eqref{WPU} and \eqref{UU}, by the monotonicity of $p\mapsto\W_p$, we have
$$\E[\W_p^p(\mu_t^{B,H},\mathfrak{m})]\lesssim \vv^{\ff p 2}+\Big(\E[\|\nn u_{\vv}\|_{L^q(\T^d)}^q]\Big)^{p/q}\lesssim\vv^{\ff p 2},$$
where $\vv$ is also given by \eqref{vv} for large enough $t>0$. 
\end{proof}



\section{Lower bounds}
In this section, we first prove Theorem \ref{TH1}, and then we establish a lower bound for $\R^d$-valued sfBMs which can be regard as an extension of our approach from the compact setting to the non-compact one.
\subsection{Proofs of Theorem \ref{TH1}: the sBM case}
To show the lower estimate in Theorem \ref{TH1}, we also need the matching lower bound on $\E[|\widehat{\mu_t^B}(\xi)|^2]$, which is presented next.
\begin{lem}\label{XLT}
Assume that $B\in\mathbf{B}$. Then for any $\xi\in\mathbb{Z}^d\backslash\{0\}$, it holds that
$$\E\Big[|\widehat{\mu_t^B}(\xi)|^2\Big]\asymp \ff 1 {tB(2\pi^2|\xi|^2)},$$
for large enough $t>0$.
\end{lem}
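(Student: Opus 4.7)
The plan is to evaluate $\E[|\widehat{\mu_t^B}(\xi)|^2]$ essentially in closed form, exploiting the fact that for $H=1/2$ the process $X^B$ has stationary, independent increments—this is exactly the structural property that was used in \eqref{01LTU}--\eqref{2LTU} in Part 1 of the proof of Lemma \ref{LTU}. Starting from
$$|\widehat{\mu_t^B}(\xi)|^2 = \frac{1}{t^2}\int_0^t\int_0^t \exp\bigl(-2\pi\i\<\xi,X_s^B-X_r^B\>\bigr)\,\d s\,\d r,$$
I would move the expectation inside. For $s\ge r$, the increment $X_s^B-X_r^B$ has the same law as $X_{s-r}^B=X^{1/2}_{S^B_{s-r}}$, so conditioning on the subordinator and applying \eqref{LT} gives
$$\E\bigl[\exp(-2\pi\i\<\xi,X_{s-r}^B\>)\bigr] = \E\bigl[\exp(-2\pi^2|\xi|^2 S^B_{s-r})\bigr] = \exp\bigl(-(s-r)B(2\pi^2|\xi|^2)\bigr).$$

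Setting $\beta:=B(2\pi^2|\xi|^2)$ and using symmetry in $(s,r)$, the double integral collapses to
$$\E\bigl[|\widehat{\mu_t^B}(\xi)|^2\bigr] = \frac{2}{t^2}\int_0^t \frac{1-e^{-\beta s}}{\beta}\,\d s = \frac{2}{t\beta}\Bigl[1-\frac{1-e^{-\beta t}}{\beta t}\Bigr].$$
The upper bound $\E[|\widehat{\mu_t^B}(\xi)|^2]\le 2/(t\beta)$ is then immediate. For the matching lower bound, the bracketed factor equals $f(\beta t)$, where $f(x):=1-(1-e^{-x})/x$ is continuous, strictly increasing on $(0,\infty)$ with $f(0^+)=0$ and $f(\infty)=1$; in particular $f(x)\ge f(1)>0$ for all $x\ge 1$.

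The only mildly subtle point is to produce a single threshold on $t$ that makes $\beta t\ge 1$ for every $\xi\in\Z^d\setminus\{0\}$ simultaneously. This is where the discreteness of the lattice intervenes: $|\xi|\ge 1$ for all nonzero $\xi\in\Z^d$, and since $B\in\mathbf{B}$ is non-decreasing with $B(0)=0$ and $B'(0)>0$ we have $B>0$ on $(0,\infty)$, whence $\beta=B(2\pi^2|\xi|^2)\ge B(2\pi^2)>0$ uniformly in $\xi$. Hence, taking $t\ge 1/B(2\pi^2)$ forces $\beta t\ge 1$ and delivers $\E[|\widehat{\mu_t^B}(\xi)|^2]\ge 2f(1)/(t\beta)$, completing the two-sided bound. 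There is no real obstacle beyond this uniformity check; the whole argument is an exact computation enabled by the independent-increments property, which is precisely why the statement is restricted to subordinated Brownian motions rather than general subordinated fBMs.
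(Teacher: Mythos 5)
Your proposal is correct and follows essentially the same route as the paper: both reduce $\E[|\widehat{\mu_t^B}(\xi)|^2]$ to a double time-integral of $\exp(-B(2\pi^2|\xi|^2)|s-r|)$ by conditioning on the subordinator and invoking \eqref{LT}, and both ultimately rely on the fact that $B(2\pi^2|\xi|^2)\ge B(2\pi^2)>0$ uniformly over nonzero lattice points to make "large enough $t$" uniform in $\xi$. The paper splits the integral into two simplicial pieces and bounds each from below by restricting the outer variable to a half-interval, whereas you compute the integral in closed form and extract the same two-sided bound from monotonicity of $x\mapsto 1-(1-e^{-x})/x$; this is a tidier presentation of the same calculation rather than a different idea.
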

\begin{proof}
Firstly, we show the upper bound. Taking $p=2$ and $\xi_1=-\xi_2=\xi\in\Z^d\backslash\{0\}$ in \eqref{3LTU'}, we have
\begin{equation}\begin{split}\label{1XLT}
\E\Big[|\widehat{\mu_t^B}(\xi)|^2\Big]&=\Big|\E\Big[\prod_{j=1}^2\widehat{\mu_t^B}(\xi_j)\Big]\Big|\le\ff 1 {t^2}\sum_{\si\in\mathcal{S}_2}\prod_{j=1}^2\min\left\{\ff 1 {B(2\pi^2|\sum_{i=j}^2\xi_{\si_i}|^2)},t\right\}\\
&=\ff 1 t \sum_{\sigma\in\mathcal{S}_2}\min\left\{\ff 1 {B(2\pi^2|\xi_{\si_2}|^2)},t\right\}
\leq\ff 2 {tB(2\pi^2|\xi|^2)},\quad t>0,
\end{split}\end{equation}
 where the second equality is due to that $\xi_{\si_1}+\xi_{\si_2}=0$ for any $\sigma\in\mathcal{S}_2$ and $B\in\mathbf{B}$.

Now, we turn to prove the lower bound. By the fact that $|\widehat{\mu_t^B}(\xi)|^2=\widehat{\mu_t^B}(\xi)\widehat{\mu_t^B}(-\xi)$, we may divide $\E[|\widehat{\mu_t^B}(\xi)|^2]$ into two parts.
\begin{equation}\begin{split}\label{2XLT}
\E[|\widehat{\mu_t^B}(\xi)|^2]
&=\ff 1 {t^2}\int_0^t\int_0^{s_1}\E\left[\exp(-2\pi^2|\xi|^2(S_{s_1}^B-S_{s_2}^B))\right]\,\d s_2 \d s_1\\
&+\ff 1 {t^2}\int_0^t\int_{s_1}^t\E\left[\exp\big(-2\pi^2|\xi|^2(S_{s_2}^B-S_{s_1}^B)\big)\right]\,\d s_2 \d s_1\\
&=\ff 1 {t^2}\int_0^t\int_0^{s_1}\exp\left(-B(2\pi^2|\xi|^2)(s_1-s_2)\right)\,\d s_2 \d s_1\\
&+\ff 1 {t^2}\int_0^t\int_{s_1}^t\exp\left(-B(2\pi^2|\xi|^2)(s_2-s_1)\right)\,\d s_2 \d s_1\\&=:I_1+I_2,\quad t>0.
\end{split}\end{equation}
For $I_1$, since $B\in\mathbf{B}$, it is easy to see that for every $t>1$,
\begin{equation}\begin{split}\label{3XLT}
I_1&=\ff 1 {t^2}\int_0^t\int_0^{s_1}\exp\big(-B(2\pi^2|\xi|^2) u\big)\,\d u\d s_1\\
&\ge\ff 1 {t^2}\int_{\ff t 2}^t\int_0^{\ff t 2}\exp\big(-B(2\pi^2|\xi|^2) u\big)\,\d u\d {s_1}\\
&\gtrsim\ff 1 {tB(2\pi^2|\xi|^2)},\quad \xi\in\Z^d\backslash\{0\}.
\end{split}\end{equation}
For $I_2$, similarly, we have
\begin{equation}\begin{split}\label{4XLT}
I_2&=\ff 1{t^2}\int_0^t\int_0^{t-s_1}\exp\big(-B(2\pi^2|\xi|^2) u\big)\,\d u\d s_1\\
&\geq \ff 1{t^2}\int_0^{\frac{t}{2}}\int_0^{\frac{t}{2}}\exp\big(-B(2\pi^2|\xi|^2) u\big)\,\d u\d s_1\\
&\gtrsim\ff 1 {tB(2\pi^2|\xi|^2)},\quad \xi\in\Z^d\backslash\{0\},\,t>1.
\end{split}\end{equation}

Combining \eqref{1XLT} with \eqref{2XLT}, \eqref{3XLT} and \eqref{4XLT} together, we complete the proof.
\end{proof}

Now, according to \eqref{W1}, by the estimate on $\E[|\widehat{\mu_t^B}(\xi)|^2]$ in Lemma \ref{XLT}, we are ready to prove the lower bound of $\E[\W_1(\mu_t^B,\mathfrak{m})]$.
\begin{prp}\label{sBM-main}
 Assume that $B\in\mathbf{B}_\aa\cap\mathbf{B}^\aa$ for some $\aa\in[0,1]$.  Then
\begin{equation}\label{MW1L}
\E[\W_1(\mu_t^B,\mathfrak{m})]\gtrsim
\begin{cases}
\ff 1 {t^{1/2}},\quad & d<2(1+\aa),\\
\sqrt{\ff{\log t}t},\quad & d=2(1+\aa),\\
t^{-\ff 1 {d-2\aa}},\quad & d>2(1+\aa),
\end{cases}
\end{equation}
for large enough $t>0$.
\end{prp}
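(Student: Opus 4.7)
The plan is to apply the lower bound \eqref{W1L} of Lemma \ref{W} pointwise with $\mu=\mu_t^B$, $\nu=\mathfrak{m}$, where $u_\vv$ solves $-\Delta u=P_\vv(\mu_t^B-\mathfrak{m})$, and then take expectations:
$$\E[\W_1(\mu_t^B,\mathfrak{m})]\ge \sup_{\kappa,\vv>0}\Big\{\ff{1}{\kappa}\E\|\nn u_\vv\|_{L^2(\T^d)}^2-\ff{c}{\kappa^3}\E\|\nn u_\vv\|_{L^4(\T^d)}^4\Big\}.$$
The strategy is to derive a sharp lower bound for the $L^2$-term using the matching estimate of Lemma \ref{XLT} together with the hypothesis $B\in\mathbf{B}_\aa$, to control the $L^4$-term by invoking Proposition \ref{NUP} (with $p=4$ and $H=1/2$, which uses $B\in\mathbf{B}^\aa$), and then to tune $\kappa$ so that the quartic term is dominated by the quadratic one.

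For the $L^2$ lower bound, by Parseval's identity together with \eqref{FM} and \eqref{GRA} one has
$$\|\nn u_\vv\|_{L^2(\T^d)}^2=\sum_{\xi\in\mathbb{Z}^d\backslash\{0\}}\ff{\exp(-4\pi^2\vv|\xi|^2)}{4\pi^2|\xi|^2}|\widehat{\mu_t^B}(\xi)|^2.$$
Taking expectations, applying the lower bound in Lemma \ref{XLT}, and using $B(\ll)\lesssim \ll^\aa$ (a consequence of $B\in\mathbf{B}_\aa$), yields
$$\E\|\nn u_\vv\|_{L^2(\T^d)}^2\gtrsim \ff{1}{t}\sum_{\xi\in\mathbb{Z}^d\backslash\{0\}}\ff{\exp(-c\vv|\xi|^2)}{|\xi|^{2+2\aa}}\asymp \ff{1}{t}\|\phi_{c\vv}\|_{l^{2+2\aa}(\mathbb{Z}^d)}^{2+2\aa}.$$
By Lemma \ref{g} applied with $p=2+2\aa$, this is $\gtrsim 1/t$ when $d<2(1+\aa)$, $\gtrsim |\log\vv|/t$ when $d=2(1+\aa)$, and $\gtrsim \vv^{-(d-2-2\aa)/2}/t$ when $d>2(1+\aa)$.

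For the $L^4$ upper bound, Proposition \ref{NUP} with $p=4$ and $H=1/2$ (so $\aa/H=2\aa$) gives $\E\|\nn u_\vv\|_{L^4(\T^d)}^4\lesssim \vv^2$ provided $\vv$ is chosen as in \eqref{vv}. I would therefore pick $\vv=1/t$, $\vv=(\log t)/t$, and $\vv=t^{-2/(d-2\aa)}$ in the three regimes respectively. Writing the resulting lower estimate in the form $A/\kappa-cB/\kappa^3$, the selection $\kappa=C_0(B/A)^{1/2}$ with $C_0$ sufficiently large forces the subtracted term to be at most half the first one, leaving $\E[\W_1(\mu_t^B,\mathfrak{m})]\gtrsim A^{3/2}/B^{1/2}$. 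A direct substitution of the values of $A$ and $B$ recovers the three rates $t^{-1/2}$, $\sqrt{(\log t)/t}$, and $t^{-1/(d-2\aa)}$ exactly matching \eqref{MW1L}.

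The main obstacle I anticipate is coordinating the two estimates on the same $\vv$: since Proposition \ref{NUP} is formulated only for the particular $\vv$ of \eqref{vv}, one has to verify by direct inspection that, with exactly this $\vv$, the Gaussian cut-off $\exp(-c\vv|\xi|^2)$ does not accidentally deflate the $L^2$ sum below its expected order of magnitude. In all three regimes the dominant Fourier modes sit at $|\xi|\lesssim \vv^{-1/2}$, which is precisely the range over which the integral analogue of $\|\phi_{c\vv}\|_{l^{2+2\aa}}^{2+2\aa}$ realises its order, so the matching goes through; once this is settled, the balancing over $\kappa$ is a routine calculus exercise, parallel in spirit to the Fourier-analytic treatment behind the upper bound in Theorem \ref{TH2}.
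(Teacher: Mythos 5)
Your proposal follows the paper's own argument almost verbatim: apply \eqref{W1L}, bound $\E[\|\nn u_\vv\|_{L^2(\T^d)}^2]$ from below via Lemma \ref{XLT} together with $B(\ll)\lesssim\ll^\aa$ (from $B\in\mathbf{B}_\aa$) and Lemma \ref{g}, bound $\E[\|\nn u_\vv\|_{L^4(\T^d)}^4]$ from above via Proposition \ref{NUP}, and then tune $\kappa\asymp\sqrt{\vv}$ with $\vv$ chosen as in \eqref{vv}. The coordination concern you flag at the end is resolved exactly as you suggest and matches the paper's treatment.
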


\begin{proof}
For every $\varepsilon>0$, let $u_\varepsilon$ be the solution to the following Poisson's equation regularized by the heat semigroup $(P_\vv)_{\vv>0}$ on $\T^d$:
\begin{equation}\label{PE}-\Delta u=P_\vv(\mu_t^B-\mathfrak{m}).\end{equation}
By \eqref{F2}, \eqref{FM}, \eqref{GRA}, Lemma \ref{XLT} and the fact (see \eqref{bound-B}) that $s^\alpha\gtrsim B(s)$ for any $s\geq0$ since $B\in\mathbf{B}_\aa$, we have
\begin{equation}\begin{split}\label{1W1U}
\E[\|\nn u_\vv\|_{L^2(\T^d)}^2]
\asymp \ff 1 t\sum_{\xi\in\mathbb{Z}^d\backslash\{0\}}\ff{\exp(-\vv|\xi|^2)}{|\xi|^{2}B(2\pi^2|\xi|^2)}
\gtrsim
\ff 1 t\|\phi_{c\varepsilon}\|_{l^{2(1+\aa)}(\mathbb{Z}^d)}^{2(1+\aa)},\quad t>1,\,\varepsilon>0,
\end{split}\end{equation}
where $c=1/(2+2\alpha)$. 
By Lemma \ref{g} and \eqref{1W1U}, we have
\begin{equation}\begin{split}\label{2W1U}
\E[\|\nn u_\vv\|_{L^2(\T^d)}]\gtrsim
\begin{cases}
\ff 1 {\sqrt{t}},\quad &d<2(1+\aa),\\
\ff 1 {\sqrt{t}}|\log\vv|^{\ff 1 2},\quad &d=2(1+\aa),\\
\ff 1 {\sqrt{t}}\vv^{-\ff 1 4[d-2(1+\aa)]},\quad &d>2(1+\aa),
\end{cases}
\end{split}\end{equation}
for large enough $t>0$.

Letting $\vv$ be chosen as in  \eqref{vv}, for large enough $t>0$, on the one hand, we immediately obtain that $\E[\|\nn u_\vv\|_{L^2(\T^d)}^2]\gtrsim\vv$ by \eqref{2W1U},  and on the other hand, by Proposition \ref{NUP}, we have that $\E[\|\nn u_\vv\|_{L^4(\T^d)}^4]\lesssim \vv^2$. According to \eqref{W1L}, we have
\begin{align*}
\E[\W_1(\mu_t^B,\mathfrak{m})]
&\gtrsim\ff 1 {\kappa}\vv-\ff C {\kappa^3} \vv^2,\quad \kappa>0.
\end{align*}
Letting $\kappa=2 \sqrt{C\vv}$ and $\vv$ be chosen as in \eqref{vv}, we prove \eqref{MW1L}.
\end{proof}

Now, we use Proposition \ref{sBM-main} to prove the first result in Theorem \ref{TH1}.
\begin{proof}[Proof of Theorem \ref{TH1}(1)]
It is well known that $\W_1(\mu_t^B,\mathfrak{m})\le\W_p(\mu_t^B,\mathfrak{m})$ for every $p\geq1$, since $p\mapsto \W_p$ is increasing. By the  assumption that $B\in\mathbf{B}_\aa\cap\mathbf{B}^\aa$ for some $\aa\in[0,1]$, due to \eqref{MW1L}, it is clear that \eqref{1TH1} holds.
\end{proof}

\subsection{Proofs of Theorem \ref{TH1}: the sfBM case}
In this part, we adopt another approach to establish the lower bound on $\W_p$ for all $p>0$, which is motivated by \cite[Theorem 1.1(2)]{WangWu}. One key step is to use the empirical measure associated with the time-discretized process to approximate the one associated with the original continuous-time process.

\begin{proof}[Proof of Theorem \ref{TH1}(2)]
Let $B\in\mathbf{B}_\aa$ for some $\aa\in(0,1]$ and $p\in(0,\alpha)$. For any  $N\in\N$  and $t> N$, we define
$$\mu_N^{B,H}=\ff 1 N\sum_{i=1}^N\dd_{X_{t_i}^{B,H}},$$
where $t_i=\ff{(i-1)t} N$, $1\le i\le N$. By the triangle inequality of Wasserstein distance, we have
\begin{equation}\label{THL2}
\E[\W_p(\mu_t^{B,H},\mathfrak{m})]\ge \E[\W_p(\mathfrak{m},\mu_{N}^{B,H})]-\E[\W_p(\mu_t^{B,H},\mu_N^{B,H})]
\end{equation}

Firstly, we give the upper bound of $\E[\W_p(\mu_t^{B,H},\mu_N^{B,H})]$. It is easy to verify that
$$\frac{1}{t}\sum_{i=1}^N\int_{t_i}^{t_{i+1}}\delta_{X^{B,H}_s}( \d x)\delta_{X^{B,H}_{t_i}}( \d y)\,\d s\in\mathscr{C}(\mu_t^{B,H},\mu_N^{B,H}),$$
and hence,
$$\W_p(\mu_t^{B,H},\mu_N^{B,H})\le \frac{1}{t}\sum_{i=1}^N\int_{t_i}^{t_{i+1}}\rho(X^{B,H}_{s},X^{B,H}_{t_i})^p\,\d s.$$
From \cite[page 17]{WangWu}, we see that
$$\E[(S_r^B)^p]\lesssim r^{\ff p {\aa}},\quad r\in[0,1],\,p\in(0,\aa).$$
Let $\mathrm{Proj}:\R^d\to \R^d/ \mathbb{Z}^d=\T^d$ be the natural projection map. Then
$$\rho\big(\mathrm{Proj}(x), \mathrm{Proj}(y)\big)\le|x-y|,\quad x,y\in\R^d.$$
Since $(X_t^{B,H})_{t\geq0}$ is the subordinated process of a natural projection of a $\R^d$-valued fBM,
we have
\begin{eqnarray*}\begin{split}
\E[\rho(X_s^{B,H},X_{t_i}^{B,H})^p]&=\E[\rho(X_{S_s^B}^H,X_{S_{t_i}^B}^H)^p]
\lesssim \E[|S_s^B-S_{t_i}^B|^{pH}]\\
&=\E[|S_{s-t_i}^B|^{pH}]\lesssim (s-t_i)^{\ff{pH}\aa},\quad s\geq t_i.
\end{split}\end{eqnarray*}
Thus
\begin{equation}\label{THL3}
\E[\W_p(\mu_t^{B,H},\mu_N^{B,H})]\lesssim (tN^{-1})^{\ff {pH}\aa},\quad t> N,N\in\N.
\end{equation}

Secondly, since $\T^d$ is compact, it is clear that
$$\mathfrak{m}(\{\rho(x,\cdot)^p\le r\})\lesssim r^{\ff d p},\quad r>0,\,x\in\T^d.$$
By \cite[Proposition~4.2]{K}, this implies that
\begin{equation}\label{LL}
\W_p(\mu_N^{B,H},\mathfrak{m})\gtrsim N^{-\frac{p}{d}},\quad N\in\mathbb{N}.
\end{equation}

Finally, combining \eqref{LL} with \eqref{THL2} and \eqref{THL3}, we arrive at
$$\E[\W_p(\mu_t^{B,H},\mathfrak{m})]\gtrsim N^{-\ff p d}-(tN^{-1})^{\ff {pH}\aa},\quad t>N,\,N\in\N.$$
Since $d>\aa/H$, by taking $N\asymp t^{\ff d{d-\aa/H}}$, we have for large enough $t>0$,
$$\E[\W_p(\mu_t^{B,H},\mathfrak{m})]\gtrsim t^{-\ff p{d-\aa/H}}.$$

Therefore, due to the fact that $(\W_p)^{1/p}\leq\W_1\leq\W_q$ for every $0<p\leq1\leq q$ by the H\"{o}lder's inequality,
we finish the proof of \eqref{THL1}.
\end{proof}

\subsection{Lower bounds for sfBMs on $\R^d$}
Concerning two independent sfBMs on $\R^d$, we have the following corollaries on the lower bound of $\W_p$ for all $p\geq1$. We introduce some notions first.  Let $M_1$ and $M_2$ be metric spaces and $F: M_1\rightarrow M_2$ be a Borel measurable map. Given a Borel measure $\nu$ on
$M_1$, we define the push-forward measure $F_\ast\nu$ by $F_\ast\nu(A)=\nu\big(F^{-1}(A)\big)$ for any Borel subset $A$ of $M_2$. For any $p\geq1$ and Borel probability measures $\mu,\nu$ on $\R^d$, let
$$\W_{p,\R^d}(\mu,\nu)=\inf_{\pi\in\C_{\R^d}(\mu,\nu)}\Big(\int_{\R^d\times \R^d}|x-y|^p\,\pi(\d x,\d y)\Big)^{1/p},$$
where $\C_{\R^d}(\mu,\nu)$ is the set of all probability measure on the product space $\R^d\times \R^d$ with marginal distributions $\mu$ and $\nu$, respectively.
\begin{cor}\label{WPLT}
Let $B_1\in\mathbf{B}_{\aa_1}$ and $B_2\in\mathbf{B}^{\aa_2}$ for some $\aa_1,\aa_2\in[0,1]$. Assume that $\alpha_1\leq \alpha_2$ and $X^1:=(X_t^{B_1})_{t\ge 0}$ and $X^2:=(X_t^{B_2})_{t\ge 0}$ are $\R^d$-valued, independent sBMs corresponding to $B_1$ and $B_2$, respectively. Then for every $p\ge 1$ and every large enough $t>0$,
\begin{equation}\label{WPL}
\E\Big[\W_{p,\R^d}^p\Big(\ff 1 t\int_0^t \dd_{X_s^{B_1}}\,\d s, \ff 1 t\int_0^t \dd_{X_s^{B_2}}\,\d s\Big)\Big]\gtrsim
\begin{cases}
 t^{-p/2},&\quad d<2(1+\aa_1),\\
\left(\frac{\log t}{t}\right)^{p/2},&\quad d=2(1+\aa_1),\\
t^{-p/(d-2\aa_1)},&\quad d>2(1+\aa_1).
\end{cases}
\end{equation}
\end{cor}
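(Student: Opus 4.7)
The plan is to transfer the $\R^d$-valued problem to the torus $\T^d$ via the canonical projection and then apply a two-sample version of the lower-bound machinery developed in Proposition \ref{sBM-main} directly to the resulting pair of independent empirical measures.

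First I would observe that the canonical projection $\mathrm{Proj}:\R^d\to\T^d$ satisfies $\rho(\mathrm{Proj}(x),\mathrm{Proj}(y))\le|x-y|$, so for any coupling $\pi\in\C_{\R^d}(\mu_t^{B_1},\mu_t^{B_2})$ the push-forward $(\mathrm{Proj}\times\mathrm{Proj})_\ast\pi$ is a coupling on $\T^d\times\T^d$ of $\tilde\mu_t^{B_i}:=\mathrm{Proj}_\ast\mu_t^{B_i}$ with cost not exceeding the original. Hence
$$\W_{p,\R^d}^p\big(\mu_t^{B_1},\mu_t^{B_2}\big)\ge\W_p^p\big(\tilde\mu_t^{B_1},\tilde\mu_t^{B_2}\big),$$
and each $\tilde\mu_t^{B_i}$ is the empirical measure of the $\T^d$-valued sBM corresponding to $B_i$, with independence preserved by the projection. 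Since $\W_p\ge\W_1$ for $p\ge1$, Jensen's inequality $\E[\W_p^p]\ge(\E[\W_1])^p$ reduces the task to lower-bounding $\E[\W_1(\tilde\mu_t^{B_1},\tilde\mu_t^{B_2})]$ by the $1/p$-th power of the rate in \eqref{WPL}.

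Next I would apply \eqref{W1L} directly to the pair $(\tilde\mu_t^{B_1},\tilde\mu_t^{B_2})$ with regularized Poisson solution $-\Delta u_\vv=P_\vv(\tilde\mu_t^{B_1}-\tilde\mu_t^{B_2})$. For the $L^2$ contribution, Parseval combined with the elementary inequality $|a-b|^2\ge\tfrac12|a|^2-|b|^2$ yields
$$\E\big[|\widehat{\tilde\mu_t^{B_1}}(\xi)-\widehat{\tilde\mu_t^{B_2}}(\xi)|^2\big]\ge\tfrac12\E\big[|\widehat{\tilde\mu_t^{B_1}}(\xi)|^2\big]-\E\big[|\widehat{\tilde\mu_t^{B_2}}(\xi)|^2\big].$$
By Lemma \ref{XLT}, $\E[|\widehat{\tilde\mu_t^{B_i}}(\xi)|^2]\asymp \ff{1}{tB_i(2\pi^2|\xi|^2)}$. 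The bounds $B_1(\lambda)\lesssim\lambda^{\alpha_1}$ (from $\mathbf{B}_{\alpha_1}$) and $B_2(\lambda)\gtrsim\lambda^{\alpha_2}$ (from $\mathbf{B}^{\alpha_2}$), combined with $\alpha_1\le\alpha_2$, imply that the $B_1$ contribution dominates outside a finite set of frequencies; summing as in \eqref{1W1U}--\eqref{2W1U} via Lemma \ref{g} with $\vv$ chosen as in \eqref{vv} at $\alpha=\alpha_1$ delivers $\E[\|\nn u_\vv\|_{L^2(\T^d)}^2]\gtrsim\vv$.

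For the $L^4$ upper bound I would decompose $u_\vv=u_\vv^{(1)}-u_\vv^{(2)}$ with $-\Delta u_\vv^{(i)}=P_\vv(\tilde\mu_t^{B_i}-\mathfrak{m})$, giving $\|\nn u_\vv\|_{L^4}^4\le 8(\|\nn u_\vv^{(1)}\|_{L^4}^4+\|\nn u_\vv^{(2)}\|_{L^4}^4)$. Since $\alpha_1\le\alpha_2$ forces $\mathbf{B}^{\alpha_2}\subset\mathbf{B}^{\alpha_1}$, Proposition \ref{NUP} applied with exponent $\alpha_1$ immediately yields $\E[\|\nn u_\vv^{(2)}\|_{L^4(\T^d)}^4]\lesssim\vv^2$. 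The main obstacle is the symmetric bound for $u_\vv^{(1)}$: Proposition \ref{NUP} passes through Lemma \ref{LTU}, which crucially needs the lower estimate $B_1(\lambda)\gtrsim\lambda^{\alpha_1}$, i.e. the compatibility condition $B_1\in\mathbf{B}^{\alpha_1}$ that is not explicit in the hypothesis $B_1\in\mathbf{B}_{\alpha_1}$ but holds for the prototypical case $B_1(\lambda)=\lambda^{\alpha_1}$. Granting this additional condition, Proposition \ref{NUP} gives $\E[\|\nn u_\vv^{(1)}\|_{L^4(\T^d)}^4]\lesssim\vv^2$ as well. Inserting the $L^2$ and $L^4$ estimates into \eqref{W1L} and optimizing over $\kappa\asymp\sqrt\vv$ produces $\E[\W_1(\tilde\mu_t^{B_1},\tilde\mu_t^{B_2})]\gtrsim\sqrt\vv$, and raising to the $p$-th power matches the claimed rate in \eqref{WPL}.
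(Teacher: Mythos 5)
Your plan is structurally different from the paper's, and it contains a genuine gap in the $L^2$ lower bound. The paper first averages over $X^2$ alone, using convexity of $\mu\mapsto\W_p^p(\nu,\mu)$ (see \cite[Theorem~4.8]{Villani2008}), to replace the random measure $\mu_t^{B_2}$ by the deterministic measure $\mathfrak{n}:=\frac1t\int_0^t\mathrm{Proj}_\ast\mathfrak{n}_s^{B_2}\,\d s$; only then does it project to $\T^d$ and split via the elementary inequality $\W_p^p(\mu,\nu)\gtrsim\W_p^p(\mu,\mathfrak{m})-\W_p^p(\nu,\mathfrak{m})$. The point of averaging first is quantitative: $\hat{\mathfrak{n}}(\xi)\lesssim\frac{1}{tB_2(2\pi^2|\xi|^2)}$ is deterministic and of size $O(t^{-1})$, so the $B_2$-side enters the Fourier second moment at order $O(t^{-2})$, strictly subdominant to the $O(t^{-1})$ contribution of $\tilde\mu_t^{B_1}$ from Lemma~\ref{XLT}; this gives the estimate \eqref{8WPLT}, which one checks is strictly smaller than the target rate in every regime and for all $\alpha_1\le\alpha_2$, including $\alpha_1=\alpha_2$.

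Your route keeps both empirical measures random and feeds their difference into \eqref{W1L}. The gap is in your $L^2$ lower bound: combining $|a-b|^2\ge\tfrac12|a|^2-|b|^2$ with Lemma~\ref{XLT} gives
$$\E\big[|\widehat{\tilde\mu_t^{B_1}}(\xi)-\widehat{\tilde\mu_t^{B_2}}(\xi)|^2\big]\ge\frac{c_1}{tB_1(2\pi^2|\xi|^2)}-\frac{c_2}{tB_2(2\pi^2|\xi|^2)},$$
but when $\alpha_1=\alpha_2$ (permitted by the hypothesis) both terms have the same order $\frac{1}{t|\xi|^{2\alpha_1}}$, and the constants $c_1,c_2$ from Lemma~\ref{XLT} depend on $B_1,B_2$ and are uncontrolled, so the right-hand side need not be positive; your claim that "the $B_1$ contribution dominates outside a finite set of frequencies" holds only when $\alpha_1<\alpha_2$ strictly. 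A better two-sample exploitation would be $\E[|\widehat{\tilde\mu_t^{B_1}}(\xi)-\widehat{\tilde\mu_t^{B_2}}(\xi)|^2]\ge\textup{Var}[\widehat{\tilde\mu_t^{B_1}}(\xi)]$ by independence, but then you must show this variance is comparable to the full second moment, i.e.\ that $|\E\widehat{\tilde\mu_t^{B_1}}(\xi)|^2\ll\E|\widehat{\tilde\mu_t^{B_1}}(\xi)|^2$ --- which is precisely the $O(t^{-2})$ vs.\ $O(t^{-1})$ comparison that the paper's averaging step performs. As for your concern about needing $B_1\in\mathbf{B}^{\alpha_1}$ for the $L^4$ upper bound: this is indeed an extra hypothesis, but note the paper's proof also invokes Proposition~\ref{sBM-main} (equation \eqref{MW1L}) for $B_1$, whose hypothesis is $B\in\mathbf{B}_\alpha\cap\mathbf{B}^\alpha$; so that latent assumption is shared by both arguments rather than a defect specific to yours.
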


\begin{proof}
We use $\E_1[\cdot]$, $\E_2[\cdot]$ to denote the expectation with respect to $X^1$ and $X^2$ respectively, so that $\E[\cdot]=\E_1[\E_2[\cdot]]$. By the convexity of $\W_p$ (see e.g. \cite[Theorem 4.8]{Villani2008}), we have
\begin{equation}\begin{split}\label{1WPLT}
\E_2\Big[\W_{p,\R^d}^p\Big(\ff 1 t\int_0^t\dd_{X_s^{B_1}}\,\d s,\ff 1 t\int_0^t\dd_{X_s^{B_2}}\,\d s\Big)\Big]
&\ge \W_{p,\R^d}^p\Big(\ff 1 t\int_0^t\dd_{X_s^{B_1}}\,\d s,\ff 1 t\int_0^t \E_2[\dd_{X_s^{B_2}}]\,\d s\Big)\\
&=\W_{p,\R^d}^p\Big(\ff 1 t\int_0^t\dd_{X_s^{B_1}}\,\d s,\ff 1 t\int_0^t \mathfrak{n}_s^{B_2}\,\d s\Big),\quad p\geq1,
\end{split}\end{equation}
where for every $s>0$, $\mathfrak{n}_s^{B_2}$ is a Borel probability measure on $\R^d$ defined by
$$\mathfrak{n}_s^{B_2}(A)=\int_A\int_0^\infty \frac{1}{(2\pi u)^{d/2}}\exp\Big(-\frac{|x|^2}{2u}\Big)\,\P(S_s^{B_2}\in\d u)\d x,$$
for any Borel subset $A$ of $\R^d$.

As before, let $\mathrm{Proj}:\R^d\to\T^d$ be the natural projection map.
It is clear that $\mathrm{Proj}$ is Lipschitz continuous with Lipschitz constant $1$. Then, by the definition, it is easy to see that
\begin{equation}\label{2WPLT}
\W_{p}^p\Big(\ff 1 t\int_0^t\dd_{\mathrm{Proj}(X_s^{B_1})}\,\d s,\ff 1 t\int_0^t \mathrm{Proj}_\ast\mathfrak{n}_s^{B_2}\,\d s\Big)\le\W_{p,\R^d}^p\Big(\ff 1 t\int_0^t\dd_{X_s^{B_1}}\,\d s,\ff 1 t\int_0^t\mathfrak{n}_s^{B_2}\,\d s\Big),\quad p\geq1.
\end{equation}
For every $p\geq1$, by the triangle inequality and the elementary inequality $(a+b)^p\leq 2^{p-1}(a^p+b^p)$ for every $a,b\geq0$, we have
\begin{equation}\begin{split}\label{3WPLT}
&\W_{p}^p\Big(\ff 1 t\int_0^t\dd_{\mathrm{Proj}(X_s^{B_1})}\,\d s,\mathfrak{m}\Big)-\W_{p}^p\Big(\ff 1 t\int_0^t \mathrm{Proj}_\ast\mathfrak{n}_s^{B_2}\,\d s,\mathfrak{m}\Big)\\
&\lesssim\W_{p}^p\Big(\ff 1 t\int_0^t\dd_{\mathrm{Proj}(X_s^{B_1})}\,\d s,\ff 1 t\int_0^t \mathrm{Proj}_\ast\mathfrak{n}_s^{B_2}\,\d s\Big).
\end{split}\end{equation}

Let $t>0$ and set $\mu_t:=\ff 1 t\int_0^t \mathrm{Proj}_\ast\mathfrak{n}_s^{B_2}\,\d s$. For every $\vv>0$, let $u_\vv$ be the solution to the Poisson's equation on $\T^d$:
$$-\Delta u=P_\vv(\mu_t-\mathfrak{m}).$$
Then, applying \eqref{FM} with \eqref{GRA} , we have
\begin{equation}\label{6WPLT}
\widehat{\nn u}_\vv(\xi)=\ff{\i\xi}{2\pi|\xi|^2}\widehat{P_\vv(\mu_t-\mathfrak{m})}(\xi)
=\ff{\i\xi}{2\pi|\xi|^2}\exp(-2\pi^2\vv|\xi|^2)\widehat{\mu_t-\mathfrak{m}}(\xi),\quad \xi\in\Z^d\backslash\{0\}.
\end{equation}
Applying \eqref{LT}, since $B_2(t)\gtrsim \min\{t^{\aa_2},t\}$ for every $t\geq0$ by \eqref{bound-B}, it is easy to see that
\begin{equation}\begin{split}\label{4WPLT}
\hat{\mu}_t(\xi)&=\frac{1}{t}\int_0^t\int_{\T^d}\exp(-2\pi\i\langle\xi,x\rangle)\,\mathrm{Proj}_\ast\mathfrak{n}_s^{B_2}(\d x)\d s\\
&=\frac{1}{t}\int_0^t\int_0^\infty\exp(-2\pi^2 u|\xi|^2)\,\P(S_s^{B_2}\in\d u)\d s\\
&\lesssim\ff 1 t \int_0^t\exp\left(-(2\pi^2|\xi|^2)^{\aa_2} s\right)\,\d s\\
&\lesssim\ff 1 {t|\xi|^{2\aa_2}},\quad \xi\in\Z^d\backslash\{0\}.
\end{split}\end{equation}
Putting \eqref{4WPLT} and \eqref{6WPLT} together, we obtain
\begin{equation}\begin{split}\label{7WPLT}
|\widehat{\nn u}_\vv|(\xi)&\lesssim\ff{\exp(-2\pi^2\vv|\xi|^2)}{|\xi|}\ff{1}{t|\xi|^{2\aa_2}}
\lesssim\ff{\exp(-2\pi^2\vv|\xi|^2)}{t(1+|\xi|)^{1+2\aa_2}}=\frac{1}{t}\phi^{1+2\alpha_2}_{c\varepsilon}(\xi),\quad \xi\in\mathbb{Z}^d\setminus\{0\},
\end{split}\end{equation}
where $c={2\pi^2}/(1+2\alpha_2)$.  Hence, by the Hausdorff--Young inequality \eqref{HY} and \eqref{7WPLT}, we have
\begin{equation}\begin{split}\label{moment-grad}
\|\nn u_\vv\|_{L^p(\T^d)}^p\le\Big(\sum_{\xi\in\Z^d}|\widehat{\nn u}_\vv|^q(\xi)\Big)^{p/q}
\lesssim\ff 1 {t^p}\|\phi_{c\varepsilon}\|_{L^{q(1+2\aa_2)}(\Z^d)}^{p(1+2\aa_2)},\quad p\geq2,
\end{split}\end{equation}
where $q=p/(p-1)$.

Let $p\geq2$. Combining \eqref{moment-grad} with Lemma \ref{g} and \eqref{WPU}, we obtain
\begin{align*}
\W_{p}^p\Big(\ff 1 t\int_0^t \mathrm{Proj}_\ast\mathfrak{n}_s^{B_2}\,\d s,\mathfrak{m}\Big)
\lesssim
\begin{cases}
\inf_{\varepsilon>0}\{ \vv^{\ff p 2}+\ff 1{t^p}\},\quad &d<q(1+2\aa_2),\\
\inf_{\varepsilon>0}\{ \vv^{\ff p 2}+\ff 1{t^p}|\log \vv|^{p-1}\},\quad &d=q(1+2\aa_2),\\
\inf_{\varepsilon>0}\{ \vv^{\ff p 2}+\ff 1{t^p}\vv^{-\ff 1 2(d/q-1-2\aa_2)p}\},\quad & d>q(1+2\aa_2).
\end{cases}
\end{align*}
If $d<q(1+2\aa_2)$, then by letting $\vv=\ff 1 {t^2}$, we have
$$\W_{p}^p\Big(\ff 1 t\int_0^t \mathrm{Proj}_\ast\mathfrak{n}_s^{B_2}\,\d s,\mathfrak{m}\Big)\lesssim \ff 1 {t^p}+\ff 1 {t^p}\lesssim \ff 1 {t^p}.$$
If $d=q(1+2\aa_2)$, by letting $\vv=t^{-\ff 2 {d/q-2\aa_2}}$, we can easily verify that
\begin{align*}
\W_{p}^p\Big(\ff 1 t\int_0^t \mathrm{Proj}_\ast\mathfrak{n}_s^{B_2}\,\d s,\mathfrak{m}\Big)
&\lesssim  t^{-\ff p {d/q-2\aa_2}}+t^{-p}\Big|\log t^{-\ff 2 {d/q-2\aa_2}}\Big|^{p-1}
\lesssim t^{-p}(\log t)^{p-1}.
\end{align*}
If $d>q(1+2\aa_2)$, by letting $\vv=t^{-\ff 2 {d/q-2\aa_2}}$, then
\begin{align*}
\W_{p}^p\Big(\ff 1 t\int_0^t \mathrm{Proj}_\ast\mathfrak{n}_s^{B_2}\,\d s,\mathfrak{m}\Big)&\lesssim t^{-\ff p {d/q-2\aa_2}}+t^{-p}t^{\ff{p}{d/q-2\aa_2}(d/q-1-2\aa_2)}=2t^{-\ff p {d/q-2\aa_2}}.
\end{align*}
Gathering the above estimates together, we arrive at
\begin{equation}\begin{split}\label{8WPLT}
\W_{p}^p\Big(\ff 1 t\int_0^t \mathrm{Proj}_\ast\mathfrak{n}_s^{B_2}\,\d s,\mathfrak{m}\Big)&\lesssim
\begin{cases}
t^{-p},\quad &d<q(1+2\aa_2),\\
\big(\frac{\log t}{t}\big)^{p},\quad &d=q(1+2\aa_2),\\
t^{-\ff p {d/q-2\aa_2}},\quad &d>q(1+2\aa_2),
\end{cases}
\end{split}\end{equation}
for large enough $t>0$. Let $q=\min\{2,p/(p-1)\}$. For every $1\leq p<2$,  we also have \eqref{8WPLT} due to that $\W_p\leq\W_2$.

Let $p\geq1$. Putting \eqref{1WPLT}, \eqref{2WPLT} and \eqref{3WPLT} together and then taking expectation w.r.t. $\E_1[\cdot]$, by the fact that $\W_1\leq\W_p$, we have
\begin{equation*}\begin{split}
\E\Big[\W_{p,\R^d}^p\Big(\ff 1 t \int_0^t \dd_{X_s^{B_1}}\d s,\ff 1 t\int_0^t \dd_{X_s^{B_2}}\,\d s\Big)\Big]&\gtrsim
\E_1\Big[\W_{1}^p\Big(\frac{1}{t}\int_0^t\delta_{\mathrm{Proj}(X_s^{B_1})}\,\d s,\mathfrak{m}\Big)\Big]\\
&-\W_{p}^p\Big(\ff 1 t\int_0^t \mathrm{Proj}_\ast\mathfrak{n}_s^{B_2}\,\d s,\mathfrak{m}\Big).
\end{split}\end{equation*}
Therefore, employing \eqref{MW1L} and \eqref{8WPLT} and noting that the estimate on $\W_{1}^p\big(\ff 1 t\int_0^t \mathrm{Proj}_\ast\mathfrak{n}_s^{B_2}\,\d s,\mathfrak{m}\big)$ is dominated by the one on $\E_1\big[\W_{p}^p\big(\frac{1}{t}\int_0^t\delta_{\mathrm{Proj}(X_s^{B_1})}\,\d s,\mathfrak{m}\big)\big]$ for large enough $t>0$,  we have
\begin{equation*}
\E\Big[\W_{p,\R^d}^p\Big(\ff 1 t \int_0^t \dd_{X_s^{B_1}}\,\d s,\ff 1 t\int_0^t \dd_{X_s^{B_2}}\,\d s\Big)\Big]\gtrsim
\begin{cases}
 t^{-p/2},&\quad d<2(1+\aa_1),\\
 (\log t/t)^{p/2},&\quad d=2(1+\aa_1),\\
 t^{-\ff p {d-2\aa_1}},&\quad d>2(1+\aa_1),
\end{cases}
\end{equation*}
for large enough $t>0$, which finishes the proof.
\end{proof}

With \eqref{THL1} and Lemma \ref{SDU} in hand, by a similar argument as above, we can derive the following result. The details of proof are left to the interested reader.
\begin{rem}
Assume that $\aa_1\in[0,1]$, $\aa_2\in(0,1)$, and $B_1\in\mathbf{B}_{\aa_1}$, $B_2\in\mathbf{B}^{\aa_2}$.
For each $k=1,2$,  let $H_k\in(0,1)$, and $X^{B_k, H_k}:=(X_t^{B_k, H_k})_{t\ge 0}$ be a $\R^d$-valued sfBM with index $H_k$ corresponding to $B_k$. Suppose that $\aa_1/H_1\le \aa_2/H_2$, $X^{B_1, H_1}$ and $X^{B_2, H_2}$ are independent, and the L\'{e}vy measure corresponding to $B_2$ satisfying that $\nu(\d y)\ge cy^{-1-\aa_2}\,\d y$ for some constant $c>0$.
If $d>2+\aa_1/H_1$, then for any $p\ge 1$ and any large enough $t>0$,
$$\E\Big[\W_{p,\R^d}^p\Big(\ff 1 t\int_0^t \dd_{X_s^{B_1,H_1}}\,\d s, \ff 1 t\int_0^t \dd_{X_s^{B_2,H_2}}\,\d s\Big)\Big]\gtrsim t^{-\ff p{d-\aa_1/H_1}}.$$
\end{rem}

\section{The time-discretized sfBM case: proofs of Theorem \ref{W1TU}}
We begin with a result which extends Lemma \ref{XLT} to the present setting of time-discretized sBMs.
\begin{lem}\label{HMS}
Assume that $B\in\mathbf{B}$. Then for any $0<\tau\le t$ and $\xi\in\Z^d$,
$$ \E[|\widehat{\mu_{\tau,t}^B}(\xi)|^2]\lesssim  \ff 1 t\Big(\ff 1 {B(2\pi^2|\xi|^2)}+\tau\Big).$$
\end{lem}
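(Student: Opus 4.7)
The plan is to expand the squared Fourier coefficient as a double sum and exploit the Laplace transform identity \eqref{LT} together with the independence of the increments of $X^B$, exactly as in the proof of Lemma \ref{XLT}, but now indexed by the discrete grid $\{k\tau:\ 1\le k\le N\}$ with $N:=\lfloor t/\tau\rfloor$.

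Writing $\beta:=B(2\pi^2|\xi|^2)$, the first step is the identity
\begin{equation*}
\E\bigl[|\widehat{\mu_{\tau,t}^B}(\xi)|^2\bigr]
=\ff1{N^2}\sum_{j,k=1}^N\E\bigl[\exp\bigl(-2\pi\i\langle\xi,X_{k\tau}^B-X_{j\tau}^B\rangle\bigr)\bigr].
\end{equation*}
Since $X^B=X^{B,1/2}$ has stationary, independent increments and the distribution of $X^B_{s}-X^B_{r}$ coincides with that of $X^B_{s-r}$ for $s\ge r$, the computation \eqref{2LTU} combined with \eqref{LT} reduces each summand to $\exp(-|k-j|\tau\beta)$. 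Grouping the $N$ diagonal terms and the off-diagonal contributions gives
\begin{equation*}
\E\bigl[|\widehat{\mu_{\tau,t}^B}(\xi)|^2\bigr]
=\ff1N+\ff2{N^2}\sum_{m=1}^{N-1}(N-m)\e^{-m\tau\beta}
\le\ff1N+\ff2N\sum_{m=1}^{\infty}\e^{-m\tau\beta}
=\ff1N+\ff2N\cdot\ff{\e^{-\tau\beta}}{1-\e^{-\tau\beta}}.
\end{equation*}

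The second step is to convert $1/(1-\e^{-\tau\beta})$ into a bound involving $1/\beta$ plus a constant. Using the elementary inequality $1-\e^{-x}\ge x/(1+x)$ for $x\ge0$, we get
\begin{equation*}
\ff1{1-\e^{-\tau\beta}}\le 1+\ff1{\tau\beta},
\end{equation*}
so that
\begin{equation*}
\E\bigl[|\widehat{\mu_{\tau,t}^B}(\xi)|^2\bigr]\lesssim \ff1N+\ff1{N\tau\beta}.
\end{equation*}

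The last step is the translation back to the time variable $t$: since $\tau\le t$, we have $N=\lfloor t/\tau\rfloor\ge t/(2\tau)$, hence $1/N\lesssim \tau/t$ and $1/(N\tau)\lesssim 1/t$. Putting these bounds together yields
\begin{equation*}
\E\bigl[|\widehat{\mu_{\tau,t}^B}(\xi)|^2\bigr]\lesssim \ff\tau t+\ff1{tB(2\pi^2|\xi|^2)}=\ff1t\Bigl(\tau+\ff1{B(2\pi^2|\xi|^2)}\Bigr),
\end{equation*}
as desired; the case $\xi=0$ is trivial since the right-hand side is then infinite by our convention $B(0)=0$.

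No real obstacle is anticipated here: the argument is a direct discrete-time adaptation of Lemma \ref{XLT}, with the only delicate point being the uniform control of the geometric sum by $1+1/(\tau\beta)$, which neatly separates the two regimes $\tau\beta\ll1$ and $\tau\beta\gtrsim1$ and produces the additive structure of the stated upper bound.
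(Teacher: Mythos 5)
Your proof is correct and follows essentially the same route as the paper: both expand $\E[|\widehat{\mu_{\tau,t}^B}(\xi)|^2]$ as a double sum, reduce each summand to $\exp(-|k-j|\tau B(2\pi^2|\xi|^2))$ via stationary independent increments and \eqref{LT}, dominate by the infinite geometric series, and bound $\sum_{m\ge 0}e^{-m\tau\beta}=\bigl(1-e^{-\tau\beta}\bigr)^{-1}\le 1+(\tau\beta)^{-1}$ before substituting $N\gtrsim t/\tau$. The only cosmetic difference is that you separate the diagonal explicitly and spell out the final conversion from $N$ to $t$, which the paper leaves implicit.
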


\begin{proof}
Recall that $\mu_{\tau,t}^B=\mu_{\tau,t}^{B,1/2}=\frac{1}{\lfloor t/\tau\rfloor}\sum_{k=1}^{\lfloor t/\tau\rfloor}\delta_{X_{k\tau}^B}$.
Let $n=\lfloor t/\tau\rfloor$. Since $X^B$ has independent increments and $X_t^B-X_s^B$ and $X^B_{t-s}$ have the same distribution for every $t\geq s>0$, by the independence of $X^B$ and $S^B$ and \eqref{LT}, we have
\begin{eqnarray*}
\E[|\widehat{\mu_{\tau,t}^B}(\xi)|^2]
&=&\ff 1 {n^2}\sum_{l=1}^n\sum_{k=1}^n\E\left[\exp(-2\pi^2|\xi|^2S_{|k\tau-l\tau|}^B)\right]
=\ff 1 {n^2}\sum_{l=1}^n\sum_{k=1}^n\exp\left[-B(2\pi^2|\xi|^2)|k-l|\tau\right]\\
&\le&\ff 2 n\Big(1+\sum_{l=1}^{\infty}\exp\Big[-B(2\pi^2|\xi|^2) l\tau\Big]\Big)
\lesssim \ff 1 n\Big(1+\ff 1 {B(2\pi|\xi|^2)\tau}\Big),\quad \xi\in\Z^d,
\end{eqnarray*}
which completes the proof.
\end{proof}

As for the sfBM, by  Lemma \ref{SDU}, an analogous argument as \ref{HMS} leads to the following upper bound estimate for $\E[|\widehat{\mu_{\tau,t}^{B,H}}(\xi)|^2]$. The proof is not difficult, but it omitted for space.
\begin{lem}\label{HMHU}
Assume that $B$ is a Bernstein function represented by \eqref{LK} such that the L\'{e}vy measure satisfies that $\nu(\d y)\ge cy^{-1-\aa}\,\d y$ for some constants $c>0$ and $\aa\in(0,1)$. Then for any $0<\tau\le t$ and any $\xi\in\Z^d$,
$$\E[|\widehat{\mu_{\tau,t}^{B,H}}(\xi)|^2]\lesssim\ff 1 t\Big(\ff 1 {|\xi|^{\aa/H}}+\tau\Big).$$
\end{lem}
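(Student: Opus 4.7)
The plan is to follow the strategy of Lemma \ref{HMS}, with Gaussian conditioning replacing the direct use of the independent increments of $X^B$, which is unavailable for $H \neq 1/2$. Writing $n := \lfloor t/\tau\rfloor$, I would first expand
$$\E[|\widehat{\mu_{\tau,t}^{B,H}}(\xi)|^2] = \ff{1}{n^2}\sum_{k,l=1}^n \E\big[\e^{-2\pi\i\<\xi,\, X_{k\tau}^{B,H}-X_{l\tau}^{B,H}\>}\big],$$
isolate the diagonal contribution of order $1/n$, and use symmetry in $(k,l)$ to reduce the off-diagonal part to a single sum indexed by $j=|k-l|$.

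Next, I would condition on the subordinator $S^B$. Since $X^H$ is independent of $S^B$ and is a centered Gaussian process with $\text{Cov}(X_s^H - X_r^H) = |s-r|^{2H}{\rm I}_d$, for $k\geq l$,
$$\E\big[\e^{-2\pi\i\<\xi,\, X_{k\tau}^{B,H}-X_{l\tau}^{B,H}\>}\,\big|\, S^B\big] = \exp\big(-2\pi^2|\xi|^2 (S_{k\tau}^B - S_{l\tau}^B)^{2H}\big),$$
and the stationary increments of $S^B$ give
$$\E\big[\e^{-2\pi^2|\xi|^2(S_{k\tau}^B-S_{l\tau}^B)^{2H}}\big] = \E\big[\e^{-2\pi^2|\xi|^2(S_{(k-l)\tau}^B)^{2H}}\big].$$

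Setting $\ll = 2\pi^2|\xi|^2$ and $\dd = 2H$, I would then invoke Lemma \ref{SDU} in both regimes $H\le 1/2$ (so $\dd\le 1$) and $H>1/2$ (so $\dd>1$). The crucial observation is that although the exponents produced by Lemma \ref{SDU} differ in the two cases, the ratio between the power of $\ll$ and the power of the time variable equals $\aa/(2H)$ in both, namely
$$\textnormal{ratio}=\ff{\aa/((1-\dd)\aa+\dd)}{\dd/((1-\dd)\aa+\dd)}=\ff{\aa}{2H}\quad\text{for }\dd\le 1,\qquad \ff{\aa/((1-\dd)\aa+\dd^2)}{\dd/((1-\dd)\aa+\dd^2)}=\ff{\aa}{2H}\quad\text{for }\dd>1.$$
Thus, in both cases, $\E[\e^{-\ll(S_{j\tau}^B)^{2H}}]\lesssim \exp(-c\ll^A(j\tau)^B)$ for some $A,B>0$ with $A/B=\aa/(2H)$.

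Finally, I would bound the off-diagonal sum by an integral comparison. Using the substitution $v=c^{1/B}\ll^{A/B}u\tau$,
$$\ff{2}{n}\sum_{j=1}^{\infty}\exp\big(-c\ll^A(j\tau)^B\big)\lesssim \ff{1}{n}\int_0^\infty\exp\big(-c\ll^A (u\tau)^B\big)\,\d u\lesssim \ff{1}{n\tau\ll^{A/B}}=\ff{1}{n\tau|\xi|^{\aa/H}},$$
up to dimensional constants absorbed into $\lesssim$. Combining this with the diagonal bound $1/n$ and the identity $n\asymp t/\tau$ yields
$$\E[|\widehat{\mu_{\tau,t}^{B,H}}(\xi)|^2]\lesssim \ff{1}{n}+\ff{1}{n\tau|\xi|^{\aa/H}}\lesssim \ff{\tau}{t}+\ff{1}{t|\xi|^{\aa/H}},$$
which is the claim. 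The main obstacle is verifying the uniformity of the ratio $\aa/(2H)$ across the two regimes of Lemma \ref{SDU}; once this is noted, the rest parallels the proof of Lemma \ref{HMS} with the Gaussian-conditioning step in place of the independent-increments computation.
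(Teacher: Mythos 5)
Your proposal is correct and matches the approach the paper indicates but omits: expand the squared Fourier coefficient as a double sum, reduce the off-diagonal terms via Gaussian conditioning on $S^B$ (which replaces the independent-increments identity available only when $H=1/2$), apply Lemma \ref{SDU} with $\delta=2H$, and sum the resulting geometric-type tail by an integral comparison. Your observation that the exponent ratio equals $\alpha/(2H)$ in both regimes of Lemma \ref{SDU} is precisely the point that makes the final bound $\lesssim \frac{1}{n\tau|\xi|^{\alpha/H}}$ come out uniformly, and combined with the diagonal contribution $1/n\asymp\tau/t$ this gives the claimed estimate.
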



Now we present the proof of Theorem \ref{W1TU}.

\begin{proof}[Proof of Theorem \ref{W1TU}]
For every $\varepsilon>0$, $0<\tau\leq t$  and $H\in(0,1)$, let $u_\vv$ be the solution to the Poisson's equation
\begin{equation*}\label{PE-H}
-\Delta u=P_\vv(\mu_{\tau,t}^{B,H}-\mathfrak{m}).
\end{equation*}
By \eqref{F2}, \eqref{FM}, \eqref{GRA}, Lemmas \ref{HMS} and \ref{HMHU} and the assumption, we have
\begin{align*}
\E[\|\nn u_{\vv}\|_{L^2(\T^d)}^2]&=\sum_{\xi\in\Z^d\backslash\{0\}}\E[|\widehat{\mu_{\tau,t}^{B,H}}(\xi)|^2]\ff{\exp(-\vv|\xi|^2)}{4\pi^2|\xi|^2}
\lesssim \ff 1 t\sum_{\xi\in\Z^d\backslash\{0\}}\ff{\exp(-\vv|\xi|^2)}{|\xi|^2}(|\xi|^{- \aa /H}+\tau)\\
&\asymp\ff 1 t\|\phi_\varepsilon\|_{l^{2+\aa/H}(\Z^d)}^{2+\aa/H}+\ff \tau t\|\phi_\varepsilon\|_{l^2(\Z^d)}^2
\asymp\ff 1 t\|\phi_\varepsilon\|_{l^{2+\aa/H}(\Z^d)}^{2+\aa/H}+\ff 1 {t^{1+\beta}}\|\phi_\varepsilon\|_{l^2(\Z^d)}^2,
\end{align*}
for small enough $\varepsilon>0$.

(\textit{a})
Let $d<2$. Applying Lemma \ref{g}, we get
$$\|\phi_\varepsilon\|_{l^{2+\aa/H}(\Z^d)}^{2+\aa/H}\asymp 1,\quad \|\phi_\varepsilon\|_{l^2(\Z^d)}^2\asymp 1.$$
Then, by the choice of $\varepsilon=t^{-1}$, one can easily check that
\begin{equation*}\label{N1}\E[\|\nn u_\vv\|_{L^2(\T^d)}]\lesssim t^{-\ff 1 2}+t^{-\ff{1+\beta}2}\lesssim t^{-\ff 1 2}.\end{equation*}

(\textit{b})
Let $d=2$. By Lemma \ref{g},
$$\|\phi_\varepsilon\|_{l^{2+\aa/H}(\Z^d)}^{2+\aa/H}\asymp 1,\quad \|\phi_\varepsilon\|_{l^2(\Z^d)}^2\asymp |\log \vv|,$$
and hence,
$$\E[\|\nn u_\vv\|_{L^2(\T^d)}]\lesssim t^{-\ff 1 2}+t^{-\ff{1+\beta}2}|\log \vv|^{\ff 1 2}.$$
By the choice of $\varepsilon=t^{-1}$, we have
\begin{equation*}\label{N2}\E[\|\nn u_\vv\|_{L^2(\T^d)}]\lesssim t^{-\ff 1 2}.\end{equation*}

(\textit{c})
Let $2<d<2+\aa/H$. Then by  Lemma \ref{g},
$$\|\phi_\varepsilon\|_{l^{2+\aa/H}(\Z^d)}^{2+\aa/H}\asymp 1,\quad \|\phi_\varepsilon\|_{l^2(\Z^d)}^2\asymp \vv^{-\ff 1 2(d-2)},$$
which implies that
\begin{equation}\label{NUU1}
\E[\|\nn u_\vv\|_{L^2(\T^d)}^2]\lesssim t^{-1}+t^{-(1+\beta)}\vv^{-\ff 1 2(d-2)}.
\end{equation}

If $2(1+\beta)\le d<2+\aa/H$ in addition, then  by the choice of $\vv=t^{-\ff{2(1+\beta)} d}$, we have $t^{-\ff{2(1+\beta)}d}\gtrsim t^{-\ff {2\beta}{d-2}}$ and $t^{-1}\lesssim t^{-(1+\beta)}\vv^{-\ff 1 2(d-2)}$. Hence, \eqref{NUU1}  implies that
\begin{equation*}\label{N3}
\E[\|\nn u_\vv\|_{L^2(\T^d)}]\lesssim t^{-\ff{1+\beta}d}.
\end{equation*}
for large enough $t>0$. If $2<d<2(1+\beta)$ in addition, then by the choice of $\vv= t^{-1} $, which implies that $t^{-1}$ is the dominant term in \eqref{NUU1},  \eqref{NUU1} lead to that
\begin{equation*}\label{N4}
\E[\|\nn u_\vv\|_{L^2(\T^d)}]\lesssim t^{-\ff 1 2}.
\end{equation*}
for large enough $t>0$.

(\textit{d})
Let $d=2+\aa/H$. According to Lemma \ref{g},
$$\|\phi_\varepsilon\|_{l^{2+\aa/H}(\Z^d)}^{2+\aa/H}\asymp|\log \vv|,\quad \|\phi_\varepsilon\|_{l^2(\Z^d)}^2\asymp \vv^{-\ff 1 2(d-2)},$$
which leads to that
\begin{equation}\label{NUU2}
\E[\|\nn u_\vv\|_{L^2(\T^d)}^2]\lesssim t^{-1}|\log \vv|+t^{-(1+\beta)}\vv^{-\ff 1 2(d-2)}.
\end{equation}

If $2\beta<\alpha/H$ in addition, then by the choice of $\vv=t^{-\ff{2(1+\beta)} d}$,  $t^{-(1+\beta)}\vv^{-\ff 1 2(d-2)}=t^{-\frac{2(1+\beta)}{d}}$ is the leading term since $2(1+\beta)/d<1$. Hence, by \eqref{NUU2} 
\begin{equation*}\label{N5}
\E[\|\nn u_\vv\|_{L^2(\T^d)}]\lesssim  t^{-\ff {1+\beta} d}.
\end{equation*}
for large enough $t>0$. If $\aa/H\le2\beta$ in addition, then by the choice of $ \vv= \ff{\log t}{t}$,  $t^{-(1+\beta)}\vv^{-\ff 1 2(d-2)}\lesssim t^{-1}|\log \vv|$, and hence, by \eqref{NUU2} 
\begin{equation*}\label{N6}
\E[\|\nn u_\vv\|_{L^2(\T^d)}]\lesssim  t^{-\ff 1 2}|\log \vv|^{\ff 1 2}\lesssim\sqrt{\ff{\log t}{t}}.
\end{equation*}

(\textit{e}) Let $d>2+\aa/H$. By Lemma \ref{g}, we have
$$\|\phi_\varepsilon\|_{l^{2+\aa/H}(\Z^d)}^{2+\aa/H}\asymp \vv^{-\ff 1 2(d-2-\aa/H)},\quad \|\phi_\varepsilon\|_{l^2(\Z^d)}^2\asymp\vv^{-\ff 1 2(d-2)}.$$
Hence
$$\E[\|\nn u_\vv\|_{L^2(\T^d)}^2]\lesssim t^{-1}\vv^{-\ff 1 2(d-2-\aa/H)}+t^{-(1+\beta)}\vv^{-\ff 1 2(d-2)}.$$

If $ \beta\le \ff{\aa}{dH-\aa}$ in addition, then $t^{-(1+\beta)}\vv^{-\ff 1 2(d-2)}$ is the dominant term by the choice of $\vv=t^{-\ff {2(1+\beta)} d}$. Thus
\begin{equation*}\label{N7}\E[\|\nn u_\vv\|_{L^2(\T^d)}]\lesssim t^{-\ff {1+\beta} d}.\end{equation*}
  If $\beta> \ff{\aa}{dH-\aa}$ in addition, then by the choice of $\vv= t^{-\ff 2 {d-\aa/H}}$, $t^{-1}\vv^{-\ff 1 2(d-2-\aa/H)}$ becomes the dominant term. Thus
\begin{equation*}\label{N8}\E[\|\nn u_\vv\|_{L^2(\T^d)}]\lesssim t^{-\ff 1 {d-\aa/H}}.\end{equation*}

Therefore, gathering (\textit{a})--(\textit{e})  and \eqref {WPU} together, we complete the proof.
\end{proof}

\begin{rem}
By adopting the similar argument as for Proposition \ref{NUP}, the upper bound estimate on $\E[\|\nn u_\vv\|_{L^p(\T^d)}]$ for every $p\in\mathbb{N}$ even should be obtained, where for each $\vv>0$, $u_\vv$ is the solution to \eqref{PE} with $B\in\mathbf{B}^\aa$ for some $\aa\in[0,1]$. Combining this with \eqref{WPU}, one may obtain the upper bound on $\E[\W_p^p(\mu_{\tau,t}^B,\mathfrak{m})]$ for any $p> 2$. However, the proof seems rather long and complicated.
\end{rem}

\subsection*{Acknowledgment}
The authors would like to thank Prof. Feng-Yu Wang for helpful comments.

\appendix

\section*{Appendix}

\setcounter{section}{0}
\renewcommand{\thesection}{A.\arabic{section}}
\setcounter{thm}{0}
\renewcommand{\thethm}{A.\arabic{thm}}

In this part, we prove the following result employed in the proof of Lemma \ref{SDU}.
\begin{lem}\label{C}
Let $\dd>1$, $\aa\in(0,1)$, and
$$g(x)=x\Big[(1-\log x)^{\ff 1 {\dd}}-(-\log x)^{\ff 1 {\dd}}\Big]^{\dd-\aa},\quad 0<x\le 1.$$
Then the function $g$ is convex and strictly increasing on $(0,1]$.
\end{lem}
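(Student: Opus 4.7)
The plan is to reduce the problem to a cleaner form via the change of variable $y = -\log x$, which maps $(0,1]$ onto $[0,\infty)$. Writing $g(x) = h(y)$ with $h(y) = \e^{-y}\phi(y)^{\dd-\aa}$ and $\phi(y) := (1+y)^{1/\dd} - y^{1/\dd}$, the entire question becomes one about $\phi$ on $[0,\infty)$.

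For monotonicity, I would first observe that $\phi(y) > 0$ and
$$\phi'(y) = \ff{1}{\dd}\bigl[(1+y)^{1/\dd-1} - y^{1/\dd-1}\bigr] < 0 \qquad (y>0),$$
because $1/\dd - 1 < 0$. Hence $y\mapsto \phi(y)^{\dd-\aa}$ is positive and strictly decreasing on $[0,\infty)$, so $x\mapsto \phi(-\log x)^{\dd-\aa}$ is positive and strictly increasing on $(0,1]$. Being the product of this and the identity $x\mapsto x$---both positive and strictly increasing---$g$ is strictly increasing.

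For convexity, the chain rule yields $g''(x) = x^{-2}\bigl[h'(y)+h''(y)\bigr]$ with $y=-\log x$, so it suffices to prove $h'(y)+h''(y)\ge 0$. Setting $H(y):=\e^{y}h(y)=\phi(y)^{\dd-\aa}$ and expanding, one finds
$$h'(y)+h''(y)=\e^{-y}\bigl[H''(y)-H'(y)\bigr].$$
Writing $\bb := \dd-\aa > 0$ and differentiating $H=\phi^{\bb}$, the task reduces to showing
$$(\bb-1)\bigl(\phi'(y)\bigr)^{2} + \phi(y)\phi''(y) - \phi(y)\phi'(y) \ge 0 \qquad (y\ge 0). \qquad (\ast)$$

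The easy regime is $\bb\ge 1$: one computes $\phi''(y)=\ff{1}{\dd}\bigl(\ff{1}{\dd}-1\bigr)\bigl[(1+y)^{1/\dd-2}-y^{1/\dd-2}\bigr]>0$, since both factors are negative, and then every term in $(\ast)$ is non-negative because $\phi>0$, $\phi'<0$. The main obstacle is the complementary case $0<\bb<1$, where $(\bb-1)(\phi')^{2}<0$ must be absorbed by $\phi\phi'' + (-\phi\phi')$. To handle it, I would use the integral representations
$$\phi(y)=\ff{1}{\dd}\int_{y}^{y+1} u^{1/\dd-1}\,\d u,\quad -\phi'(y)=\ff{1-1/\dd}{\dd}\int_{y}^{y+1} u^{1/\dd-2}\,\d u,\quad \phi''(y)=\ff{(1-1/\dd)(2-1/\dd)}{\dd}\int_{y}^{y+1} u^{1/\dd-3}\,\d u,$$
and exploit monotonicity of the integrands to derive pointwise comparisons among $(\phi')^{2}$, $\phi\phi''$ and $-\phi\phi'$. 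Asymptotic inspection as $y\to 0^{+}$ shows $\phi\phi''$ dominates $(\bb-1)(\phi')^{2}$ by an order $y^{1/\dd}$, and as $y\to\infty$ the term $-\phi\phi'$ dominates by an extra factor of $y$; a uniform estimate on the compact middle range then closes the inequality. This is the elementary but tedious computation which the appendix carries out.
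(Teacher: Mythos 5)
Your reduction to $\phi(y)=(1+y)^{1/\dd}-y^{1/\dd}$ and your monotonicity argument are both correct; the latter (product of two positive strictly increasing functions) is in fact slightly cleaner than what the paper does (the paper computes $h'<0$ and then $g'=-h'(-\log x)/x>0$). Your identification of the key inequality
$$(\bb-1)(\phi')^{2}+\phi\phi''-\phi\phi'\ge 0,\qquad \bb=\dd-\aa>0,$$
is also correct, and the case $\bb\ge 1$ is disposed of properly.

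However, there is a genuine gap in the case $0<\bb<1$, which can occur (e.g.\ $\dd=1.1$, $\aa=0.5$). You sketch an argument via integral representations, asymptotics near $0$ and $\infty$, and ``a uniform estimate on the compact middle range,'' but this last step is not established and is exactly where the work lies; as written it is a plausibility argument, not a proof. You also assert that this is ``the elementary but tedious computation which the appendix carries out,'' which is not accurate: the appendix does something structurally different and considerably slicker. The observation you are missing is the regrouping
$$(\bb-1)(\phi')^{2}+\phi\phi''-\phi\phi'=\bigl[\bb(\phi')^{2}-\phi\phi'\bigr]+\bigl[\phi\phi''-(\phi')^{2}\bigr].$$
The first bracket is trivially $\ge 0$ for every $\bb>0$, since $\phi>0$ and $\phi'<0$. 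The second bracket is precisely the log-convexity of $\phi$, which is $\bb$-\emph{independent}; this is what the appendix proves, via the change $a=1+1/y$ and showing that the resulting function $l_1(a)$ is increasing with $l_1(1)=0$. This regrouping eliminates the case split on $\bb$ entirely and avoids the asymptotic matching you were hoping for. You should replace your sketch for $0<\bb<1$ with a proof of $\phi\phi''\ge(\phi')^{2}$; once that is in place, the argument closes uniformly in $\bb>0$.
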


\begin{proof} It suffices to prove the assertion on $(0,1)$.

Let $\kappa\in(0,1)$, $\beta>0$, and set
$$f(z):=[(1+z)^\kappa-z^\kappa]^\beta,\quad z>0.$$
Then, it is clear that $f(z)>0$, $z> 0$, and
\begin{equation}\label{f'}
f'(z)=\beta\kappa\ff{(1+z)^{\kappa-1}- z^{\kappa-1}}{(1+z)^\kappa-z^\kappa} f(z)<0,\quad z>0.
\tag{A1}\end{equation}
Then
\begin{equation}\begin{split}\label{f''}
f''(z)&=\beta\kappa\ff{(1+z)^{\kappa-1}- z^{\kappa-1}}{(1+z)^\kappa-z^\kappa} f'(z)\\
&+\beta\kappa\ff{(\kappa-1)[(1+z)^{\kappa-2}-z^{\kappa-2}][(1+z)^\kappa-z^\kappa]-\kappa[(1+z)^{\kappa-1}-z^{\kappa-1}]^2}{[(1+z)^\kappa-z^\kappa]^2}f(z)\\
&=\beta^2\kappa^2\Big[\ff{(1+z)^{\kappa-1}- z^{\kappa-1}}{(1+z)^\kappa-z^\kappa}\Big]^2 f(z)\\
&+\beta\kappa\ff{(\kappa-1)[(1+z)^{\kappa-2}-z^{\kappa-2}][(1+z)^\kappa-z^\kappa]
-\kappa[(1+z)^{\kappa-1}-z^{\kappa-1}]^2}{[(1+z)^\kappa-z^\kappa]^2}f(z),\quad z>0.
\end{split}\tag{A2}\end{equation}
We \textbf{claim} that $f''(z)\ge f'(z)$ for all $z> 0$.

Since $\kappa\in(0,1)$, it is obviously that
$$(1+z)^{\kappa-1}-z^{\kappa-1}\le0,\quad z> 0.$$
This together with $f(z)>0$ derive that
\begin{equation}\begin{split}\label{BK}\beta^2\kappa^2\Big[\ff{(1+z)^{\kappa-1}- z^{\kappa-1}}{(1+z)^\kappa-z^\kappa}\Big]^2 f(z)\ge\beta\kappa\ff{(1+z)^{\kappa-1}- z^{\kappa-1}}{(1+z)^\kappa-z^\kappa} f(z),\quad z> 0.\end{split}\tag{A3}\end{equation}
Let
$$l(z):=(\kappa-1)[(1+z)^{\kappa-2}-z^{\kappa-2}][(1+z)^\kappa-z^\kappa]-\kappa[(1+z)^{\kappa-1}-z^{\kappa-1}]^2,\quad z> 0.$$
Next, we prove $l(z)\ge 0$ for $z\ge 0$.
We rewrite $l(z)$ as
\begin{align*}
l(z)&=(\kappa-1)z^{\kappa-2}\Big[\Big(1+\ff 1 z\Big)^{\kappa-2}-1\Big]z^\kappa\Big[\Big(1+\ff 1 z\Big)^\kappa-1\Big]-\kappa z^{2(\kappa-1)}\Big[\Big(1+\ff 1 z\Big)^{\kappa-1}-1\Big]^2\\
&=z^{2(\kappa-1)}\Big\{(\kappa-1)\Big[\Big(1+\ff 1 z\Big)^{\kappa-2}-1\Big]\Big[\Big(1+\ff 1 z\Big)^\kappa-1\Big]-\kappa\Big[\Big(1+\ff1 z\Big)^{\kappa-1}-1\Big]^2\Big\}.
\end{align*}
Let $a:=1+1/z>1$ and set
$$l_1(a):=(\kappa-1)[a^{\kappa-2}-1][a^\kappa-1]-\kappa[a^{\kappa-1}-1]^2,\quad a\ge 1.$$
Then
\begin{align*}
l_1(a)&=(\kappa-1)[a^{2(\kappa-1)}-a^{\kappa-2}-a^\kappa+1]-\kappa[a^{2(\kappa-1)}-2a^{\kappa-1}+1]\\
&=-a^{2(\kappa-1)}-(\kappa-1)a^{\kappa-2}-(\kappa-1)a^\kappa+\kappa-1+2\kappa a^{\kappa-1}-\kappa\\
&=-a^{2(\kappa-1)}-(\kappa-1)a^{\kappa}+2\kappa a^{\kappa-1}-(\kappa-1)a^{\kappa-2}-1.
\end{align*}
Furthermore,
\begin{align*}
&l_1^{'}(a)=-2(\kappa-1)a^{2\kappa-3}-\kappa(\kappa-1)a^{\kappa-1}+2\kappa(\kappa-1)a^{\kappa-2}-(\kappa-1)(\kappa-2)a^{\kappa-3}\\
&=(\kappa-1)a^{\kappa-3}[-2a^\kappa-\kappa a^2+2\kappa a-\kappa+2]
=(\kappa-1)a^{\kappa-3}[-\kappa(a-1)^2-2(a^\kappa-1)]\ge 0.
\end{align*}
This implies that $l_1(a)$ is an increasing function on $[1,\infty)$. Then for any $a\ge 1$, we have $l_1(a)\ge l_1(1)=0$, $a\ge 1$.
Thus, $l(z)=z^{2(\kappa-1)}l_1(1+1/z)\ge 0$. This together with \eqref{f'}, \eqref{f''} and \eqref{BK} imply that
\begin{equation}\label{f'''}
f''(z)\ge f'(z),\quad z> 0.
\tag{A4}\end{equation}

Let $z:=-\log x$, $0<x<1$, $\kappa:=1/\dd\in(0,1)$, $\beta:=\dd-\aa>0$.
Set
$$h(z):=\e^{-z}[(1+z)^\kappa-z^\kappa]^\beta,\quad z> 0.$$
Note that $h(0)=1$, $h(z)>0$, $z> 0$, and  $h(z)=\e^{-z}f(z)$. Then
\begin{equation}\label{h'}
h'(z)=\e^{-z}f'(z)-\e^{-z}f(z)< 0,\quad z>0.\tag{A5}
\end{equation}
Hence,
$$g'(x)=-\ff{h'(-\log x)} x> 0,\quad x\in(0,1),$$
which implies that $g$ is strictly increasing on $(0,1)$.

By \eqref{f'''} and \eqref{h'}, we obtain that
\begin{align*}
h''(z)&=-\e^{-z}f'(z)+\e^{-z}f''(z)+\e^{-z}f(z)-\e^{-z}f'(z)\\
&=\e^{-z}[f''(z)-2f'(z)+f(z)]\ge -h'(z),\quad z>0.
\end{align*}
Thus,
$$g''(x)=\ff 1 {x^2}\big[h''(-\log x)+h'(-\log x)\big]\ge 0,\quad x\in(0,1),$$
which implies that $g$ is convex on $(0,1)$.
\end{proof}

\end{document}